\newtheorem{theorem}{Theorem}%[section]
\newtheorem*{theorem*}{Theorem}
\newtheorem{lemma}[theorem]{Lemma}
\newtheorem*{lemma*}{Lemma}
\newtheorem{corollary}[theorem]{Corollary}
\newtheorem{assumption}{Assumption}
\newtheorem*{definition*}{Definition}
\newtheorem{remark}{Remark}
\tikzstyle{bus} = [circle, minimum height=0.5cm, text centered, draw=black, fill=white]
\tikzstyle{generator} = [rectangle, minimum height=0.5cm, text centered, draw=black, fill=black!10]
\tikzstyle{load} = [rectangle, rounded corners, minimum height=0.5cm, text centered, draw=black, fill=black!0]
\tikzstyle{tie} = [-, line width=1mm]
\tikzstyle{line} = [-, thick]
\tikzstyle{candidate} = [dashed, thick, black!50]
\newcommand{\reals}{\mathbf{R}}
\newcommand{\diag}{\mathbf{diag}}
\newcommand{\ones}{\mathbf{1}}
\newcommand{\proj}{\mathbf{proj}}
\newcommand{\envelope}{\mathbf{env}}
\newcommand{\op}{\mathrm{op}}
\newcommand{\discrete}{\textrm{int}}
\newcommand{\cont}{\textrm{cont}}
\newcommand{\iter}{{(i)}}
\newcommand{\iterplus}{{(i+1)}}
\pgfplotsset{compat=1.18}
\newcounter{problem}
\newcommand{\problemtag}{\stepcounter{problem}\tag{P\theproblem}}
\title{Gradient Methods for Scalable Multi-value Electricity Network Expansion Planning}
\author{
Anthony Degleris
\and
Abbas El Gamal
\and
Ram Rajagopal
}
\date{\today}
\begin{document}

\maketitle

\begin{abstract}
We consider \textit{multi-value expansion planning} (MEP), a general bilevel optimization model in which a planner optimizes arbitrary functions of the dispatch outcome in the presence of a partially controllable, competitive electricity market.
The MEP problem can be used to jointly plan various grid assets, such as transmission, generation, and battery storage capacities;
examples include identifying grid investments that minimize emissions in the absence of a carbon tax, maximizing the profit of a portfolio of renewable investments and long-term energy contracts, or reducing price inequities between different grid stakeholders.
The MEP problem, however, is in general nonconvex, making it difficult to solve exactly for large real-world systems.
Therefore, we propose a fast stochastic implicit gradient-based heuristic method that scales well to large networks with many scenarios.
We use a strong duality reformulation and the McCormick envelope to provide a lower bound on the performance of our algorithm via convex relaxation.
We test the performance of our method on a large model of the U.S.\ Western Interconnect and demonstrate that it scales linearly with network size and number of scenarios and can be efficiently parallelized on large machines.
We find that for medium-sized 16 hour cases, gradient descent on average finds a 5.3x lower objective value in 16.5x less time compared to a traditional reformulation-based approach solved with an interior point method.
We conclude with a large example in which we jointly plan transmission, generation, and storage for a 768 hour case on 100 node system, showing that emissions penalization leads to additional 40.0\% reduction in carbon intensity at an additional cost of \$17.1/MWh.
\end{abstract}

% We introduce a generalization of classical power system expansion planning problems called \textit{multi-value expansion planning} (MEP), in which the planner can optimize arbitrary functions of the dispatch outcome in the presence of a uncontrolled, competitive electricity market.

\section{Introduction}

Expanding the electric power system is essential to future grid electrification and greenhouse gas curtailment~\cite{Larson2021-xg, Joskow2020-ua, Brown2021-ks, Brown2018-yh}.
However, there are numerous barriers inherent to developing new grid assets, including high upfront capital costs, long and uncertain development timelines, and extensive regulatory restrictions~\cite{Joskow2021-zo}. 
There are also significant challenges associated with value assessment and alignment.
First, neglecting benefits outside of economic cost and reliability may under-estimate the value of a new transmission, generation, or storage project.
Second, various utilities, private organizations, and government agencies generally participate in planning grid expansions, all with competing values and objectives.

To address these challenges, modern expansion planning studies attempt to assess a range of benefits outside of cost and reliability, such as greenhouse gas emissions reductions or increased competition among generating assets.
For example, the California Independent System Operator (ISO) has developed the Transmission Economic Assessment Methodology (TEAM) to holistically evaluate transmission projects by assessing economic benefits separately for different market participants, accounting for downstream impacts on market prices and generator competition, and measuring environmental impacts~\cite{Awad2010-bt}.

Methodologies such as TEAM are effective tools for \textit{evaluating} new grid investments in a holistic, market-aware framework.
In this paper, we focus on optimization-based expansion planning models, a complementary approach for \textit{identifying} potential grid investments from a large set of candidate solutions.
Specifically, we study \textit{multi-value expansion planning} (MEP), a general optimization-based framework for expansion planning problems that allows the planner to optimize arbitrary functions of the dispatch outcome in an (partially controllable) competitive electricity market.
MEP supports jointly planning transmission, generation, and storage assets in either a centralized environment, e.g., a vertically integrated system, or as part of a decentralized market with multiple agents optimizing distinct objectives. 

We formulate multi-value expansion planning as a bilevel optimization problem, in which the upper-level problem is the planner's investment decision and the lower-level problem is market clearing process.
We give several examples of MEP problems, including minimizing carbon emissions in lieu of imposing a carbon tax or maximizing the profit of a portfolio of renewable investments and long-term energy contracts.
Using strong duality~\cite{Garces2009-xp, Zare2019-fv}, we reformulate the MEP problem as a single-level optimization problem with bilinear equality constraints.
When the dispatch costs are convex quadratic or linear, the resulting problem is a nonconvex quadratic program, which is NP-hard in general.
Therefore, inspired by its success in machine learning~\cite{LeCun2015-qc}, we propose applying gradient descent directly to the \textit{implicit form} of multi-value expansion planning and show that it converges to local stationary point in polynomial time.
We also use a simple convex relaxation of the MEP problem to bound the performance of our algorithm and generate good initial candidate solutions.
We show that our approach scales well to large problems with many scenarios and outperforms applying interior point methods to the reformulated problem.
Finally, we solve MEP problems on a large model of the U.S.\ Western Interconnect, demonstrating that gradient descent can find good solutions to realistically-sized grid planning problems.

\section{Literature review}
\label{sec:literature}

% Expansion planning is the problem of identifying optimal long-term upgrades and expansions to a resource system, e.g., electricity, water, or gas networks.
In this work, we focus on expansion planning for bulk power systems and consider resources such as generation, transmission, and storage.
Several comprehensive reviews are available on electricity grid expansion planning, usually specific to an individual resource; see, for example,~\cite{Koltsaklis2018-wx, Mahdavi2019-ev, Sheibani2018-aw}.
Expansion planning studies classically model the system as vertically integrated and minimize total system cost.
Recently, however, numerous studies directly model market behavior using hierarchical equilibrium models; see~\cite{Gonzalez-Romero2020-zp} and references therein.
Here, we adopt the latter market-based approach.
We also limit the scope of our study to single investment period expansion planning with scenario-based uncertainty, although much of our work is straightforward to extend to more general settings.

% \paragraph{Expansion planning.}
% Power system expansion planning problems have been studied in depth over the past several decades~\cite{Pereira1985-sn, Romero1994-si, Romero2002-zp, Qi2015-hw, Majidi-Qadikolai2018-hb, Neumann2019-mj}.
% The classical approach to expansion planning optimizes investment decisions and operation decisions jointly to minimize total system cost~\cite{Pereira1985-sn, Romero2002-zp};
% this most closely models the case of a vertically integrated utility with full centralized control of the power system.
% Generally, expansion planning models specialize to a particular resource type, such as transmission~\cite{Majidi-Qadikolai2018-hb, Neumann2019-mj}, generation~\cite{Pineda2016-cd, Baringo2011-xy}, or storage~\cite{Xu2018-jr}, although some models consider multiple resources simultaneously~\cite{Bent2011-dq, Asgharian2017-uj}.
% Generation and storage problems are usually (mixed-integer) convex and straightforward to solve with off-the-shelf methods~\cite{Alguacil2003-hl}.
% Problems involving transmission expansion have bilinear constraints (assuming a DC-linearized power flow model), but can be relaxed to a convex program using standard techniques~\cite{Alguacil2003-hl, Borraz-Sanchez2016-up}; the relaxations are often exact for integer investments.
% Finally, many expansion planning models have been extended to incorporate uncertainty and robustness~\cite{Zou2018-vz, Garcia-Bertrand2017-hq}.

\paragraph{Bilevel (market-based) expansion planning.}
Deregulated electricity markets make expansion planning significantly more complex because different agents have distinct, competing interests.
In this setting, the resulting expansion planning problem is formulated as bilevel optimization problem or a Stackelberg game~\cite{Sauma2006-ky, Gonzalez-Romero2020-zp}, which can be cast as a mathematical program with equilibrium constraints (MPEC).
Most bilevel expansion planning problems fall into one of two classes.
The first are \textit{proactive} expansion planning problems which maximize social welfare while anticipating strategic (profit-maximizing) generator investment and bidding behavior~\cite{Sauma2006-ky, Pozo2013-gr, Pozo2013-iz, Pozo2017-kz}.
Under perfect competition, however, no generation entity has sufficient market power to affect prices and the Stackelberg equilibrium can be computed as the result of a (standard) single-level planning problem~\cite{Garces2009-xp, Munoz2012-ge}.
The second class of problems is \textit{profit maximization problems} which consider a single or coalition of market participants choosing expansion decisions to maximize individual profit, as opposed to overall welfare~\cite{Baringo2011-xy, Maurovich-Horvat2015-qu, Akbari2016-ja, Xu2018-jr}.
Although the same reduction to single-level planning under perfect competition still applies, it is generally less relevant since considerations about competition are essential to pricing for an individual firm.
Finally, a recent work~\cite{Degleris2021-qg} considers a system operator expanding transmission and storage to reduce greenhouse gas emissions in a market environment without a carbon tax.

One contribution of our work is to provide a unified framework for solving a large class of bilevel expansion planning problems.
In particular, the implementation and performance of our gradient algorithms generally do not change significantly across a broad range of bilevel expansion planning problems, regardless of their specific problem details.

\paragraph{Solution algorithms.}
Generally, the standard approach for solving a bilevel expansion planning problem is to reduce it to a nonconvex single-level problem, then apply standard techniques for solving nonconvex programs.
There are two common approaches for this:
to write the equilibrium conditions for the lower level problem explicitly using the Karush-Kuhn-Tucker (KKT) conditions~\cite{Pineda2018-ry, Constante-Flores2022-hm}, or to use strong duality to reformulate the optimality conditions as an inequality constraint~\cite{Garces2009-xp, Garcia-Herreros2016-vk, Zhang2017-wn}.
Both approaches introduce several bilinear terms and make the resulting problem nonconvex; see~\cite{Zare2019-fv} for a comparison between the two.
In this work, we use the strong duality-based approach to derive our convex relaxation.

Our main solution algorithm, inspired by similar work on bilevel optimization in machine learning~\cite{Pedregosa2016-rr, Lorraine2019-ja, Yang2022-ep}, is to apply gradient descent \textit{directly} to the original implicit form of the bilevel problem.
This requires differentiating \textit{through} an optimization problem, which can be performed efficiently for a large class of problems using \textit{implicit differentiation}~\cite{Dontchev2014-ck, Lorraine2019-ja}.
The main advantages of our approach are that it scales well to high-dimensional and stochastic problems with many scenarios~\cite{Bubeck2015-hz}, can be easily warm started if the problem is perturbed, and is simple to implement.

\section{Multi-value expansion planning}
\label{sec:problem}

In this section, we first introduce the \textit{dispatch model}, which is used to make real-time operational decisions given long-term planning decisions, and describe classical expansion planning.
We then formulate multi-value expansion planning and show how classical planning can be viewed as a special case.
Finally, we show how to extend multi-value expansion planning to uncertain environments.

\subsection{Dispatch model}
\label{sec:dispatch}

Consider an electricity network characterized by long-term physical parameter vector $\eta \in \mathcal H \subset \reals^K$, which we refer to as the \textit{network parameters} or \textit{network investments}, which may include the values of any long-term investment decisions.
The network parameters include, for example, transmission line capacities and susceptances, generator capacities, and battery capacities.
We refer to the set $\mathcal H \subset \reals^M$ as the set of allowable network investments, which we assume to be compact.
The \textit{dispatch model} is the problem of choosing real-time operational decisions to minimize the system cost,
\begin{equation} \label{eq:dispatch}
\begin{array}{ll}
    \textrm{minimize} \quad & c(x) \\[0.25em]
    \textrm{subject to}
    & A(\eta) x \leq b(\eta),
\end{array}
\end{equation}
where the variable is $x \in \reals^N$, which specifies all short-term operation decisions, such as the power output of each generator.
The cost function $c : \reals^N \rightarrow \reals$ specifies the cost of operating the system, e.g., the total cost of producing power.
The functions $A : \reals^K \rightarrow \reals^{M \times N}$ and $b : \reals^K \rightarrow \reals^M$ describe the $M$ affine inequality constraints of the system as a function of the network parameters.
These constraints may include, for example, limits on generation and transmission capacity.
For ease of notation, we only consider problems with inequality constraints;
however, all our technical results should be straightforward to extend to dispatch models with equality constraints as well.
Throughout the text, we may refer to the \textit{operator}, the entity that solves the dispatch model and sets $x$.
We will make the following assumptions about the dispatch model.

\begin{assumption}[Strict feasibility, i.e., Slater's condition]
\label{assume:feasible}
    For all $\eta \in \mathcal H$, there exists $x \in \reals^N$ such that $A(\eta) x < b(\eta)$.
\end{assumption}

\begin{assumption}[Differentiable strongly convex costs]
\label{assume:convex-diff}
    The cost function $c$ is strongly convex and twice differentiable with Lipschitz continuous second derivatives.
\end{assumption}

\paragraph{Dispatch map.}
The first two assumptions imply that for all $\eta \in \reals^K$, problem~\eqref{eq:dispatch} is convex and has a unique solution.
Let $x^*(\eta) : \reals^K \rightarrow \reals^N$ denote the optimal choice of $x$ in~\eqref{eq:dispatch} given network parameters $\eta$.
The function $x^*$ is well-defined, since~\eqref{eq:dispatch} has a unique solution.
We call the function $x^*$ the \textit{dispatch map}, since it maps the network parameters to the solution of the dispatch model.

\paragraph{Dual problem.}
The dual problem of~\eqref{eq:dispatch} is
\begin{equation} \label{eq:dual-dispatch}
\begin{array}{ll}
    \textrm{maximize} \quad & u(\lambda, \eta) \\[0.25em]
    \textrm{subject to}
    & \lambda \geq 0,
\end{array}
\end{equation}
where the variable is $\lambda \in \reals^M$, and $u(\lambda, \eta) = -\lambda^T b(\eta) - c^\star(-A(\eta)^T \lambda)$.
Here the function $c^\star(y) = - \inf_x \left( c(x) - y^T x \right)$ is the convex conjugate of $c$, which is concave and, in many cases, has a simple analytical expression~\cite{Boyd2004-tx}.
The variable $\lambda$ is the dual variable of the constraint $A(\eta) x \leq b(\eta)$ in~\eqref{eq:dispatch}.
We make two additional assumptions that guarantee dual uniqueness.

\begin{assumption}[Non-redundant constraint]
\label{assume:non-redundant-constraints}
    Denote $\tilde a_1(\eta), \ldots, \tilde a_M(\eta) \in \reals^n$ the rows of $A(\eta)$ and let $m_1(\eta), \ldots, m_p(\eta) \in \{1, \ldots, M\}$ be the indices of binding constraints at optimality, i.e., the indices $m$ such that $\tilde a_m(\eta)^T x^*(\eta) = b(\eta)$.
    Then, for all $\eta \in \mathcal H$, the vectors $\tilde a_{m_1(\eta)}(\eta), \ldots, \tilde a_{m_p(\eta)}(\eta)$ are linearly independent.
\end{assumption}

\begin{assumption}[Strict complementarity]
\label{assume:strict-complement}
    Denote $\mathcal B(\eta) = \{ m_1(\eta), \ldots, m_p(\eta) \}$ the indices of the binding constraints at optimality, as before and let $\lambda^*$ be a solution to~\eqref{eq:dual-dispatch}.
    Then, for all $\eta \in \mathcal H$, $\lambda_m^* > 0$ for $m \in \mathcal B(\eta)$.
\end{assumption}

Assumptions~\ref{assume:convex-diff},~\ref{assume:non-redundant-constraints}, and~\ref{assume:strict-complement} together imply that the solution to~\eqref{eq:dual-dispatch} exists uniquely for all $\eta \in \reals^K$; see Appendix~\ref{appendix:unique-dual}.
We call~\eqref{eq:dual-dispatch} the \textit{dual dispatch problem} and define $\lambda^*(\eta) : \reals^K \rightarrow \reals^N$, which we call the \textit{dual dispatch map}, the optimal choice of $\lambda$ in~\eqref{eq:dual-dispatch} as a function of $\eta$.
We make two final assumptions to simplify several technical results.

\begin{assumption}[Bounded domain]
\label{assume:bounded-domain}
    There exist convex, compact sets $\mathcal X \subset \reals^N$ and $\Lambda \subset \reals^M$ such that $x^*(\eta) \in \mathcal X$ and $\lambda^*(\eta) \in \Lambda$ for all $\eta \in \mathcal H$.
\end{assumption}

\begin{assumption}[Affine coefficients]
\label{assume:affine-param}
    The functions $A$ and $b$ are affine (and hence differentiable) in $\eta$.
\end{assumption}

\paragraph{Discussion of assumptions.}
Assumption~\ref{assume:feasible} says that for every allowable network investment, the dispatch model is strictly feasible.
If we assume load curtailment is a feasible, albeit expensive, dispatch outcome, then we can generally guarantee feasibility by allowing the `zero power' solution, i.e., no power is produced or consumed by any device in the network.
Given feasibility, we can also guarantee strict feasibility by perturbing the right hand side of the inequality constraints by a small constant.
Assumptions~\ref{assume:non-redundant-constraints},~\ref{assume:strict-complement}, and~\ref{assume:bounded-domain} can also generally be guaranteed through careful problem construction and small perturbations.
Perhaps most restrictive is the strong convexity requirement in Assumption~\ref{assume:convex-diff}.
However, this too can be guaranteed through problem perturbation, e.g., adding a small quadratic penalty to each variable.
Moreover, empirical results suggest that this assumption can be relaxed, so long as the resulting problem has a unique solution for every $\eta \in \mathcal H$.
Finally, Assumption~\ref{assume:affine-param} is mainly intended to simplify various technical results and can be relaxed to just twice differentiability if needed.

\subsection{Expansion planning}

\textit{Expansion planning} is the problem of choosing both long-term investments and real-time operation decisions to minimize overall system cost,
\begin{equation} \label{eq:cep}
\begin{array}{ll}
    \textrm{minimize} \quad & \gamma^T \eta + c(x) \\[0.25em]
    \textrm{subject to}
    & A(\eta) x \leq b(\eta) \\[0.25em]
    & \eta \in \mathcal H,
\end{array}
\end{equation}
where the variables are $x \in \reals^N$ and $\eta \in \reals^K$.
The vector $\gamma \in \reals^K$ gives the cost of the network investment.
Two common types of constraints on network investments include continuous box constraints, where $\mathcal H^{\cont} = \{ \eta : \eta^{\min} \leq \eta \leq \eta^{\max} \}$ for $\eta^{\min}, \eta^{\max} \in \reals^K$, and binary box constraints, where $\mathcal H^{\discrete} = \{ \eta^{\min}_1, \eta^{\max}_1 \} \times \cdots \times \{ \eta^{\min}_k, \eta^{\max}_k \}$.
Throughout the remainder of the paper, we will consider continuous investments decisions, $\mathcal H = \mathcal H^{\cont}$, but we will refer to the discrete case when relevant.

\paragraph{Bilevel form.}
The classical problem~\eqref{eq:cep} can be rewritten as
\begin{equation} \label{eq:cep-bilevel}
\begin{array}{ll}
    \textrm{minimize} \quad & \gamma^T \eta + c(x^*(\eta)) \\[0.25em]
    \textrm{subject to}
    & \eta \in \mathcal H,
\end{array}
\end{equation}
where the variable is now just $\eta \in \reals^K$.
We refer to~\eqref{eq:cep-bilevel} as the \textit{bilevel form} of the classical expansion problem (and refer to~\eqref{eq:cep} as the \textit{joint form}), since it involves optimizing a function of the outcome of another optimization problem~\cite{Pozo2017-kz}.
The bilevel form of classical expansion planning is useful for deriving decomposition-based algorithms that consist of generating cuts on $c^{\textrm{opt}}(\eta) = c(x^*(\eta))$~\cite{Pereira1985-zo}.

\subsection{Multi-value expansion planning}

\textit{Multi-value expansion planning} is the problem of choosing long-term investments to minimize some arbitrary function of the dispatch outcome,
\begin{equation} \label{eq:mep}
\problemtag
\begin{array}{ll}
    \textrm{minimize} \quad & \gamma^T \eta + h(x^*(\eta), \lambda^*(\eta)) \\[0.25em]
    \textrm{subject to}
    & \eta \in \mathcal H,
\end{array}
\end{equation}
where the variable is $\eta \in \reals^K$.
The function $h : \reals^N \times \reals^M \rightarrow \reals$ is the \textit{planner's objective} and may differ from the dispatch cost $c$.
Multi-value expansion planning generalizes classical expansion planning, since we can set $h(x, \lambda)  = c(x)$.
When $h(x, \lambda) \neq c(x)$, the planner chooses $\eta$ to minimize some function of $x^*(\eta)$ and $\lambda^*(\eta)$, given that the dispatch outcomes $x^*(\eta)$ and $\lambda^*(\eta)$ are still chosen to minimize cost.
We refer to~\eqref{eq:mep} as the \textit{implicit form} of multi-value expansion planning because the objective includes an implicit function.
We also refer to the entity that solves multi-value expansion planning as the \textit{planner}, which, in general, need not be the same entity as the operator. 
Multi-value expansion planning can be considered as a general class of bilevel expansion planning problems.

The name \textit{multi-value} comes from the fact that the planner may consider multiple values related to the dispatch outcome, not just cost.
In a modern wholesale electricity market, the operator chooses the dispatch outcome $x^*(\eta)$ and prices $\lambda^*(\eta)$ to minimize total cost, since it coincides with the welfare maximizing and market clearing solution~\cite{Gribik2007-bi}.
However, the planner may not be interested in minimizing total cost;
if the planner, for instance, is a utility purchasing power on behalf of their consumers, then they are interested in minimizing the price they pay for power, i.e., the surplus they capture as a consumer, not the overall cost.
Similarly, other agents investing in network upgrades will have other differing and possibly competing objectives, e.g., a state policy maker funding grid investment might wish to minimize system emissions.
These agents cannot change the cost-minimizing nature of the dispatch model, but they can strategically choose network investments that account for it.
In this way, the implicit form of multi-value expansion planning can be interpreted as a direct formulation of the underlying Stackelberg game: the planner is the leader and chooses $\eta$ in the first round, and the operator is the follower and chooses $x^*$ and $\lambda^*$ in the second round following a deterministic strategy known to both agents.

\subsection{Stochastic multi-value expansion planning}
\label{sec:problem-stochastic}

In many settings, there is uncertainty about the state of the system, e.g., future load, technology costs, and renewable production, after the investment decisions have been made.
We can explicitly model this uncertainty by solving a stochastic version of the multi-value expansion planning problem in which the dispatch model is selected uniformly at random from one of $S$ scenarios.
The dispatch model in scenario $s$ is
\begin{equation} \label{eq:dispatch-stochastic}
\begin{array}{ll}
    \textrm{minimize} \quad & c_s(x) \\[0.25em]
    \textrm{subject to}
    & A_s(\eta) x \leq b_s(\eta),
\end{array}
\end{equation}
where the variable is $x \in \reals^N$, and $c_s : \reals^N \rightarrow \reals$, $A_s : \reals^K \rightarrow \reals^{M \times N}$, and $b : \reals^K \rightarrow \reals^M$ define the costs and constraints of the scenario.
Let $x_s^*(\eta) : \reals^K \rightarrow \reals^N$ and $\lambda_s^*(\eta) : \reals^K \rightarrow \reals^M$ denote the solution to the $s$th dispatch problem and its dual, respectively.
Then \textit{stochastic multi-value expansion planning} is the problem of choosing long-term investments to minimize the expected value of some function of the dispatch outcome,
\begin{equation} \label{eq:mep-stochastic}
\begin{array}{ll}
    \textrm{minimize} \quad & \gamma^T \eta + (1/S) \sum_{s=1}^S h(x_s^*(\eta), \lambda_s^*(\eta)) \\[0.25em]
    \textrm{subject to}
    & \eta \in \mathcal H,
\end{array}
\end{equation}
where the variable is $\eta \in \reals^K$.

\section{Illustrative examples}
\label{sec:example}

\newcommand{\desired}{\textrm{desired}}

In this section we give three examples that illustrate multi-value expansion planning problems on a small 6-bus test network.
We first introduce a simple dispatch model in which generators with linear costs dispatch power across a transmission network using linearized power flow constraints.
Then we consider optimal transmission investment for three different objectives: cost minimization across the system, emissions minimization, and profit maximization for an individual generator.
Finally, we give an example of a dynamic dispatch model with multiple time periods, for which the same planning problems can be solved.

% model
\paragraph{Dispatch model.}
Consider an electricity network with $n$ nodes.\footnote{%
Although $n$ is used as an index for the primal dispatch variables in the previous section, we use it here as the number of nodes in the network in order to remain consistent with standard power systems notation.
Other symbols, namely $c$ and $K$, are slightly overloaded but in a fashion consistent with Section~\ref{sec:problem}.
}
Each node $i$ has electricity demand $p_i^{\desired} \in \reals_+$, which can be curtailed (if needed) at a large cost $\delta \in \reals_+$.
Node $i$ may also have a generator with capacity $g_i^{\max} \in \reals^+$ and linear cost $c_i \in \reals_+$;
nodes without generators have $g_i^{\max} = 0$.
The nodes are connected by a set of $K$ transmission lines.
Each line $k$ has electrical susceptance $\beta_k \in \reals_+$ and per-susceptance capacity $f_k^{\max} \in \reals^+$.
Each line also has a source node and a sink node $\ell_1(k) \neq \ell_2(k) \in \{1, \ldots, n\}$, respectively.
The topology of the whole network can be specified by the incidence matrix $L \in \reals^{n \times K}$ with entries
\begin{equation*}
    L_{ik} = \begin{cases}
        1 & i = \ell_1(k) \\
        -1 & i = \ell_2(k) \\
        0 & \textrm{otherwise}
    \end{cases}.
\end{equation*}
The transmission line flows $f_k \in \reals$ must satisfy both capacity constraints, $-\eta_k f_k^{\max} \leq f_k \leq \eta_k f_k^{\max}$, and voltage phase angle constraints, $f = \diag(\eta) L^T \theta$, where $\theta \in \reals^n$ are the nodal voltage phase angles.
The phase angles are relative to an arbitrary reference node, so we set $\theta_1 = 0$.
The full dispatch model is
\begin{equation} \label{eq:example-dispatch}
\begin{array}{ll}
    \textrm{minimize} \quad & 
    \delta \ones^T p 
    + c^T g
    + \epsilon \left( \| p \|_2^2 + \|g\|_2 + \| f \|_2^2 + \| \theta \|_2^2 \right) \\[0.25em]
    \textrm{subject to}
    & f = \diag(\beta) L^T \theta \\[0.25em]
    & g - p = M f \\[0.25em]
    & \theta_1 = 0 \\[0.25em]
    & 0 \leq p \leq p^{\desired} \\[0.25em]
    & 0 \leq g \leq g^{\max} \\[0.25em]
    & -\diag(\beta) f_j^{\max} \leq f \leq \diag(\beta) f^{\max}
\end{array}
\end{equation}
where the variables are $p, g, \theta \in \reals^n$ and $f \in \reals^K$.
Above, $\epsilon > 0$ is a small constant coefficient of a quadratic term used to ensure the objective is strongly convex.
We consider transmission expansion planning problems in which the susceptances can be modified to meet some objective.
Each line susceptance has a box constraint $\beta_k^{\min} \leq \beta_k \leq \beta_k^{\max}$ and investment cost $\gamma_k \in \reals_+$.
Here, the operation variable is $x = (p, g, \theta, f)$ and the planning variable is $\eta = \beta$.
The dispatch cost function is $c(x) = \delta \ones^T p + c^T g + \epsilon \left( \| p \|_2^2 + \|g\|_2 + \| f \|_2^2 + \| \theta \|_2^2 \right)$, and the constraint functions $A(\eta)$ and $b(\eta)$ include all the linear and constant coefficients in the constraints, respectively;
each coefficient is an affine function of $\eta$.

To make things concrete, we consider a classical 6-node test network from~\cite{Garver1970-vi, Romero1994-si} modified to include generator fuel costs and emissions rates; see Figure~\ref{fig:sixbus-nets}.
The network has 6 nodes and 3 generators;
initially, there 6 transmission lines and one of the generators is disconnected, so not all demand can be satisfied; see Panel~A.
The planner may upgrade any of the existing 6 transmission lines or build new transmission lines between any two nodes in order to optimize their objectives.
However, the generator power outputs and transmission line flows themselves are operational decisions chosen by a system operator to minimize cost and clear the market in~\eqref{eq:example-dispatch}.

We consider the following three examples of planning problems on this network: cost minimization, emissions minimization, and individual-generator profit maximization.

\begin{figure}
    \centering
    \includegraphics[width=6.5in]{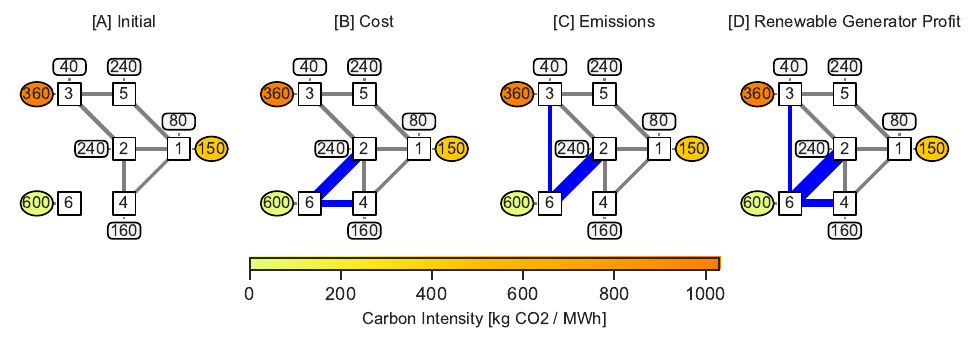}
    \caption{
    Illustrative examples of multi-value expansion planning.
    (A) Initial network before transmission expansion.
    (B) Transmission expansion (blue lines) that minimizes total cost.
    (C) Expansion that minimizes emissions (i.e., emissions weight $w = \infty$).
    (D) Expansion that maximizes the price at node~6.
    }
    \label{fig:sixbus-nets}
\end{figure}

\paragraph{Cost minimization.}
In the cost minimization problem, the planner solves
\begin{equation}
\begin{array}{ll}
    \textrm{minimize} \quad & \gamma^T \eta + c(x^*(\eta)) \\[0.25em]
    \textrm{subject to}
    & \eta \in \mathcal H,
\end{array}
\end{equation}
where the variable is $\eta \in \reals^K$.
This problem corresponds to standard expansion planning and can be reformulated as a joint optimization problem over $p$, $g$, $\theta$, and $\eta$.
However, the resulting problem is nonconvex because the constraint $f = \diag(\beta) L^T \theta$ is bilinear in $\eta$ and $\theta$.
% A number of methods have been developed to address this problem, see~\cite{?}.
The transmission expansion decisions for the 6-bus system are displayed as blue lines in Figure~\ref{fig:sixbus-nets}, Panel~B.

\paragraph{Emissions minimization.}
Suppose each generator has nonnegative emissions rate $e_i \in \reals_+$.
A planner may wish to invest in transmission in order to alleviate congestion that limits the use of low emissions generators.
In the emissions-aware expansion planning problem~\cite{Degleris2021-qg}, the planner solves
\begin{equation}
\label{eq:example-emissions}
\begin{array}{ll}
    \textrm{minimize} \quad & \gamma^T \eta + c(x^*(\eta)) + w e^T g^*(\eta) \\[0.25em]
    \textrm{subject to}
    & \eta \in \mathcal H,
\end{array}
\end{equation}
where the variable is $\eta \in \reals^K$.
(Here, $g^*(\eta)$ represents the optimal choice of $g$ in~\eqref{eq:example-dispatch} given $\eta$.)
The weight $w > 0$ can be interpreted as the implied cost of emissions.
Although emissions are not explicitly incorporated into the generator costs $c$ (e.g., via a carbon tax), the planner has chosen a dollar value to associate with mitigated emissions.
Emissions-aware planning is especially useful for policy makers seeking to reduce emissions in the power grid, and provides a  unified approach for jointly planning economic and policy-driven expansions.

We display results for an emissions-aware expansion planning example in Figure~\ref{fig:sixbus-nets}, Panel~C.
Notably, the optimal transmission investment is different from that of cost minimization in Panel~B: instead of connecting to the loads at nodes~2 and~4, the islanded generator is connected to a high emissions generator at node~3.
To better understand the trade-off between cost and emissions, we plot their approximate Pareto frontier in Figure~\ref{fig:pareto}.
The results suggest strategically selecting transmission investments can lead to a large decrease in emissions at a slight (10-30\%) increase in total cost.

\begin{figure}
    \centering
    \includegraphics[width=3.5in]{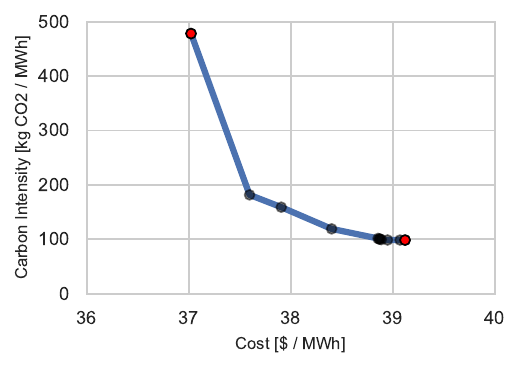}
    \caption{
        Trade-off between total cost (investment cost plus dispatch cost) and emissions for Problem~\eqref{eq:example-emissions} using the 6-node network in Figure~\ref{fig:sixbus-nets}.
        (Black dots) Solutions generated by varying  the emissions weight $w \in [0, 10]$ and solving the resulting multi-value expansion  planning problem using Algorithm~\ref{alg:gradient-descent}.
        (Green line) Pareto frontier for the cost-emissions trade-off.
        (Red dots) The cost-emissions profile of the networks in Figure~\ref{fig:sixbus-nets}, Panels~B and~C.
    }
    \label{fig:pareto}
\end{figure}

\paragraph{Profit maximization.}
If a merchant generator is responsible for funding transmission investment, they are naturally more interested in their own profit than the overall cost of the system.
Let $\nu^*(\eta) \in \reals^n$ be the optimal dual variable of the constraint $g - d = Mf$ in~\eqref{eq:example-dispatch}.
The variable $\nu_i^*(\eta)$ gives the \textit{locational marginal price} at node $i$;
after the operator solves the dispatch problem, they publish the prices $\nu^*(\eta)$ to correctly incentivize each generator to produce the correct amount of power.
The profit maximization problem is
\begin{equation}
\begin{array}{ll}
    \textrm{minimize} \quad & \gamma^T \eta + \sum_{i \in \mathcal G} \nu_i^*(\eta) \cdot g_i^*(\eta)
    \\[0.25em]
    \textrm{subject to}
    & \eta \in \mathcal H,
\end{array}
\end{equation}
where the variable is $\eta \in \reals^K$, and $\mathcal G \subset \{1, \ldots, n\}$ is the set of generators owned by the merchant.\footnote{
    The outer objective $h$ in the profit maximization problem is bilinear and hence nonconvex.
    As we will see later, this means the relaxation derived in Section~\ref{sec:relax} will not be convex either.
    Fortunately, because the objective is bilinear, the same techniques can be applied to relax the terms in the objective as well.
}
% The outer objective in the profit maximization problem is a nonconvex function of $\nu^*(\eta)$ and $g^*(\eta)$ since it includes a bilinear term.
% However, the methods that will be presented in Section~\ref{?} can still be used to solve profit maximization problems or, alternatively, we can replace the objective with its linear approximation at some initial point.
In the 6-bus example, we consider a profit maximization problem for a merchant that owns generator $\mathcal G = \{ 6 \}$ and is paying for a transmission upgrade to connect to the system.
We show the resulting expansion decisions in Figure~\ref{fig:sixbus-nets}, Panel~D.
Note that profit maximization leads to a different set of network upgrades than either cost (Panel~B) or emissions minimization (Panel~C); the additional transmission allows for increased export capacity from node six and therefore higher local prices.

\subsection{Dynamic model}

We can extend the dispatch model in~\eqref{eq:example-dispatch} to be dynamic and model multiple time periods.
The model is exactly the same as~\eqref{eq:example-dispatch}, except that it is solved over $T$ time periods with battery storage devices.
It is given by the optimization problem,
\begin{equation} \label{eq:dynamic-dispatch}
\begin{array}{ll}
    \textrm{minimize} \quad & 
    \sum_{t=1}^T \left(
    \delta \ones^T p_t 
    + c^T g_t
    + \epsilon \left( \|p_t\|_2^2 + \|g_t\|_2 + \|f_t\|_2^2 + \|\theta_t\|_2^2 + \|s_t\|_2^2 + \|v_t\|_2^2 \right) 
    \right) \\[0.25em]
    \textrm{subject to}
    & f_t = \diag(\beta) L^T \theta_t \\[0.25em]
    & g_t - p_t - v_t = M f_t \\[0.25em]
    & s_t = s_{t-1} + v_t \\[0.25em]
    & \theta_1 = 0 \\[0.25em]
    & 0 \leq p_t \leq p_t^{\desired} \\[0.25em]
    & 0 \leq g_t \leq g_t^{\max} \\[0.25em]
    & 0 \leq s_t \leq \diag(\omega) v^{\max} \\[0.25em]
    & -v^{\max} \leq v \leq v^{\max} \\[0.25em]
    & -\diag(\beta) f_j^{\max} \leq f_t \leq \diag(\beta) f^{\max}
\end{array}
\end{equation}
where the variables are $p_t, g_t, \theta_t, s_t, v_t \in \reals^n$ and $f_t \in \reals^K$ for $t = 1, \ldots, T$.
Note that the desired load at node $i$ is now the time varying quantity $(p_t^{\desired})_i \in \reals_+$.
Similarly, the maximum nodal generation capacity $(g_t^{\max})_i$ also varies with time.
Costs and transmission capacities are considered fixed across time, but could be made time varying if desired.
The model also includes lossless battery storage at each node with maximum (dis)charge power $v^{\max}_i \in \reals_+$ and duration $\omega_i \in \reals_+$. 
The battery dynamics are given by the constraint $s_t = s_{t-1} + v_t$, with $s_0 = 0$ as an initial condition.
In this problem, we set the planning variable to be $\eta = (\beta, g^{\max}, v^{\max})$, jointly planning all three device capacities.

The main difference between the static and dynamic models is the addition of storage (with expandable capacities).
Planning problems for the dynamic model are almost identical to their static counterparts.
For example, the emissions minimization problem is
\begin{equation}
\label{eq:example-emissions-dynamic}
\begin{array}{ll}
    \textrm{minimize} \quad & \gamma^T \eta + c(x^*(\eta)) + w \sum_t e^T g_t^*(\eta) \\[0.25em]
    \textrm{subject to}
    & \eta \in \mathcal H,
\end{array}
\end{equation}
where the variable is $\eta \in \mathcal H$.
The dynamic model is, of course, straightforward to extend in the usual ways.
For example, we could add other devices like DC transmission lines or flexible loads.
We could also add additional constraints, such as ramping constraints or lossy batteries.
One benefit of our general planning framework is that it is simple to extend all our results to these cases by changing $c$, $A$, $b$, and the resulting Jacobian $\partial z^*(\eta)$.

\section{Scalable gradient algorithm}
\label{sec:gradient-algorithm}

% Although branch-and-bound algorithms produce an exact solution, their exponential computational complexity makes impractical for solving multi-value expansion planning problems on real-world electricity systems.
% Similarly, directly applying heuristic methods to the nonconvex quadratic program~\eqref{eq:mep-sd} is often slow and leads to poor quality solutions because of the difficulty of satisfying various bilinear constraints.

In this section, we propose a local gradient algorithm that can be applied directly to the implicit form of multi-value expansion planning in~\eqref{eq:mep}.
We first explain how to differentiate the solution maps $x^*(\eta)$ and $\lambda^*(\eta)$ using the \textit{implicit function theorem}.
Then, we derive a simple gradient-based algorithm that converges to a local optimum in polynomial time.
We generalize this algorithm to the stochastic setting and give two heuristic improvements that empirically seem to improve the convergence rate and the solution quality.

\subsection{Implicit differentiation}

% \paragraph{KKT conditions.}
The vectors $x \in \reals^N$ and $\lambda \in \reals^M$ are solutions to the primal and dual dispatch models~\eqref{eq:dispatch} and~\eqref{eq:dual-dispatch}, respectively, if and only if they satisfy $\kappa(x, \lambda, \eta) = 0$, where
\begin{equation}
\label{eq:kkt}
    \kappa(x, \lambda, \eta) = \begin{bmatrix}
        \nabla c(x) + A(\eta)^T \lambda \\
        \diag(\lambda) ( A(\eta) x - b(\eta) )
    \end{bmatrix}.
\end{equation}
The operator $\kappa : \reals^{N+M+K} \rightarrow \reals^{N+M}$ is the \textit{Karush-Kuhn-Tucker (KKT) operator};
finding a solution to a convex optimization problem and its dual is equivalent to solving the system of equations defined by the problem's KKT operator.
In this way, we can think of the primal and dual dispatch maps $x^*(\eta)$ and $\lambda^*(\eta)$ as the solutions to the nonlinear system of equations $\kappa(x, \lambda, \eta) = 0$ parameterized by $\eta$.
This gives a method for differentiating the dispatch maps.

\begin{theorem}[Implicit Function Theorem,~\cite{Dontchev2014-ck}, Chapter~1B]
\label{thm:ift}
    Consider a continuously differentiable function $\kappa : \reals^D \times \reals^K \rightarrow \reals^D$.
    Fix $\eta_0 \in \reals^K$ and suppose there exists a unique point $z_0 \in \reals^D$ such that $\kappa(z_0, \eta_0) = 0$.
    Then there is a function $z^*(\eta) : \Omega \rightarrow \reals^D$ defined on an open set $\Omega \subset \reals^K$ containing $z_0$ such that $\kappa(z^*(\eta), \eta) = 0$ for all $\eta \in \Omega$.
    Moreover, if $\partial_1 \kappa(z^*(\eta), \eta)$ is invertible, then $z^*(\eta)$ is differentiable with Jacobian,
    \begin{equation*}
        \partial z^*(\eta) = -\partial_1 \kappa(z^*(\eta), \eta)^{-1} \cdot \partial_2 \kappa(z^*(\eta), \eta).
    \end{equation*}
\end{theorem}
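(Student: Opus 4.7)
The plan is to prove existence of the implicit function $z^*(\eta)$ via the Banach fixed-point theorem applied to a Newton-like map, and then to derive the Jacobian formula by straightforward chain-rule differentiation of the identity $\kappa(z^*(\eta),\eta)=0$. Since this is the classical Implicit Function Theorem, I will outline the main steps rather than reproving every analytic estimate.

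First I would address existence. Assume $\partial_1 \kappa(z_0,\eta_0)$ is invertible (the theorem's hypothesis at the point $(z_0,\eta_0)$). Define the map
\begin{equation*}
    T_\eta(z) = z - \partial_1 \kappa(z_0,\eta_0)^{-1}\,\kappa(z,\eta),
\end{equation*}
so that fixed points of $T_\eta$ correspond exactly to solutions of $\kappa(z,\eta)=0$. By continuous differentiability of $\kappa$, the partial derivative $\partial_1 T_\eta(z) = I - \partial_1 \kappa(z_0,\eta_0)^{-1}\,\partial_1 \kappa(z,\eta)$ vanishes at $(z_0,\eta_0)$ and is continuous, so on a closed ball $\bar B_r(z_0)$ and for $\eta$ in a sufficiently small neighborhood $U$ of $\eta_0$, the operator norm of $\partial_1 T_\eta$ is bounded by $1/2$. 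Combined with the estimate $\|T_\eta(z_0)-z_0\| = \|\partial_1\kappa(z_0,\eta_0)^{-1}\kappa(z_0,\eta)\|$, which can be made smaller than $r/2$ by shrinking $U$ (using continuity of $\kappa$ and $\kappa(z_0,\eta_0)=0$), this shows $T_\eta$ is a contraction mapping $\bar B_r(z_0)$ into itself. The Banach fixed-point theorem then produces a unique fixed point $z^*(\eta)\in\bar B_r(z_0)$ for each $\eta\in U$, and standard estimates on contractions show $\eta\mapsto z^*(\eta)$ is continuous.

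Next I would establish differentiability and the Jacobian formula simultaneously. Since $\kappa$ is $C^1$ and $\partial_1\kappa(z_0,\eta_0)$ is invertible, by continuity of the determinant $\partial_1\kappa(z^*(\eta),\eta)$ remains invertible on a (possibly smaller) neighborhood of $\eta_0$. Fix $\eta$ in this neighborhood and let $\Delta\eta$ be a small perturbation. Write $\Delta z = z^*(\eta+\Delta\eta) - z^*(\eta)$; the identity $\kappa(z^*(\eta+\Delta\eta),\eta+\Delta\eta) - \kappa(z^*(\eta),\eta) = 0$ together with the first-order Taylor expansion of $\kappa$ gives
\begin{equation*}
    \partial_1\kappa(z^*(\eta),\eta)\,\Delta z + \partial_2\kappa(z^*(\eta),\eta)\,\Delta\eta + o\bigl(\|\Delta z\| + \|\Delta\eta\|\bigr) = 0.
\end{equation*}
Solving for $\Delta z$ using invertibility of $\partial_1\kappa$, and combining with the continuity estimate $\|\Delta z\| = O(\|\Delta\eta\|)$ obtained from the contraction argument, yields $\Delta z = -\partial_1\kappa^{-1}\partial_2\kappa\,\Delta\eta + o(\|\Delta\eta\|)$, which proves differentiability and identifies the Jacobian as claimed.

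I expect the main obstacle is the existence step: carefully choosing the radius $r$ and neighborhood $U$ so that $T_\eta$ simultaneously contracts and maps $\bar B_r(z_0)$ to itself requires juggling two continuity estimates. Once existence and local continuity of $z^*$ are in hand, the differentiability step is essentially the chain-rule computation $\partial_1\kappa\cdot\partial z^* + \partial_2\kappa = 0$ made rigorous. A cleaner alternative would be to package everything into one application of the Inverse Function Theorem by introducing $F(z,\eta) = (\kappa(z,\eta),\eta)$, whose derivative at $(z_0,\eta_0)$ is block triangular with diagonal blocks $\partial_1\kappa(z_0,\eta_0)$ and the identity on $\reals^K$, hence invertible; the local inverse of $F$ then has the form $(w,\eta)\mapsto(G(w,\eta),\eta)$, and $z^*(\eta) := G(0,\eta)$ is the desired implicit function, inheriting differentiability directly from $F^{-1}$.
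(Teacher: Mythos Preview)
Your proof is a correct, standard treatment of the classical Implicit Function Theorem. However, the paper does not actually prove this statement: it is simply quoted, with citation to~\cite{Dontchev2014-ck}, Chapter~1B, as a known result from the literature and used as a tool for differentiating the dispatch map. So there is no ``paper's own proof'' to compare against; you have supplied a proof where the paper intentionally supplies none.

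For what it is worth, both routes you sketch---the Banach fixed-point argument on the Newton map $T_\eta$ and the packaging via the Inverse Function Theorem applied to $F(z,\eta) = (\kappa(z,\eta),\eta)$---are standard and complete. One small remark: the theorem as stated in the paper has a minor imprecision (the open set $\Omega$ should contain $\eta_0$, not $z_0$, and the existence of the local implicit function already requires invertibility of $\partial_1\kappa$ at the base point, not merely uniqueness of the root), and your proposal correctly works under the invertibility hypothesis from the start.
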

In the context of the dispatch model, $z^*(\eta) = (x^*(\eta), \lambda^*(\eta))$.
From here forth, we use $z$ instead of $(x, \lambda)$ whenever possible.
The differentiability and uniqueness assumptions follow directly from Assumptions~\ref{assume:feasible},~\ref{assume:convex-diff}, and~\ref{assume:affine-param}.
The gradient of the multi-value expansion planning objective $J(\eta) = \gamma^T \eta + h(z^*(\eta))$ is then
\begin{equation*}
\begin{aligned}
    \nabla J(\eta)
    &= \gamma + \partial z^*(\eta)^T \cdot \nabla h(\eta).
\end{aligned}
\end{equation*}

\paragraph{Fast computation.}
The Jacobians of the solution maps are only used as part of a matrix-vector product;
therefore, we need not compute $\partial z^*$ explicitly and, hence, need not invert $\partial_1 \kappa$ directly.
Instead, we only need to solve the $(M+N)$-dimensional linear system $(\partial_1 \kappa)^T y  = \nabla h$, which can be accomplished efficiently either using a sparse direct factorization or an indirect method~\cite{Trefethen1997-fv}.

\subsection{Gradient descent}

Assume the investment constraint set $\mathcal H$ is the continuous box $\mathcal H^{\cont}$.
Theorem~\ref{thm:ift} implies that solving Problem~\eqref{eq:mep} amounts to minimizing a differentiable function subject to box constraints.
A standard tool for solving this type of problem is \textit{(projected) gradient descent}, which iteratively updates the network investments using gradient steps projected onto $\mathcal H$.
Evaluating the gradient $\nabla J(\eta)$ requires solving the dispatch model with parameters $\eta$, computing Jacobians of the KKT operator, and solving a linear system.
We describe the full approach, called \textit{implicit (projected) gradient descent} in Algorithm~\ref{alg:gradient-descent}.

Despite the fact that $h(z^*(\eta))$ is defined implicitly through the solution to an optimization problem, gradient descent still provably converges to a local optimum in polynomial time.

\begin{figure}
\centering
\begin{minipage}{.8\linewidth}
\begin{algorithm}[H]
\caption{\textit{Projected gradient descent}}
\label{alg:gradient-descent}
\vspace{1em}
\begin{tabbing}
\textbf{given:} an initial point $\eta^{(0)} \in \mathcal H$ and step size $\alpha > 0$ \\
\textbf{while:} stopping criterion is not satisfied \\
\qquad  \= 1. solve the dispatch model~\eqref{eq:dispatch} and its dual~\eqref{eq:dual-dispatch} with parameter $\eta^\iter$ \\
        \> 2. compute the gradient $\nabla J(\eta^\iter) = \gamma + \partial z^*(\eta^\iter)^T \cdot \nabla h(z^*(\eta^\iter))$ \\
        \> 3. set $\eta^{\iterplus} := \proj_{\mathcal H}\left( \eta^\iter - \alpha \nabla J(\eta^\iter) \right)$
\end{tabbing}
\vspace{1em}
\end{algorithm}
\end{minipage}
\end{figure}

\begin{theorem}[Gradient descent convergence]
\label{thm:gradient-descent}
    Assume $h$ is differentiable with Lipschitz continuous gradient.
    Then, for sufficiently small step size, projected gradient descent (Algorithm~\ref{alg:gradient-descent}) finds an $\epsilon$-stationary point,
    \begin{equation*}
        \| \eta^\iter - \proj_{\mathcal H}\left( \eta^\iter - \alpha \nabla J(\eta^\iter) \right) \|_2^2 \leq \epsilon,
    \end{equation*}
    in $O(1/\epsilon^2)$ iterations, where the complexity is independent of the network parameter dimension $K$.
\end{theorem}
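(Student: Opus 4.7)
The plan is to reduce the theorem to the standard convergence guarantee for projected gradient descent applied to a smooth (i.e., Lipschitz-gradient) nonconvex objective over a compact convex set. The main content is therefore showing that $J(\eta) = \gamma^T \eta + h(z^*(\eta))$ has a Lipschitz continuous gradient on $\mathcal H$; once that is established, the $O(1/\epsilon^2)$ iteration count follows from a well-known textbook result on projected gradient descent for smooth nonconvex problems (see, e.g., Beck or Bubeck), which bounds the squared norm of the gradient mapping after $T$ iterations by $O(L (J(\eta^{(0)}) - J^*) / T)$.

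To establish Lipschitz smoothness of $J$, I would first show that the solution map $z^*(\eta)$ is continuously differentiable on $\mathcal H$ by applying Theorem~\ref{thm:ift} to the KKT operator $\kappa$ in~\eqref{eq:kkt}. Under Assumptions~\ref{assume:convex-diff} and~\ref{assume:affine-param}, $\kappa$ is twice continuously differentiable, and Assumptions~\ref{assume:feasible},~\ref{assume:non-redundant-constraints}, and~\ref{assume:strict-complement} together imply $\partial_1 \kappa(z^*(\eta), \eta)$ is invertible for every $\eta \in \mathcal H$ (this is the standard argument giving dual uniqueness, deferred in the paper to the appendix). Since $\mathcal H$, $\mathcal X$, and $\Lambda$ are compact (Assumption~\ref{assume:bounded-domain}), continuity of $\partial_1 \kappa$ implies its smallest singular value is bounded below by some $\sigma > 0$ uniformly, so $(\partial_1 \kappa)^{-1}$ is uniformly bounded in operator norm on $\mathcal H$. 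Hence $\partial z^*(\eta) = -(\partial_1 \kappa)^{-1} \partial_2 \kappa$ is uniformly bounded, which gives that $z^*$ is Lipschitz on $\mathcal H$.

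Next, I would show $\partial z^*(\eta)$ is itself Lipschitz in $\eta$. Both $\partial_1 \kappa$ and $\partial_2 \kappa$ are continuously differentiable as functions of $\eta$ through the composition with $z^*(\eta)$ (this uses the Lipschitz second derivative hypothesis in Assumption~\ref{assume:convex-diff} together with affinity in Assumption~\ref{assume:affine-param}), and matrix inversion is Lipschitz on the set where the smallest singular value is uniformly bounded below. Combining with Lipschitz continuity of $\nabla h$ and boundedness of $z^*(\mathcal H)$, the product rule applied to $\nabla J(\eta) = \gamma + \partial z^*(\eta)^T \nabla h(z^*(\eta))$ gives a Lipschitz constant $L$ for $\nabla J$ on $\mathcal H$.

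With Lipschitz smoothness in hand, I would invoke the standard nonconvex projected gradient descent result: for step size $\alpha \leq 1/L$, the gradient mapping $G_\alpha(\eta^{(i)}) = (\eta^{(i)} - \proj_{\mathcal H}(\eta^{(i)} - \alpha \nabla J(\eta^{(i)})))/\alpha$ satisfies $\min_{i < T} \| \alpha G_\alpha(\eta^{(i)}) \|_2^2 \leq 2 \alpha (J(\eta^{(0)}) - J^*)/T$, so $T = O(1/\epsilon^2)$ iterations suffice. The dimension-independence claim follows because the Lipschitz constant $L$, the step size $\alpha$, and the initial suboptimality $J(\eta^{(0)}) - J^*$ can all be bounded in terms of problem data (diameters of $\mathcal X$, $\Lambda$, $\mathcal H$, bounds on derivatives of $c$, $A$, $b$, $h$, and the uniform lower bound $\sigma$) rather than the ambient dimension $K$.

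The main obstacle I expect is the uniform lower bound on the smallest singular value of $\partial_1 \kappa$: this is what ultimately converts pointwise invertibility (from Theorem~\ref{thm:ift}) into the global Lipschitz-gradient property needed for the classical convergence rate. Once that uniform bound is secured by compactness of $\mathcal H$ and continuity of the active-set structure guaranteed by Assumptions~\ref{assume:non-redundant-constraints} and~\ref{assume:strict-complement}, the rest of the argument is bookkeeping with standard calculus and the off-the-shelf nonconvex projected gradient rate.
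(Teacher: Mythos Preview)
Your proposal is correct and follows essentially the same approach as the paper: establish that $\nabla J$ is Lipschitz on $\mathcal H$ by showing $\partial_1\kappa(z^*(\eta),\eta)$ is invertible (via Assumptions~\ref{assume:non-redundant-constraints} and~\ref{assume:strict-complement}), using compactness of $\mathcal H$ to get a uniform lower bound on its smallest singular value, deducing Lipschitz continuity of $(\partial_1\kappa)^{-1}$ and hence of $\partial z^*$, and then invoking the standard $O(1/\epsilon^2)$ rate for projected gradient descent on smooth nonconvex objectives. The only cosmetic difference is that the paper writes out the descent-lemma/nonexpansive-projection telescoping argument explicitly rather than citing it as a black box; your identification of the uniform singular-value bound as the crux is exactly right.
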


\begin{proof}
    See Appendix~\ref{appendix:gradient-descent}.
\end{proof}

Since the dispatch model~\eqref{eq:dispatch} is convex, the dispatch map $z^*(\eta)$ and its gradient $\partial z^*(\eta)$ can be evaluated in time polynomial in the problem dimension using, for example, an interior point method~\cite{Boyd2004-tx}.
Therefore, each iteration of Algorithm~\ref{alg:gradient-descent} has polynomial complexity, and, hence, the algorithm reaches an $\epsilon$-stationary point in polynomial time.

Implicit gradient descent has been used for bilevel optimization in a number of settings~\cite{Pedregosa2016-rr, Lorraine2019-ja, Donti2021-ag}.
However, as far as we are aware, this method has yet to be applied to power system expansion planning problems, save a recent work applying gradient methods to emissions-aware problems~\cite{Degleris2021-qg}.
The closest related methods are \textit{sensitivity methods}, e.g.,~\cite{Pereira1985-sn}, which use gradients in a heuristic manner to solve classical grid expansion planning problems.
These methods, however, use duality to explicitly derive the gradient of operation cost $c(x^*(\eta))$, and therefore cannot be generally applied to generic bilevel objectives.

\paragraph{Interpretation as sensitivity-based investment.}
In some cases, gradient descent has a natural interpretation as a sensitivity-based investment process.
In this process, planners observe the current state of the network parameters $\eta^\iter$ and its dispatch outcome $z^*(\eta^\iter)$.
They also observe $\partial z^*(\eta^\iter)$ the sensitivities of the various grid outcomes with respect to said network parameters.
For example, the sensitivity of operation cost $c(x^*(\eta))$ with respect to additional generation capacity at a node in the network is a function of that node's locational marginal price,
and the sensitivity of emissions is a function of the node's \textit{locational marginal emission rate}~\cite{Ruiz2010-mu, Degleris2021-qg, Valenzuela2023-qp}.
After observing these metrics, the planner makes a decision to invest in slightly more or less of each network parameter.
Then, the new state of the network is revealed and the process repeats.
This process is similar to how the grid is incrementally upgraded and modified in practice in response to changing economic and environmental signals.

\paragraph{Parallelization.}
Algorithm~\ref{alg:gradient-descent} can be applied to stochastic multi-value planning problems~\eqref{eq:mep-stochastic} as well.
The gradient of~\eqref{eq:mep-stochastic} is
\begin{equation*}
    \nabla J^{\textrm{stochastic}}(\eta) = \gamma + \frac{1}{S} \sum_{s=1}^S \partial z_{s}^*(\eta)^T \cdot \nabla h(z_{s}^*(\eta)).
\end{equation*}
The summands $\partial z_{s}^*(\eta)^T \cdot \nabla h(z_{s}^*(\eta))$ can be computed in parallel over $S$ processors.
If a machine with many cores is available, this can drastically reduce the complexity of gradient descent applied to large stochastic problems.

\paragraph{Stochastic gradients.}
We can further accelerate gradient descent for stochastic multi-value planning by sampling a subset of the scenarios $\mathcal B \subset \{ 1, \ldots, S\}$ and estimating the gradient stochastically.
This method scales well to problems with hundreds or even thousands of scenarios and has been extensively studied for solving large-scale nonconvex problems in machine learning~\cite{Bottou2010-as}.
We describe this approach, \textit{stochastic gradient descent}, in Algorithm~\ref{alg:stochastic-gradient-descent}.

\paragraph{Initialization and stopping criterion.}
We use two simple initialization schemes: uniform random sampling from the feasible set $\mathcal H$ and using the solution to the convex relaxation in Section~\ref{sec:relax}.
When initializing using random sampling, we find it is often helpful to sample multiple times---say, five to ten---and choose the sample with the lowest objective value as an initialization.
As a stopping criterion, we terminate the algorithm when either: (i) $\eta^\iter$ is approximately stationary as defined in Theorem~\ref{thm:gradient-descent}, (ii) the objective value is sufficiently close to the lower bound produced by the relaxation in Section~\ref{sec:relax}, or (iii) a fixed iteration limit is reached.

\begin{figure}
\centering
\begin{minipage}{.8\linewidth}
\begin{algorithm}[H]
\caption{\textit{Stochastic projected gradient descent}}
\label{alg:stochastic-gradient-descent}
\vspace{1em}
\begin{tabbing}
\textbf{given:} an initial point $\eta^{(0)} \in \mathcal H$, step size $\alpha > 0$, and a batch size $0 < R < S$ \\
\textbf{while:} stopping criterion is not satisfied \\
\qquad  \= 1. sample $s_1, \ldots, s_R$ uniformly at random from $\{ 1, \ldots, S\}$ \\
        \> 2. solve~\eqref{eq:dispatch-stochastic} and its dual with parameter $\eta^\iter$ for each scenario $s_1, \ldots, s_R$ \\
        \> 3. compute a stochastic estimate of the gradient \\[0.5em]
        \> \qquad\qquad $\Delta^\iter = \gamma + \frac{1}{R} \sum_r \partial z_{s_r}^*(\eta^\iter)^T \cdot \nabla h(z_{s_r}^*(\eta^\iter))$ \\[0.5em]
        \> 4. set $\eta^{\iterplus} := \proj_{\mathcal H}\left( \eta^\iter - \alpha \Delta^\iter \right)$
\end{tabbing}
\vspace{1em}
\end{algorithm}
\end{minipage}
\end{figure}

\paragraph{Integer constraints.}
There is no straightforward extension of gradient descent to the integer-constrained case.
A simple heuristic is to solve the continuous problem, then use probabilistic rounding~\cite{Raghavan1987-on} to generate a few integer solutions.
In many cases, this method works well and finds high quality solutions, especially when the number of decisions is large.

\section{Convex relaxation-based lower bound}
\label{sec:sd-bound}

We now show how to reformulate multi-value expansion planning as a nonconvex quadratic program using strong duality
and derive a simple convex relaxation of this problem using the McCormick envelope.
This convex relaxation gives a lower bound on the optimal objective value of~\eqref{eq:mep} and often yields a good initial network investment for the gradient algorithm.
Finally, we show that the relaxation is exact when the network parameters are binary constrained and approximate when the network parameters are continuous and constrained over a small box.

\subsection{Strong duality reformulation}

To reduce a bilevel optimization problem to an single level problem, we can formulate the optimality conditions of the lower level problem as explicit constraints in the upper level problem.
Specifically, the upper level problem can be written as joint optimization problem over $\eta$, $x$, and $\lambda$, then either the KKT or strong duality conditions of the lower level problem can be used to enforce lower level optimality.
See~\cite{Zare2019-fv, Kleinert2023-eg} for an overview and~\cite{Dempe2002-ef} for a comprehensive treatment.
In this work, we will use the strong duality-based approaches because of their simplicity and favorable performance in~\cite{Zare2019-fv}.

For a given $\eta \in \reals^K$, \textit{weak duality} implies that
\begin{equation} \label{eq:weak-duality}
    c(x) \geq u(\lambda, \eta),
\end{equation}
for any $x \in \reals^N$ and $\lambda \in \reals^M$ such that $A(\eta) x \leq b(\eta)$ and $\lambda \geq 0$.
Since the dispatch model~\eqref{eq:dispatch} and its dual~\eqref{eq:dual-dispatch} are convex and concave, respectively, \textit{strong duality} implies that
\begin{equation*}
    c(x^*(\eta)) = u(\lambda^*(\eta), \eta),
\end{equation*}
and, conversely, that if $c(x) = u(\lambda, \eta)$ for $x \in \reals^N$ and $\lambda \in \reals^M$ such that $A(\eta) x \leq b(\eta)$ and $\lambda \geq 0$, then $x = x^*(\eta)$ and $\lambda = \lambda^*(\eta)$, i.e., $(x, \lambda)$ are optimal if and only if $c(x) = u(\lambda, \eta)$.
Because~\eqref{eq:weak-duality} holds for any $x \in \reals^N$ and $\lambda \in \reals^M$, the above statement implies that $c(x) \leq u(\lambda, \eta)$ is equivalent to $(x, \lambda)$ being optimal.
This allows us to reformulate multi-value expansion planning as a joint optimization problem over $\eta$, $x$, and $\lambda$.
The \textit{$\delta$-strong duality formulation} of multi-value expansion planning is
\begin{equation} \label{eq:mep-sd}
\problemtag
\begin{array}{ll}
    \textrm{minimize} \quad & \gamma^T \eta + h(x, \lambda) \\[0.25em]
    \textrm{subject to}
    & c(x) \leq u(\lambda, \eta) + \delta \\[0.25em]
    & A(\eta) x \leq b(\eta) \\[0.25em]
    & \lambda \geq 0 \\[0.25em]
    & \eta \in \mathcal H,
\end{array}
\end{equation}
where the variables are $x \in \reals^N$, $\lambda \in \reals^M$, and $\eta \in \reals^K$.
Above, $\delta \geq 0$ is a small constant used to relax the strong duality constraint.
When $\delta = 0$, \eqref{eq:mep-sd} is equivalent to the implicit form~\eqref{eq:mep} of multi-value expansion planning; any feasible point $(x, \lambda, \eta)$ of \eqref{eq:mep-sd} is such that $\eta$ is feasible for \eqref{eq:mep}, $x = x^*(\eta)$, and $\lambda = \lambda^*(\eta)$.
Since the objective of both problems is the same, this implies the two problems are equivalent.

In practice, we set $\delta$ to a small positive constant so that the constraint $c(x) \leq u(\lambda, \eta) + \delta$ is strictly feasible; this is done to guarantee that certain technical conditions hold for the relaxed problem in Section~\ref{sec:relax}.
Fortunately, in this case, Problem~\eqref{eq:mep-sd} is still a close approximation of~\eqref{eq:mep}, in the following sense.

\begin{definition*}[$\epsilon$-approximation]
    Consider two optimization problems,
    \begin{equation*}
        \min_{(\eta, y) \in \mathcal C_1}\ f(\eta, y)
        \qquad \textrm{and} \qquad \min_{(\eta, z) \in \mathcal C_2}\ g(\eta, z),
    \end{equation*}
    which we refer to as (A) and (B), respectively, where $\mathcal C_1 \subset \reals^{K + N_1}$ and $\mathcal C_2 \subset \reals^{K + N_2}$.
    We say (B) is an \textit{$\epsilon$-approximation to (A) over $\eta$} if for all $(\eta, z) \in \mathcal C_2$, there exists $y \in \mathcal Y$ such that $(\eta, y) \in \mathcal C_1$ and
    \begin{align*}
        | f(\eta, y) - g(\eta, z) | \leq \epsilon.
    \end{align*}
\end{definition*}

\begin{lemma}
    \label{lemma:delta-strong-duality}
    Suppose $h$ is $L$-Lipschitz continuous.
    For all $\epsilon > 0$, there exists some $\delta > 0$ such that the $\delta$-strong duality formulation~\eqref{eq:mep-sd} is an $\epsilon$-approximation of~\eqref{eq:mep} over $\eta$, i.e., for all $(\eta, x, \lambda)$ feasible for~\eqref{eq:mep-sd},
    \begin{equation*}
        | h(x, \lambda) - h(x^*(\eta), \lambda^*(\eta)) | \leq \epsilon.
    \end{equation*}
\end{lemma}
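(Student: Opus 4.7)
The idea is to show that any $(\eta, x, \lambda)$ that is $\delta$-feasible for the strong duality formulation must have $(x,\lambda)$ close to the exact dispatch solution $(x^*(\eta), \lambda^*(\eta))$, and then invoke the Lipschitz continuity of $h$ to transfer this to an $\epsilon$-bound on the objective. The argument splits naturally into a primal-side estimate and a dual-side estimate, linked by strong duality.

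\textbf{Primal side.} Fix any $(\eta, x, \lambda)$ feasible for problem~\eqref{eq:mep-sd}. Weak duality together with strong duality for the dispatch model gives
\begin{equation*}
u(\lambda,\eta) \;\le\; u(\lambda^*(\eta),\eta) \;=\; c(x^*(\eta)) \;\le\; c(x),
\end{equation*}
so the constraint $c(x) \le u(\lambda,\eta) + \delta$ implies both $c(x) - c(x^*(\eta)) \le \delta$ and $u(\lambda^*(\eta),\eta) - u(\lambda,\eta) \le \delta$. By Assumption~\ref{assume:convex-diff}, $c$ is $\mu$-strongly convex for some $\mu > 0$, and since $x^*(\eta)$ minimizes $c$ over the feasible set $\{x : A(\eta)x \le b(\eta)\}$, the first-order optimality condition gives $\nabla c(x^*(\eta))^T(x - x^*(\eta)) \ge 0$ for any primal-feasible $x$. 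Combined with strong convexity, this yields the quadratic growth bound $c(x) - c(x^*(\eta)) \ge (\mu/2)\|x - x^*(\eta)\|_2^2$, hence $\|x - x^*(\eta)\|_2 \le \sqrt{2\delta/\mu}$, uniformly in $\eta \in \mathcal{H}$.

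\textbf{Dual side.} The harder part is bounding $\|\lambda - \lambda^*(\eta)\|_2$. The cleanest route is a compactness-and-contradiction argument. Suppose no $\delta$ worked: then there would exist sequences $\delta_n \downarrow 0$ and $(\eta_n, x_n, \lambda_n)$ feasible for the $\delta_n$-formulation with $\|\lambda_n - \lambda^*(\eta_n)\|_2 \ge \tau > 0$. Since $\mathcal H$ is compact, pass to a convergent subsequence $\eta_n \to \eta_\infty \in \mathcal H$. The bound on $\|x_n - x^*(\eta_n)\|_2$ just obtained shows $x_n \to x^*(\eta_\infty)$ via continuity of $x^*$ (which follows from Theorem~\ref{thm:ift}). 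For $\lambda_n$, one first argues that $\delta_n$-optimal $\lambda_n$ remain bounded for small $\delta_n$ (the concavity of $u$ in $\lambda$, uniqueness of the dual optimum in $\Lambda$ from Assumptions~\ref{assume:non-redundant-constraints}--\ref{assume:bounded-domain}, and compactness of $\mathcal H$ yield a uniform coercivity estimate), so another subsequence gives $\lambda_n \to \lambda_\infty$ with $\lambda_\infty \ge 0$. Taking limits in $u(\lambda^*(\eta_n),\eta_n) - u(\lambda_n,\eta_n) \le \delta_n$ using continuity of $u$ and of $\lambda^*$ shows $u(\lambda_\infty,\eta_\infty) = u(\lambda^*(\eta_\infty),\eta_\infty)$; uniqueness of the dual optimum then forces $\lambda_\infty = \lambda^*(\eta_\infty)$, contradicting $\|\lambda_n - \lambda^*(\eta_n)\|_2 \ge \tau$. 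Hence there is a modulus $\omega(\delta)$, independent of $\eta$, such that $\|\lambda - \lambda^*(\eta)\|_2 \le \omega(\delta) \to 0$ as $\delta \to 0$.

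\textbf{Combining.} With both $\|x - x^*(\eta)\|_2$ and $\|\lambda - \lambda^*(\eta)\|_2$ going to zero with $\delta$, $L$-Lipschitz continuity of $h$ gives
\begin{equation*}
|h(x,\lambda) - h(x^*(\eta),\lambda^*(\eta))| \;\le\; L\bigl(\sqrt{2\delta/\mu} + \omega(\delta)\bigr),
\end{equation*}
and choosing $\delta$ small enough to make the right-hand side at most $\epsilon$ finishes the proof. The main obstacle is the dual estimate: unlike on the primal side, there is no obvious strong-concavity modulus for $u$ in $\lambda$ (since $A(\eta)$ need not have full row rank globally), which is why the contradiction/compactness route, leaning on Assumptions~\ref{assume:non-redundant-constraints}--\ref{assume:bounded-domain} for uniqueness and boundedness of $\lambda^*(\eta)$, is the cleanest way to get a modulus $\omega(\delta)$ that is uniform in $\eta \in \mathcal H$.
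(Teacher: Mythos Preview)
Your argument is correct and follows essentially the same strategy as the paper: show that $\delta$-feasibility forces $(x,\lambda)$ close to $(x^*(\eta),\lambda^*(\eta))$ uniformly in $\eta$, then invoke Lipschitz continuity of $h$. The paper packages this into a single compactness-and-contradiction lemma (Lemma~\ref{lemma:epsilon-approx-is-close}) applied to both the primal and dual variables at once, whereas you split the two sides. Your primal estimate is sharper than the paper's: you exploit strong convexity of $c$ and the first-order optimality condition to get the explicit rate $\|x-x^*(\eta)\|_2 \le \sqrt{2\delta/\mu}$, while the paper obtains only a non-constructive modulus from compactness. On the dual side your contradiction argument is exactly the paper's, including the same soft spot: both you and the paper need the near-optimal $\lambda_n$ to stay in a compact set, which the paper simply asserts (``compactness of the domain of $(x,\lambda)$'') by implicitly restricting to the box $\Lambda$ of Assumption~\ref{assume:bounded-domain}, and which you sketch via a coercivity remark. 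Neither proof fully closes that detail, but your treatment is at least explicit about the gap.
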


\begin{proof}
    See Appendix~\ref{appendix:strong-dual}.
\end{proof}

Even when $h$ is convex, Problem~\eqref{eq:mep-sd} is generally not convex because of the bilinear terms $A(\eta) x$ and $u(\lambda, \eta) = -\lambda^T b(\eta) - c^\star(-A(\eta)^T \lambda)$.
If $c$ and $h$ are quadratic (or linear), then~\eqref{eq:mep-sd} is a \textit{nonconvex quadratic problem} and is hard to solve in general.
However, generic off-the-shelf solvers for nonconvex quadratic programs---for example, a nonlinear interior point solver such as Ipopt~\cite{Wachter2006-ig}---can often find good locally optimal solutions in reasonable amounts of time.

\subsection{McCormick relaxation}
\label{sec:relax}

Consider the bilinear function $\phi(\alpha, \beta) = \alpha \beta$, where $\alpha, \beta \in \reals$.
The function $\phi$ is nonconvex, but has an intuitive convex (and concave) envelope when there are box constraints on $\alpha$ and $\beta$.

\begin{definition*}[Convex envelope]
    Consider a function $\phi : \reals^N \rightarrow \reals$ and a convex set $\mathcal S \subset \reals^N$.
    Then the \textit{convex envelope} of $\phi$ is the largest convex under-estimator of $\phi$ over $\mathcal S$,
    \begin{equation*}
        \mathbf{conv}_{\mathcal S}(\phi) = \sup \{ f : f \textrm{ convex and } f(x) \leq \phi(x) \textrm{ for all } x \in \mathcal S \}.
    \end{equation*}
    Similarly, the \textit{concave envelope} of $\phi$ is the smallest concave over-estimator of $\phi$ over $\mathcal S$,
    \begin{equation*}
        \mathbf{conc}_{\mathcal S}(\phi) = \sup \{ f : f \textrm{ concave and } f(x) \geq \phi(x) \textrm{ for all } x \in \mathcal S \}.
    \end{equation*}
\end{definition*}

Let $\mathcal B = \{ \alpha : \alpha^{\min} \leq \alpha \leq \alpha^{\max}\} \times \{\beta :
 \beta^{\min} \leq \beta \leq \beta^{\max} \}$ and suppose $(\alpha, \beta) \in \mathcal B$.
The box constraints imply that the product $(\alpha - \alpha^{\min}) (\beta - \beta^{\min} )$ is always nonnegative, which means that
\begin{equation*}
    \alpha \beta - \alpha \beta^{\min} - \alpha^{\min} \beta + \alpha^{\min} \beta^{\min} \geq 0,
\end{equation*}
or, equivalently,
\begin{equation} \label{eq:envelope-a}
    \phi(\alpha, \beta) \geq \alpha \beta^{\min} + \alpha^{\min} \beta - \alpha^{\min} \beta^{\min}.
\end{equation}
Equation~\eqref{eq:envelope-a} gives an affine lower bound on $\phi(\alpha, \beta)$.
Similarly, the nonnegative product $(\alpha^{\max} - \alpha) (\beta^{\max} - \beta)$ generates the lower bound
\begin{equation} \label{eq:envelope-b}
    \alpha \beta \geq \alpha^{\max} \beta + \alpha \beta^{\max} - \alpha^{\max} \beta^{\max}.
\end{equation}
Equations~\eqref{eq:envelope-a} and~\eqref{eq:envelope-b} define the convex envelope of $\phi$ over the intervals $\alpha^{\min} \leq \alpha \leq \alpha^{\max}$ and $\beta^{\min} \leq \beta \leq \beta^{\max}$.
We can use similar logic to derive the concave envelope of $\phi$ as well.

\begin{lemma}[McCormick envelope,~\cite{Al-Khayyal1983-io}] \label{lemma:envelope}
    The convex and concave envelopes of $\phi(\alpha, \beta) = \alpha \beta$ on $\mathcal B$ are
    \begin{equation*}
    \begin{aligned}
        \mathbf{conv}_{\mathcal B}(\phi) &= \max\left(
            \alpha \beta^{\min} + \alpha^{\min} \beta - \alpha^{\min} \beta^{\min},\
            \alpha^{\max} \beta + \alpha \beta^{\max} - \alpha^{\max} \beta^{\max}
        \right), \\
        \mathbf{conc}_{\mathcal B}(\phi) & = \min\left(
            \alpha^{\min} \beta + \alpha \beta^{\max} - \alpha^{\min} \beta^{\max},\
            \alpha \beta^{\min} + \alpha^{\max} \beta - \alpha^{\max} \beta^{\min}
        \right),
    \end{aligned}
    \end{equation*}
    respectively.
\end{lemma}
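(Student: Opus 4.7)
The plan is to establish $\mathbf{conv}_{\mathcal B}(\phi) = \max(g_1, g_2)$, where $g_1(\alpha,\beta) = \alpha\beta^{\min} + \alpha^{\min}\beta - \alpha^{\min}\beta^{\min}$ and $g_2(\alpha,\beta) = \alpha^{\max}\beta + \alpha\beta^{\max} - \alpha^{\max}\beta^{\max}$ are the two affine functions from the lemma statement. The ``$\ge$'' direction is already in hand: equations~\eqref{eq:envelope-a} and~\eqref{eq:envelope-b} show that both $g_1$ and $g_2$ are affine underestimators of $\phi$ on $\mathcal B$, so their pointwise maximum is itself a convex underestimator and hence lies below $\mathbf{conv}_{\mathcal B}(\phi)$.

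For the ``$\le$'' direction I would use a triangulation argument. Direct substitution shows that $g_1$ matches $\phi$ at the three corners $v_1 = (\alpha^{\min},\beta^{\min})$, $v_2 = (\alpha^{\min},\beta^{\max})$, $v_3 = (\alpha^{\max},\beta^{\min})$, while $g_2$ matches $\phi$ at $v_2$, $v_3$, and $v_4 = (\alpha^{\max},\beta^{\max})$. Split $\mathcal B$ along the antidiagonal from $v_2$ to $v_3$ into triangles $T_1$ with vertex set $\{v_1, v_2, v_3\}$ and $T_2$ with vertex set $\{v_2, v_3, v_4\}$. For any $(\alpha,\beta) \in T_1$, let $\lambda_1, \lambda_2, \lambda_3 \ge 0$ be its unique barycentric coordinates in $T_1$; since $g_1$ is affine and interpolates $\phi$ at each $v_i$, we have $g_1(\alpha,\beta) = \sum_i \lambda_i \phi(v_i)$. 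Applying Jensen's inequality to any convex underestimator $f$ of $\phi$ yields $f(\alpha,\beta) \le \sum_i \lambda_i f(v_i) \le \sum_i \lambda_i \phi(v_i) = g_1(\alpha,\beta)$, so $\mathbf{conv}_{\mathcal B}(\phi) \le g_1$ on $T_1$, and symmetrically $\mathbf{conv}_{\mathcal B}(\phi) \le g_2$ on $T_2$. The final bookkeeping step is to verify $g_1 \ge g_2$ on $T_1$ and $g_2 \ge g_1$ on $T_2$: the affine difference $g_1 - g_2$ vanishes on the antidiagonal $[v_2, v_3]$ by direct expansion and equals $(\alpha^{\max} - \alpha^{\min})(\beta^{\max} - \beta^{\min}) \ge 0$ at $v_1$, which pins down the sign in each triangle.

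The concave envelope formula follows by an entirely symmetric argument starting from the nonnegative products $(\alpha - \alpha^{\min})(\beta^{\max} - \beta) \ge 0$ and $(\alpha^{\max} - \alpha)(\beta - \beta^{\min}) \ge 0$, which furnish two affine overestimators of $\phi$ that interpolate it at the triples $\{v_1, v_2, v_4\}$ and $\{v_1, v_3, v_4\}$ respectively, and triangulating $\mathcal B$ along the other diagonal from $v_1$ to $v_4$. No single step is technically hard; the main obstacle is simply choosing the triangulation so that each affine function is simultaneously the tight branch of $\max(g_1, g_2)$ on its triangle and an exact convex interpolant of $\phi$ at that triangle's three corners. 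Once this matchup is set up, Jensen's inequality closes both inequalities simultaneously.
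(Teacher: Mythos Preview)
Your argument is correct: the triangulation plus Jensen's inequality establishes the tight upper bound on $\mathbf{conv}_{\mathcal B}(\phi)$, and the sign check on $g_1-g_2$ correctly identifies which branch of the max is active on each triangle. Note, however, that the paper does not supply its own proof of this lemma; it is stated with a citation to Al-Khayyal and Falk~\cite{Al-Khayyal1983-io} and used as a known result, with only the underestimator direction~\eqref{eq:envelope-a}--\eqref{eq:envelope-b} derived explicitly in the surrounding text. Your proof is essentially the classical one found in that reference, so there is no substantive divergence to report.
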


\begin{remark}
    For $(\alpha, \beta) \in \mathcal B = [\alpha^{\min}, \alpha^{\max}] \times [\beta^{\min}, \beta^{\max}]$, the bilinear constraint $y = \phi(\alpha, \beta)$ has convex relaxation
    $
        \mathbf{conv}_{\mathcal B}( \alpha \beta ) \leq y \leq \mathbf{conc}_{\mathcal B}( \alpha \beta ),
    $
    which, for short, we write as $y \in \envelope( \alpha \beta )$.
\end{remark}

The McCormick envelope is a well-know relaxation for bilinear terms in optimization problems; see, for example, ~\cite{Borraz-Sanchez2016-up} for an application to gas expansion planning and~\cite{Ploussard2017-ph, Goodarzi2019-ai} for applications classical power system expansion planning.
Here, we use Lemma~\ref{lemma:envelope} to derive a convex relaxation on the strong duality formulation of multi-value expansion planning when the planner's objective $h$ is convex.
We first introduce three new variables $Y_{ij} = A(\eta)_{ij} x_j$, $V_{ji} = -A(\eta)_{ij} \lambda_{i}$, and $\mu_i = \lambda_i b(\eta)_i$ that each model a product of two variables.
We then introduce box constraints on any unbounded variables and, finally, replace each of these products with upper and lower bounds on their convex and concave envelopes, respectively.
The \textit{$\delta$-relaxed formulation} of multi-value expansion planning is
\begin{equation} \label{eq:mep-relax}
\problemtag
\begin{array}{ll}
    \textrm{minimize} \quad & \gamma^T \eta + h(x, \lambda) \\[0.25em]
    \textrm{subject to}
    & c(x) \leq -\mu - c^\star(-V \ones) + \delta \\[0.25em]
    & Y \ones \leq b(\eta) \\[0.25em]
    & x^{\min} \leq x \leq x^{\max} \\[0.25em]
    & 0 \leq \lambda \leq \lambda^{\max} \\[0.25em]
    & Y_{ij} \in \mathbf{env}( A_{ij}(\eta) x_j ),
        \quad i \in \{1, \ldots, m\},\ j \in  \{1, \ldots, n\} \\[0.25em]
    & V_{ji} \in \mathbf{env}( A_{ij}(\eta) \lambda_i ),
        \quad i \in \{1, \ldots, m\},\ j \in  \{1, \ldots, n\} \\[0.25em]
    & \mu_{i} \in \mathbf{env}( \lambda_i b_i(\eta) ),
        \quad i \in \{1, \ldots, m\} \\[0.25em]
    & \eta \in \mathcal H,
\end{array}
\end{equation}
where the variables are $\eta \in \reals^K$, $x \in \reals^N$, $\lambda \in \reals^M$, $Y \in \reals^{M \times N}$, $V \in \reals^{N \times M}$, and $\mu \in \reals^M$.
Here we drop the subscript on the $\mathbf{conv}$ and $\mathbf{conc}$ operators and use the box constraints implied by other constraints of the problem.
Box constraints on $A_{ij}(\eta)$ and $b_i(\eta)$ are implied from the constraint $\eta \in H$ and the linearity of $A$ and $b$.
The box constraints on $x$ and $\lambda$ arise from Assumption~\ref{assume:bounded-domain} and are assumed to either arise from problem structure or, if needed, can be selected as arbitrarily large values.
Problem~\eqref{eq:mep-relax} is convex, has the same objective as Problem~\eqref{eq:mep-sd}, and relaxes its feasible set: any feasible point of~\eqref{eq:mep-sd} is also feasible for~\eqref{eq:mep-relax}.

\paragraph{Lower bound and initialization.}
Since Problem~\eqref{eq:mep-relax} is a convex relaxation of~\eqref{eq:mep}, its optimal objective value is a lower bound on that of the original multi-value problem~\eqref{eq:mep}.
Moreover, its solution $\eta^{\textrm{relax}} \in \mathcal H$ is, in many cases, close to the solution to~\eqref{eq:mep}.
Therefore, we can use $\eta^{\textrm{relax}}$ as an initialization for Algorithm~\eqref{alg:gradient-descent}.
In this context, we can interpret Algorithm~\eqref{alg:gradient-descent} as a local refinement applied to the output of our convex relaxation.

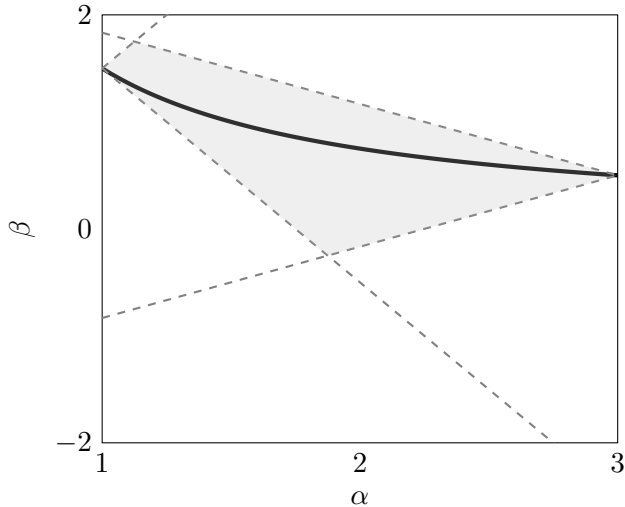
\begin{figure}
\centering
\begin{tikzpicture}

\begin{axis}[
    xlabel={$\alpha$},
    ylabel={$\beta$},
    xmin=1.0, xmax=3,
    ymin=-2, ymax=2,
    grid=none,
    xtick={1, 2, 3},
    ytick={-2, 0, 2},
    tick style={draw=none},
]

% 1.5 = xy
\addplot [domain=1:3, samples=100, smooth, ultra thick, black] {
    1.5/x
};

% Convex envelope
\addplot [domain=1:3, samples=100, smooth, thick, dashed, gray] {
    % x*ymin + xmin*y - xmin*ymin = 1.5
    % x*(-2) + (1)*(y) - (1)*(-2) = 1.5
    % -2*x + y + 2 = 1.5
    -0.5 + 2*x
};
\addplot [domain=1:3, samples=100, smooth, thick, dashed, gray] {
    % x*ymax + xmax*y - xmax*ymax = 1.5
    % x*(2) + (3)*(y) - (3)*(2) = 1.5
    % 2*x + 3y - 6 = 1.5
    (1.5 + 6 - 2*x)/3
};

% Concave envelope
\addplot [domain=1:3, samples=100, smooth, thick, dashed, gray] {
    % xmin*y + x*ymax - xmin*ymax = 1.5
    % (1)*y + x*(2) - (1)*(2) = 1.5
    % y = 1.5 - 2*x + 2
    1.5 - 2*x + 2
};
\addplot [domain=1:3, samples=100, smooth, thick, dashed, gray] {
    % xmax*y + x*ymin - xmax*ymin = 1.5
    % (3)*y + x*(-2) - (3)*(-2) = 1.5
    % 3*y = 1.5 + 2*x - 6
    (1.5 + 2*x - 6)/3
};

% Define the extrema
\coordinate (left) at (axis cs:1, 1.5);
\coordinate (top) at (axis cs:1.125, 1.75);
\coordinate (right) at (axis cs:3, 0.5);
\coordinate (bottom) at (axis cs:1.875, -0.25);

\fill[gray!50, opacity=0.25] (left) -- (top) -- (right) -- (bottom) -- cycle;

% % Fill the triangle with some color
% \fill[green!50,opacity=0.3] (A) -- (B) -- (C) -- cycle;

\end{axis}
\end{tikzpicture}
\caption{
    McCormick envelope of the constraint $\phi = \alpha \beta$ over the box $(\alpha, \beta) \in [1, 3] \times [-2, 2]$. projected onto the plane $\{ (\alpha, \beta, \phi) : \phi = 1.5 \}$.
    (Black line) The nonconvex feasible set.
    (Gray dashed lines) Linear inequalities defined by convex and concave (McCormick) envelopes of $f(\alpha, \beta) = \alpha \beta$ over the boxed region.
    (Gray shaded region) Convexified feasible set.
}
\label{fig:envelope}
\end{figure}

When the network parameters are binary box constrained $\eta \in \mathcal H^{\discrete}$, the relaxation~\eqref{eq:mep-relax} is tight.
Intuitively, this is because the convex envelope of a bilinear function meets the function itself at the endpoints of the interval constraints; see Figure~\ref{fig:envelope}.
As a result, integer-constrained multi-value planning can be converted into a convex mixed-integer program and solved with off-the-shelf methods, if desired.

\begin{theorem}[Tight integer relaxation]
    Consider an integer constrained variable $\alpha \in \{ \alpha^{\min}, \alpha^{\max} \}$ and a box constrained variable $\beta \in [\beta^{\min}, \beta^{\max}]$ (which may or may not have integer constraints as well).
    Then for any $y \in \reals$, the equation $y = \alpha \beta$ holds if and only if
    $
        y \in \envelope( \alpha \beta ).
    $
    As a consequence, if $\mathcal H = \mathcal H^{\discrete}$, then the feasible sets of~\eqref{eq:mep-sd} and its relaxation~\eqref{eq:mep-relax} are equal, i.e., the relaxation is tight.
\end{theorem}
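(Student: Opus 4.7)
The result has two parts: a scalar algebraic claim that the McCormick envelope is tight whenever one of its factors is binary, and a corollary identifying the feasible sets of~\eqref{eq:mep-sd} and~\eqref{eq:mep-relax} when $\mathcal{H} = \mathcal{H}^{\discrete}$. My plan is to prove the scalar claim by direct case analysis and then propagate it through the affine parameterization of $A$ and $b$ from Assumption~\ref{assume:affine-param}. The forward direction of the scalar claim ($y = \alpha\beta \Rightarrow y \in \envelope(\alpha\beta)$) is immediate from Lemma~\ref{lemma:envelope}, since the envelope is by construction a valid outer relaxation of the graph of $\phi$.

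For the reverse direction, I would fix $\alpha = \alpha^{\min}$ (the case $\alpha = \alpha^{\max}$ being entirely symmetric) and substitute into the four affine expressions defining the envelope. One of the two lower bounds immediately simplifies to $\alpha^{\min}\beta$, while the other rearranges to $\alpha^{\min}\beta + (\alpha^{\max}-\alpha^{\min})(\beta-\beta^{\max})$, which is at most $\alpha^{\min}\beta$ since $\beta \leq \beta^{\max}$ and $\alpha^{\max} \geq \alpha^{\min}$. Hence the max of the two lower bounds equals $\alpha^{\min}\beta$. A parallel computation on the concave envelope produces $\alpha^{\min}\beta$ as the min of the two upper bounds. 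Together these pinch $y$ to the single value $\alpha^{\min}\beta = \alpha\beta$. This case analysis is the main technical step, though the arithmetic itself is mechanical.

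To deduce the corollary, I would use the affine parameterization of $A$ and $b$ to write every composite bilinear expression $A_{ij}(\eta) x_j$, $A_{ij}(\eta) \lambda_i$, or $\lambda_i b_i(\eta)$ appearing in~\eqref{eq:mep-sd} as an affine combination of elementary products of the form $\eta_k \xi$ for some scalar $\xi \in \{x_j, \lambda_i, 1\}$. The envelope constraints in~\eqref{eq:mep-relax} on $Y$, $V$, and $\mu$ then decompose into the corresponding affine combinations of elementary McCormick envelopes, one for each product $\eta_k \xi$. When $\eta \in \mathcal{H}^{\discrete}$ every $\eta_k$ is binary, and the scalar claim forces each elementary envelope to collapse to an equality; summing these yields the exact bilinear equalities that the envelopes were introduced to relax, so any feasible point of~\eqml{eq:mep-relax} lies in the feasible set of~\eqref{eq:mep-sd}. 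The reverse inclusion always holds because the envelope is a valid relaxation.

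The main obstacle is not conceptual but notational: one must apply the envelope at the level of elementary products $\eta_k \xi$ rather than at the level of composite expressions $A_{ij}(\eta) x_j$, since tightness depends on having one \emph{binary} factor in each product, whereas the composite affine function $A_{ij}(\eta)$ is generally not binary even when every $\eta_k$ is. Making this decomposition explicit is the step where the affine structure of $A$ and $b$ is essential.
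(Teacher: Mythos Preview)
Your scalar argument is essentially the paper's own: both do a direct case split on $\alpha \in \{\alpha^{\min},\alpha^{\max}\}$ and read the equality $y=\alpha\beta$ off of the four McCormick inequalities. The paper is marginally more economical---when $\alpha=\alpha^{\min}$ it simply notes that the first inequality gives $y\geq\alpha\beta$ and the third gives $y\leq\alpha\beta$, without checking that these dominate the remaining two---but your verification that the other pair is slack is correct and harmless.

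Where you go beyond the paper is in the corollary. The paper states the consequence for $\mathcal H=\mathcal H^{\discrete}$ without further argument, implicitly applying the scalar lemma with $\alpha=A_{ij}(\eta)$ (respectively $b_i(\eta)$) and the induced box on that scalar. You are right to notice that this step requires $A_{ij}(\eta)$ itself to lie in $\{A_{ij}^{\min},A_{ij}^{\max}\}$ whenever $\eta$ is binary, which in general holds only when each entry of $A$ and $b$ depends on at most one coordinate $\eta_k$; this structural property is satisfied in the paper's examples but is not stated as a standing hypothesis. Your remedy---decomposing each composite bilinear term as an affine combination of elementary products $\eta_k\xi$ and applying the scalar lemma there---is sound, but it amounts to \emph{reformulating} the envelope constraints of~\eqref{eq:mep-relax} at the level of those elementary products rather than proving tightness of~\eqref{eq:mep-relax} exactly as written. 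So you have not deviated from the paper's approach so much as identified and patched an unstated hypothesis in the passage from the scalar claim to the corollary.
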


\begin{proof}
    See Appendix~\ref{appendix:integer-envelope}.
\end{proof}

When the network parameters are continuous, \eqref{eq:mep-relax} is not a tight relaxation in general.
However, we can derive an analogous result to the binary case when the width of the box $\mathcal H^{\cont}$ is small: as the width of $H^{\cont}$ goes to zero, the Hausdorff distance between the relaxed set and the original set approaches zero, and so the solution to~\eqref{eq:mep-relax} approaches that of~\eqref{eq:mep-sd}.

\begin{theorem}[Approximate continuous relaxation]
\label{thm:cont-relax}
    % Fix some $\eta^0 \in \mathcal H^{\cont}$.
    For all $\epsilon > 0$, there exists some $\alpha > 0$ such that when
    % \begin{equation*}
    %     \max \left(
    %     \| \eta^{\min} - \eta^0 \|_{\infty},
    %     \| \eta^{\max} - \eta^0 \|_{\infty}
    %     \right) \leq \alpha,
    % \end{equation*}
    $
        \| \eta^{\min} - \eta^{\max} \|_{\infty},
         \leq \alpha,
    $
    then~\eqref{eq:mep-relax} is a $\epsilon$-approximation of~\eqref{eq:mep-sd} over $\eta$.
    % i.e., for all $(\eta, x, \lambda, Y, V, \mu)$ feasible for \eqref{eq:mep-relax}, then there exists some $(x', \lambda')$ such that $(\eta, x', \lambda')$ is feasible for \eqref{eq:mep-sd} and
    % \begin{equation*}
    %     | h(x, \lambda) - h(x', \lambda') | \leq \epsilon.
    % \end{equation*}
\end{theorem}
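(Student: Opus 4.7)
The plan is to exploit the fact that, as the width of the box $\mathcal{H}^{\textrm{cont}}$ shrinks, each McCormick envelope constraint in~\eqref{eq:mep-relax} collapses onto the exact bilinear relation it relaxes, so every relaxed-feasible triple $(\eta, x, \lambda)$ approaches an SD-feasible one. Given $\epsilon > 0$ and any feasible $(\eta, x, \lambda, Y, V, \mu)$ of~\eqref{eq:mep-relax}, I would take as candidate SD-feasible point $(\eta, x^*(\eta), \lambda^*(\eta))$, which is trivially feasible for~\eqref{eq:mep-sd}. Since the $\gamma^T\eta$ term cancels, assuming (as in Lemma~\ref{lemma:delta-strong-duality}) that $h$ is $L$-Lipschitz, it suffices to bound $\|x - x^*(\eta)\| + \|\lambda - \lambda^*(\eta)\|$ by a quantity that tends to $0$ with $\alpha$, uniformly over relaxed-feasible points.

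The first technical step is to quantify the McCormick gap. Because $A$ and $b$ are affine in $\eta$ (Assumption~\ref{assume:affine-param}) and $\eta$ lies in a box of $\ell_\infty$-width at most $\alpha$, the ranges of $A_{ij}(\eta)$ and $b_i(\eta)$ each have width $O(\alpha)$. Combined with the fixed bounded boxes for $x$ and $\lambda$ arising from Assumption~\ref{assume:bounded-domain}, Lemma~\ref{lemma:envelope} gives uniform envelope-gap bounds of the form $|Y_{ij} - A_{ij}(\eta)x_j| \leq C\alpha$, $|V_{ji} - A_{ij}(\eta)\lambda_i| \leq C\alpha$, and $|\mu_i - \lambda_i b_i(\eta)| \leq C\alpha$. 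Summing across indices, and using that $c^\star$ has Lipschitz gradient on the compact image of $-A(\eta)^T\lambda$ (since $c$ is strongly convex and twice differentiable with Lipschitz continuous second derivatives, by Assumption~\ref{assume:convex-diff}), the two substantive constraints of~\eqref{eq:mep-relax} translate into the near-feasibility bounds $A(\eta)x \leq b(\eta) + C_1\alpha\,\ones$ and $c(x) \leq u(\lambda, \eta) + \delta + C_2\alpha$ for constants $C_1, C_2$ independent of the relaxed-feasible point.

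The second step is to convert this near-feasible, near-optimal pair into a genuinely SD-feasible pair with only slightly larger duality gap. By strict feasibility (Assumption~\ref{assume:feasible}) and compactness of $\mathcal{H}$, I can choose a uniform Slater slack $\tau > 0$ and a continuous selection $x_0(\eta)$ with $A(\eta)x_0(\eta) \leq b(\eta) - \tau\,\ones$. Setting $\tilde{x} = (1-t)x + t\, x_0(\eta)$ with $t = C_1\alpha/\tau$ yields $A(\eta)\tilde{x} \leq b(\eta)$, $\|\tilde{x} - x\| = O(\alpha)$, and, by continuity of $c$ on the compact primal box, $c(\tilde{x}) \leq c(x) + O(\alpha)$. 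Thus $(\eta, \tilde{x}, \lambda)$ is feasible for~\eqref{eq:mep-sd} with a duality gap of $\delta + O(\alpha)$. Reusing the argument of Lemma~\ref{lemma:delta-strong-duality}, which combines strong convexity of $c$ with dual uniqueness (guaranteed by Assumptions~\ref{assume:non-redundant-constraints} and~\ref{assume:strict-complement}), I get $\|\tilde{x} - x^*(\eta)\| + \|\lambda - \lambda^*(\eta)\| \leq \psi(\delta + O(\alpha))$ for a modulus $\psi$ vanishing at $0$. Adding $\|\tilde{x} - x\| = O(\alpha)$ by triangle inequality and multiplying by $L$ gives a single bound on $|h(x, \lambda) - h(x^*(\eta), \lambda^*(\eta))|$ that tends to $0$ as $\alpha \to 0$, so taking $\alpha$ sufficiently small completes the proof.

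The main obstacle I anticipate is establishing the \emph{uniformity} of all constants and of the modulus $\psi$ over $\eta \in \mathcal{H}$ and over the relaxed-feasible set. This uniformity relies crucially on the compactness of $\mathcal{H}$, $\mathcal{X}$, and $\Lambda$ (so that Slater slacks, Lipschitz constants of $c^\star$, and the envelope widths all admit uniform bounds), and on the quantitative primal-dual stability provided by Assumptions~\ref{assume:non-redundant-constraints}--\ref{assume:bounded-domain}; once these are in hand, the rest is bookkeeping around a single scalar inequality.
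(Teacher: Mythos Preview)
Your argument has a genuine gap at the final step. You route the approximation through the optimal point $(x^*(\eta), \lambda^*(\eta))$ and bound $|h(x,\lambda) - h(x^*(\eta),\lambda^*(\eta))|$ by $L\bigl[\psi(\delta + O(\alpha)) + O(\alpha)\bigr]$. But as $\alpha \to 0$ this bound converges to $L\psi(\delta)$, not to zero, because $\delta > 0$ is \emph{fixed} in both~\eqref{eq:mep-sd} and~\eqref{eq:mep-relax}. Hence you cannot obtain an $\epsilon$-approximation for any $\epsilon < L\psi(\delta)$ by shrinking $\alpha$ alone. Concretely, take any $(x,\lambda)$ feasible for~\eqref{eq:mep-sd} with the strong-duality constraint active, $c(x) = u(\lambda,\eta) + \delta$; this point is also feasible for~\eqref{eq:mep-relax} (set $Y, V, \mu$ to the exact bilinear values), yet its distance to $(x^*(\eta),\lambda^*(\eta))$ is of order $\psi(\delta)$ and does not shrink with $\alpha$. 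The claim that your bound ``tends to $0$ as $\alpha \to 0$'' is therefore incorrect.

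The paper's proof avoids this by not going through the optimal point at all. It takes the relaxed-feasible $(x,\lambda)$ and interpolates toward a point that is strictly feasible for \emph{both} the primal constraint $A(\eta)x \leq b(\eta)$ (via Assumption~\ref{assume:feasible}) and the strong-duality constraint $c(x) \leq u(\lambda,\eta) + \delta$ (the fixed slack $\delta > 0$ is exactly what supplies a uniform margin here). The interpolation weight is chosen of order $\epsilon$, so the perturbed pair $(\hat x, \hat \lambda)$ satisfies $|h(x,\lambda) - h(\hat x, \hat \lambda)| \leq \epsilon$ directly, while the $O(\alpha)$ envelope error is absorbed by the strict-feasibility margin, making $(\hat x, \hat \lambda)$ genuinely feasible for~\eqref{eq:mep-sd} once $\alpha$ is small enough. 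Your Slater step is close in spirit but only treats the primal constraint; the resulting $(\tilde x, \lambda)$ has duality gap $\delta + O(\alpha)$, not $\delta$, so it is \emph{not} feasible for~\eqref{eq:mep-sd}. To repair the argument you would need a joint strict-feasibility point for both constraints of~\eqref{eq:mep-sd}, interpolate $(x,\lambda)$ toward it, and use the resulting $(\hat x, \hat \lambda)$ itself as the candidate rather than $(x^*(\eta), \lambda^*(\eta))$.
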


\begin{proof}
    See Appendix~\ref{appendix:cont-envelope}
\end{proof}

Theorem~\ref{thm:cont-relax} implies that the lower bound from the solution to~\eqref{eq:mep-relax} approaches the optimal value of~\eqref{eq:mep-sd} as $\| \eta^{\min} - \eta^{\max} \|_{\infty} \rightarrow 0$.
As a result, \eqref{eq:mep-relax} can be used as part of a branch-and-bound scheme to solve multi-value expansion planning exactly~\cite{Lenz2022-cz}, albeit in exponential time in the worst-case.

\paragraph{Reformulation and relaxation for stochastic problems.}
Both the strong duality formulation~\eqref{eq:mep-sd} and its relaxation~\eqref{eq:mep-relax} are straightforward to extend to stochastic multi-value expansion planning.
In this case, the strong-duality formulation is a joint optimization problem over the network parameters $\eta$ and the primal variables $x_1, \ldots, x_S$ and dual variables $\lambda_1, \ldots, \lambda_S$ for each scenario.
The resulting optimization problems have $O(SN)$ variables and $O(SM)$ constraints; this may make them prohibitively expensive to solve, especially for the nonconvex strong-duality formulation and for problems with integer-constrained network investments.

\section{Computational experiments}

In this section, we describe our software implementation of the methods developed in Sections~\ref{sec:gradient-algorithm} and~\ref{sec:sd-bound} and analyze their performance on various sized synthetic models of the U.S.\ Western Interconnect in four experiments.
First, we examine the convergence properties of Algorithm~\ref{alg:gradient-descent} and show how it can be used to refine solutions produced by the convex relaxation.
Second, we compare the performance of Algorithm~\ref{alg:gradient-descent} with that of a traditional interior point method applied directly to the nonconvex strong duality formulation of multi-value expansion planning.
Third, we study how the runtime of Algorithm~\ref{alg:gradient-descent} scales with both the dimension of the problem and the number of scenarios.
Finally, we explore the resulting solutions on a planning problem with a long time horizon.

In our experiments, we solve multi-value expansion planning problems using the same dispatch model from Section~\ref{sec:example}.
Unlike Section~\ref{sec:example}, however, we jointly plan \textit{both} transmission and generation capacities, i.e., $\eta = (\beta, g^{\max})$ in the notation of Section~\ref{sec:example}.
In each experiment, we solve the emissions-aware expansion planning problems from~\eqref{eq:example-emissions} with an emissions penalty of \$400-equivalent per metric ton $\textrm{CO}_2$.
In the first three experiments, Section~\ref{sec:exp-converge} through Section~\ref{sec:exp-scaling}, we treat a single hour as one scenario, ignoring dynamic devices like batteries for simplicity, using the dispatch model in~\eqref{eq:example-dispatch}.
In the large example in Section~\ref{sec:exp-large}, we use the dynamic dispatch model from~\eqref{eq:dynamic-dispatch} with 24 hourly periods, treating a full day as a single scenario and jointly planning generation, transmission, \textit{and} storage capacities.

\paragraph{Software implementation.}
We implement the methods developed in Sections~\ref{sec:gradient-algorithm} and~\ref{sec:sd-bound} in Julia~\cite{Bezanson2017-lz}.
We build the dispatch model from Section~\ref{sec:example} using \verb_Convex.jl_~\cite{Udell2014-jq} and solve it using Mosek~\cite{MOSEK-ApS2022-cf} to implement the dispatch map $z^*(\eta)$.
The Jacobian $\partial z^*(\eta)$ is implemented manually using a modular construction that supports a variety of devices in a (DC) linearized power flow model.
To compute the gradient $\nabla h(z)$ for various planning objectives, we use the automatic differentiation library \verb_Zygote.jl_~\cite{Innes2018-he}.
Finally, we model the strong duality formulation of multi-value expansion planning and its relaxation using \verb_JuMP.jl_~\cite{Dunning2017-gm}.
The nonconvex strong duality formulation is solved with Ipopt~\cite{Wachter2006-ig}, and the relaxation can be solved with Mosek.
For stochastic problems from Section~\ref{sec:problem-stochastic}, we parallelize (multithread) the evaluation of both the dispatch map and its Jacobian.
% In theory, this should mean evaluating the gradient for a problem with $S$ scenarios on a machine with $S$ processors takes the same time as for a problem with one scenario.
% In practice, we observe a small overhead with parallelization; see Section~\ref{sec:exp-scaling} and Figure~\ref{fig:scaling} for more details.
All our experiments are run on a single AMD EPYC 7763 processor with 64 cores (128 virtual cores) and 256 GB of RAM at NERSC.

\paragraph{Western Interconnect dataset.}
We use PyPSA-USA~\cite{Tehranchi2023-jo}, an open source model of the U.S.\ Western Interconnect, to load and process network models.
The dataset includes a synthetic network with 4,786 nodes with 8760 hours of data from 2019.
The network granularity can also be adjusted using a cluster algorithm down to as few as 30 nodes, allowing us to create planning cases with dispatch models of different dimensions.
For example, clustering the network to 100 nodes results in a problem with 222 transmission lines and 379 generators, whereas clustering to 500 nodes results in 1108 and 1356 generators, respectively.
In all of our experiments, we consider a future grid scenario with 50\% load growth relative to 2019.
We reduce generation and transmission capacities by 20\% to account for retirements and outages, leaving all other data the same.
The dataset includes realistic fuel costs, investment costs, hypothetical expansion capacities, electrical network parameters, and energy storage devices. 
However, we emphasize that our grid model is not a finely calibrated dataset suitable for long-term policy recommendations, but rather a realistic grid model for testing the methods developed in this paper.
Additional details on the dataset are available in Appendix~\ref{appendix:pypsa}.

\subsection{Convergence analysis}
\label{sec:exp-converge}

In our Experiment~1, we analyze the convergence of gradient descent for varying step sizes.
We test this on two networks, one with 100 nodes and another 500 nodes, both for a deterministic planning problem and a stochastic one with four scenarios.
In each example, we use the convex relaxation from Section~\ref{sec:relax} to generate a lower bound on the optimal objective value and to initialize Algorithm~\ref{alg:gradient-descent}.

In this experiment and Experiment~2, we consider the \textit{suboptimality gap},
\begin{equation} \label{eq:subgap}
    \textrm{gap} = \frac{ J^\iter }{ J^{\mathrm{lower}} } - 1,
\end{equation}
where $J^{\iter}$ is the objective value of the gradient algorithm and $J^{\mathrm{lower}}$ is the lower bound given by the solution to Problem~\eqref{eq:mep-relax}.
The lower bound may not be tight, in which case the suboptimality gap at a global optima will be greater than zero.
However, a suboptimality gap of zero guarantees global optimality, and a suboptimality gap of $\alpha$ guarantees that objective value of the solution found is no worse than $1 + \alpha$ times the objective value at a global optimum.

We test step sizes from $10^{-3}$ to $10^{-1}$ for each case, displaying our results in Figure~\ref{fig:convergence}.
We terminate gradient descent after 2000 iterations, although it is clear that many of the runs converge more quickly.
As expected by Theorem~\ref{thm:gradient-descent}, gradient descent always converges to a stationary point when the step size is sufficiently small.
Larger step sizes may not necessarily converge (see the 100 node, 4 scenario case), but generally converge faster than smaller step sizes.
Regardless, gradient descent with every tested step size improves upon the relaxation-based solution.
Finally, we note that the suboptimality gap~\eqref{eq:subgap} is less than 10\% in the four tested cases.

\begin{figure}
    \centering
    \includegraphics[width=6.5in]{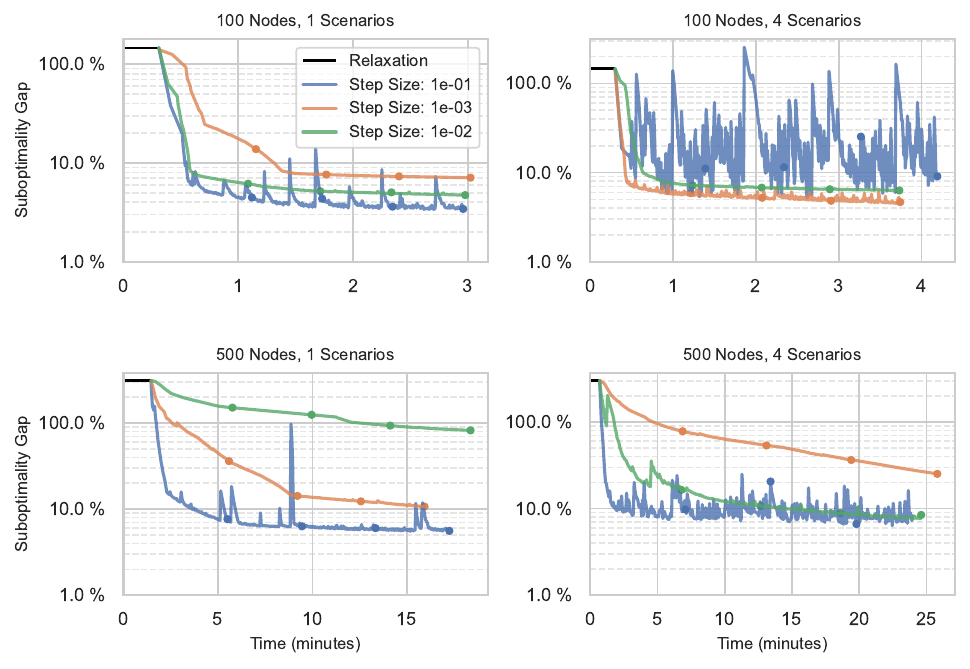}
    \caption{
        Convergence results for gradient descent with varying step sizes.
        (Dashed black line) Lower bound produced by the relaxation from Section~\ref{sec:relax}.
        (Solid black line) Initial choice of the network parameter, produced by the same relaxation.
        The width of the line is the time to solve the relaxation.
        (Solid colored lines) Loss curves for gradient descent.
        Vertical dashed lines mark every 250th iteration.
    }
    \label{fig:convergence}
\end{figure}

\subsection{Comparison to local interior point methods}
\label{sec:exp-comparison}

In Experiment~2, we compare gradient descent to solving the strong duality formulation directly using a local interior point solver.
Specifically, we apply the Ipopt solver to Problem~\eqref{eq:mep-sd}, comparing its objective value and runtime to running 3,000 iterations of gradient descent initialized at the solution to the convex relaxation~\eqref{eq:mep-relax} for problems with 100 nodes and 1, 4, and 16 scenarios.
For each number of scenarios, we sample 10 different sets of hourly data to obtain robust estimates of performance.
Because the interior point solver's runtime can vary significantly from case to case, we limit its runtime to 10 minutes times the number of scenarios.
If Ipopt has not converged, we use the best objective value  solution found so far, with a relatively high feasibility tolerance of $1.0$.

We show the results of this experiment in Figure~\ref{fig:comparison-ipopt}, normalizing objective values using the convex relaxation-based lower bound.
For 1 scenario problems, the interior point algorithm outperforms gradient descent both in objective value and runtime by a small margin.
For 4 scenario problems, the interior point algorithm and gradient descent find similar quality solutions, but gradient descent takes significantly less time.
Finally, for 16 scenario problems, gradient descent significantly outperforms the interior point algorithm both in objective value and runtime.
Note that the interior point algorithm is terminated early for every instance of the 16 scenario problem.
% the results on smaller cases suggest that given sufficient time, the interior point algorithm will likely find a similar quality solution to that of gradient descent.
This highlights the practical importance of developing planning algorithms that scale well to many scenarios.

\begin{figure}[t]
    \centering
    \includegraphics[width=6.5in]{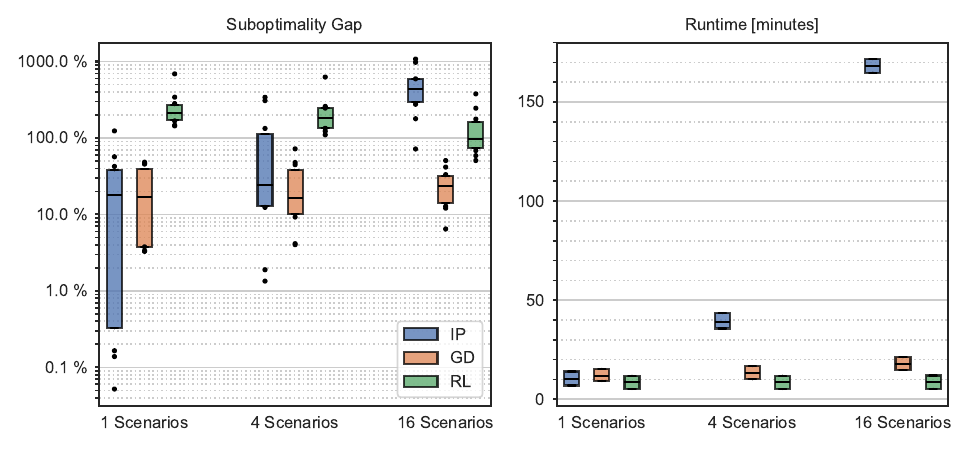}
    \caption{
        Performance comparison between a local interior point solver (IP), gradient descent (GD), and the solution to the relaxation in Section~\ref{sec:relax} (RL).
        (Left side) Box plots of final objective values (divided by the lower bound) for different cases with 1, 4, or 16 scenarios.
        Box bottom and top lines are the 25th and 75th percentiles, respectively, and midlines are medians.
        (Right side) Box plot of algorithm runtimes for different cases.
        For visibility, box bottom and top lines are set to be 10 minutes apart if the IQR is less than 10 minutes.
    }
    \label{fig:comparison-ipopt}
\end{figure}

\subsection{Computational scaling}
\label{sec:exp-scaling}

One of the primary advantages of using gradient descent to solve planning problems is that it scales well to large networks with many scenarios.
In Experiment~3, we verify this by empirically studying the computational time per iteration and, separately, the number of iterations required to converge to a stationary point.

\paragraph{Computational performance.}
First, we study the computational time per iteration for cases with network size varying from 200 to 1000 nodes, solving the resulting planning problems with a single scenario.
Each case is run 3 times for 500 iterations, and the per-iteration time averaged across the duration of the run.
We plot these results in Figure~\ref{fig:scaling} (left panel).
The per-iteration runtime appears to grow approximately linearly with network size.
This roughly agrees with theoretical intuition: the main computational bottleneck is factorizing the sparse KKT matrix while solving the dispatch model, which grows near-linearly with problem dimension for many structured matrices (see, for example,~\cite{Cohen2018-qf}).

Second, we study how the per-iteration computational time scales with number of scenarios.
Since we are able to parallelize the majority of the algorithm\,---\,solving the dispatch model and evaluating gradients for different scenarios\,---\,we expect a near-constant scaling on a machine with many processors.
We use a 100-node network and scale the number of scenarios from 1 to 32, again running each case 3 times.
Experiments are run with 64 threads on a 128 core (64 physical cores) cluster machine.
Therefore, in principle, runtime should be constant as a function of the number of scenarios, barring competing parallelism between linear algebra libraries and the solver.
The results are plotted in Figure~\ref{fig:scaling} (right panel).
For convenience, we plot worst (linear) and best (constant) case theoretical scaling in the same panel.
Results demonstrate that evaluating gradients in parallel is significantly faster than a serial implementation and achieves a near-constant scaling with the number of scenarios.

\begin{figure}
    \centering
    \includegraphics[width=\textwidth]{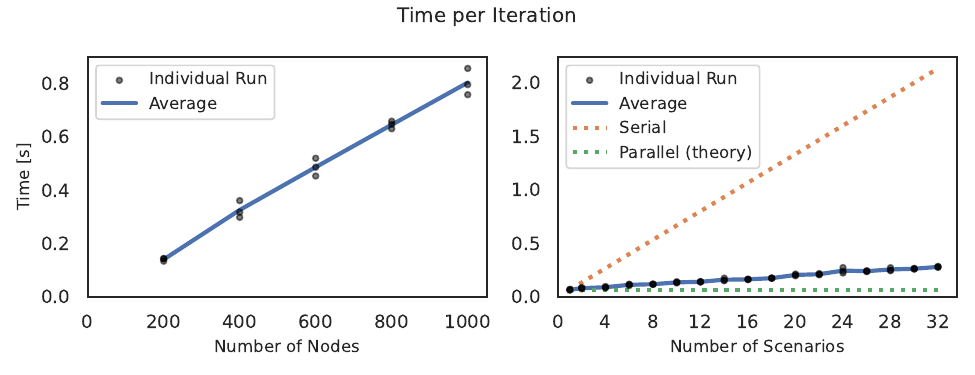}
    \caption{
        Per-iteration computational time for various cases.
        Black dots denote runtimes of individual cases and blue solid lines are averaged runtime.
        (Left) Time per iteration as a function of network size.
        (Right) Time per iteration as a function of number of scenarios.
        Serial performance (orange dotted line), i.e., linear scaling and ideal parallel performance (green dotted line), i.e., constant scaling, are plotted for reference.
        Gradient descent benefits significantly from parallelization.
    }
    \label{fig:scaling}
\end{figure}

\paragraph{Convergence rate.}
Finally, we study the number of iterations until convergence.
Here, we define convergence as when the best objective value so far decreases by less than 0.1\% over 100 iterations.
If the algorithm has not converged after 5,000 iterations, we terminate it early.
We consider a variety of cases as before; we vary the number of nodes from 200 to 1000 on single scenario cases, then vary the number of scenarios from 1 to 16 on 100 node cases.
For each network size, we solve 5 distinct scenarios separately to ensure our results are not an artifact of a particular time period;
for each number of scenarios, we also solve 10 distinct cases for the same reasons.
For each individual case, we select sweep step sizes from $10^{-3}$ to $10^{-1}$ and choose the step size with the best loss at convergence.
We display these results in Figure~\ref{fig:scaling-convergence}.
In general, we observe a slight increase in the number of iterations required for convergence as a function of either network size or number of scenarios.
This may suggest that larger cases have gradients with a larger Lipschitz constant.
In practice, we believe this could be addressed through more sophisticated problem preconditioning or non-constant step size schedules.

\begin{figure}[t]
    \centering
    \includegraphics[width=\textwidth]{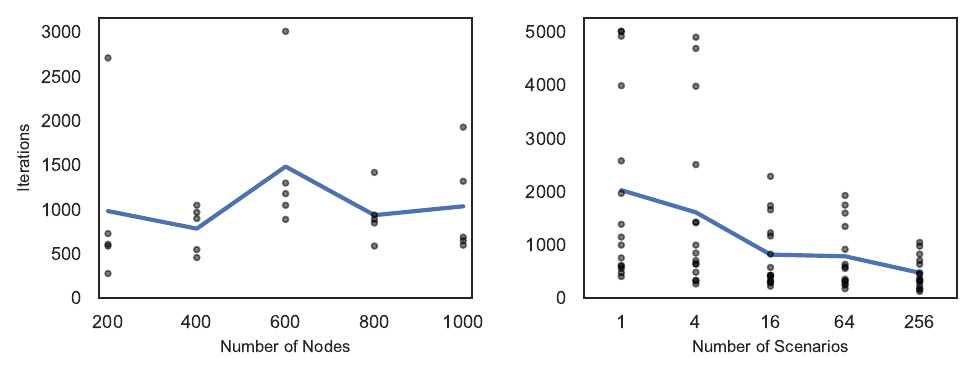}
    \caption{
        Iterations until convergence for different cases on the PyPSA-USA network.
        Black dots denote individual cases and blue solid lines are averages.
        (Left) Iterations until convergence as a function of network size.
        (Right) Iterations until convergence as a function of number of scenarios.
        Gradient descent appears to converge within a constant number of iterations, as suggested by the theoretical analysis in Section~\ref{sec:gradient-algorithm}.
        Faster convergence rates for large cases may be indicative of a shrinking Lipschitz constant.
    }
    \label{fig:scaling-convergence}
\end{figure}

\subsection{Large example}
\label{sec:exp-large}

In Experiment~4, we demonstrate the potential utility of our method by solving a large case study.
We now consider \textit{dynamic} dispatch models with multiple time periods.
Each dispatch problem is solved over $T = 24$ hours coupled by battery storage systems.
We then solve two expansion planning problems using $S = 32$ scenarios (768 hours total) using the 100-node PyPSA-USA network: an emissions-aware problem with a penalty of \$400 per metric ton CO$_2$ (as in the previous experiments) and a cost-only problem without a carbon penalty.
We specifically consider the 16 peak load days and the 16 peak renewable days as our scenarios.
In both problems we jointly plan transmission, generation, and storage.

Because we encounter memory issues when solving the entire problem, we use stochastic gradient descent as described in Algorithm~\ref{alg:stochastic-gradient-descent} with a batch size of $R = 4$.
Instead of sampling batches randomly, we randomly partition the scenarios into batches and iterate through each batch serially, as is standard practice in machine learning.
We set the step size to $0.01$ and run the algorithm for 1000 iterations.
In both cases, the algorithm converges after just a few hundred iterations.
The final objective values are within 2\% and 33\% of the lower bound for the cost-only and emissions-aware cases, respectively.

We plot the resulting transmission, storage, and generation capacity expansions for both cases in Figure~\ref{fig:large-capacity}.
As expected, the emissions-aware problem invests in significantly more renewable, transmission, and storage capacity.
In order to understand each solution qualitatively, we also plot the generation dispatch for the scenario with the highest peak load in Figure~\ref{fig:large-schedules}.
The results show that greater investment in both transmission and storage enable higher renewable penetration, reducing the need for gas capacity.
We reemphasize that these conclusions are not to be interpreted as actual forecasts or policy suggestions for the Western U.S., just as a demonstration of the potential applicability of our methods to large scale case studies.

Quantitatively, the cost of investment is higher in the emissions-aware case: $\$44.3 /\textrm{MWh}$ as opposed to $\$15.9/\textrm{MWh}$ in the cost-only case.
The greater investment in renewable capacity leads to a significant reduction in operation cost ($\$5.0/\textrm{MWh}$ versus $\$16.3/\textrm{MWh}$) and carbon intensity ($47.4\ \textrm{kgCO}_2 / \textrm{MWh}$ versus $186.8\ \textrm{kgCO}_2 / \textrm{MWh}$).
Both cases improve upon the 2019 base case carbon intensity of $348.3\ \textrm{kgCO}_2 / \textrm{MWh}$, but the emissions-aware case achieves an additional $40.0\%$ reduction in carbon intensity at an additional $\$17.1/\textrm{MWh}$ in total cost.

\begin{figure}
    \centering
    \includegraphics[width=6.5in]{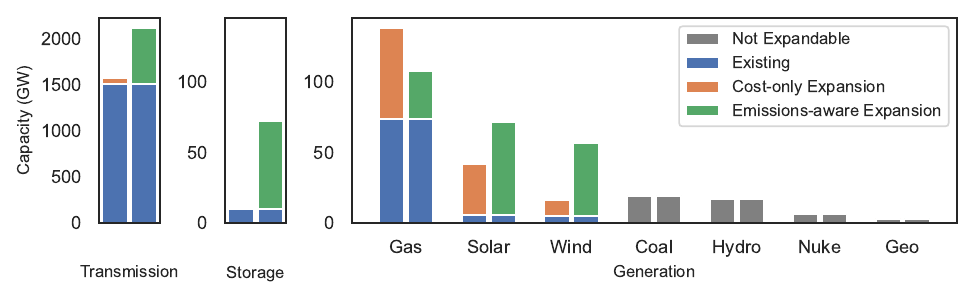}
    \caption{
        Expanded capacity for transmission, storage, and generation using cost-only (orange) and emissions-aware (green) expansion planning models.
        Initial capacities are shown in blue, and non-expandable resources in gray.
        Emissions-aware expansion leads to significantly more transmission, storage, and generation investment, even in a market dispatched solely on cost.
    }
    \label{fig:large-capacity}
\end{figure}

\begin{figure}
    \centering    \includegraphics[width=6.5in]{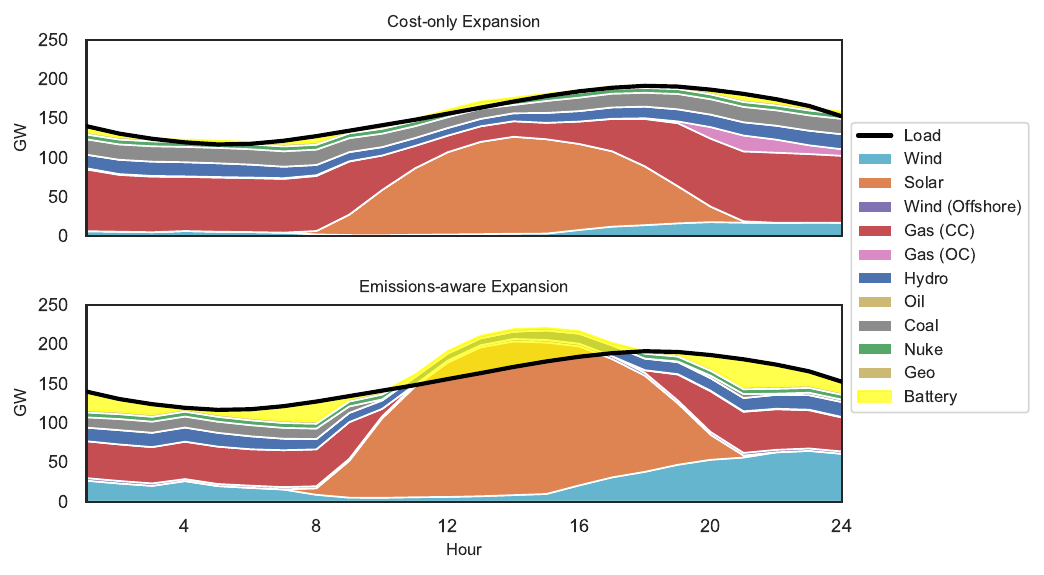}
    \caption{
        Generator outputs on the peak summer load day using new capacities from the cost-only and emissions-aware expansions.
        (Black line) Total hourly load across the network.
        (Shaded solid regions) Generator outputs by resource type.
        (Transparent yellow region) Load shift from battery charging or discharging.
        Cost-only investment reduces carbon intensity by 46.4\% relative to the 2019 baseline, whereas emissions-aware investment leads to a 86.4\% reduction through greater renewable, transmission, and storage investment.
    }
    \label{fig:large-schedules}
\end{figure}

% Initial Solution
% =============
% Average Load: 				130.0 GW
% Average Lost Load: 			20.0 GW
% Investment: 				-0.0064 $/MWh
% Operation: 					30.43 $/MWh
% Carbon: 					348.34 kgco2/MWh
% Full Cost: 					259.77 $/MWh
% =============

% Cost-only Expansion
% =============
% Average Load: 				130.0 GW
% Average Lost Load: 			0.016 GW
% Investment: 				15.865 $/MWh
% Operation: 					16.288 $/MWh
% Carbon: 					186.84 kgco2/MWh
% Full Cost: 					32.215 $/MWh
% =====================================

% Emissions-aware Expansion
% =============
% Average Load: 				130.0 GW
% Average Lost Load: 			7.1e-7 GW
% Investment: 				44.336 $/MWh
% Operation: 					4.9942 $/MWh
% Carbon: 					47.365 kgco2/MWh
% Full Cost: 					68.276 $/MWh
% =====================================

\section{Conclusion}

We consider multi-value expansion planning, a general bilevel optimization framework for jointly optimizing generation, transmission, and storage assets in settings where the planner's objective may differ from the objective used to dispatch power.
We give several examples of multi-value expansion planning, such as reducing emissions, even in the absence of a carbon tax, or maximizing renewable profits.
In general, this problem is high dimensional, nonconvex, and may need to be solved across many scenarios.
Therefore, we propose applying gradient descent to the implicit form of multi-value expansion planning, which converges to a local stationary point in polynomial time.
We demonstrate, both theoretically and empirically, that this method is scalable to high dimensional problems with many scenarios.
Moreover, we show gradient descent outperforms commonly used strong duality-based reformulation methods even on modest sized cases.

One strength of our work is that the multi-value formulation is fairly general and can be extended to a variety of related expansion planning problems.
For example, we can consider planning electricity system expansions jointly with other resources, e.g., gas or water, with no change to our gradient algorithm other than the implementation of $x^*(\eta)$ and $\partial x^*(\eta)$.
We can also consider more advanced variants of expansion planning, such as problems with multiple investment periods.

% We note three important areas for future work.
% First, the solution to the dispatch model changes very little between iterations.
% Warm start methods could potential exploit this to greatly reduce the per-iteration computation time.
% Second, the solution to multi-value expansion planning itself may only change a little when modifying various assumptions and objectives in the problem.
% Therefore, we believe this method could be suitable to exploring spaces of near-optimal solutions~\cite{Neumann2021-hp} and performing sensitivity analyses.
% Finally, the proposed method is designed specifically to solve continuous expansion planning problems.
% Future work exploring modifications for problems with integer constraints is needed.

\section*{Acknowledgements}

This research used resources of the National Energy Research Scientific Computing Center (NERSC), a U.S.\ Department of Energy Office of Science User Facility located at Lawrence Berkeley National Laboratory, operated under Contract No. DE-AC02-05CH11231 using NERSC award DDR-ERCAP0026889.

This material is based upon work supported by the U.S.\ Department of Energy, Office of Science, Office of Advanced Scientific Computing Research, Department of Energy Computational Science Graduate Fellowship under Award Number DE-SC0021110.

\bibliography{main}

\begin{thebibliography}{10}

\bibitem{Larson2021-xg}
Eric~D Larson, Chris Greig, Jesse~D Jenkins, Erin~N Mayfield, Stephen~W
  Pascala, and {others}.
\newblock Net-zero america: Potential pathways, infrastructure, and impacts:
  Final report.
\newblock {\em Princeton University}, 2021.

\bibitem{Joskow2020-ua}
Paul~L Joskow.
\newblock Transmission capacity expansion is needed to decarbonize the
  electricity sector efficiently.
\newblock {\em Joule}, 4(1):1--3, January 2020.

\bibitem{Brown2021-ks}
Patrick~R Brown and Audun Botterud.
\newblock The value of inter-regional coordination and transmission in
  decarbonizing the {US} electricity system.
\newblock {\em Joule}, 5(1):115--134, January 2021.

\bibitem{Brown2018-yh}
Tom Brown, David Schlachtberger, Alexander Kies, Stefan Schramm, and Martian
  Greiner.
\newblock Synergies of sector coupling and transmission reinforcement in a
  cost-optimised, highly renewable european energy system.
\newblock {\em Energy}, 160:720--739, October 2018.

\bibitem{Joskow2021-zo}
Paul~L Joskow.
\newblock Facilitating transmission expansion to support efficient
  decarbonization of the electricity sector.
\newblock {\em Economics of Energy \& Environmental Policy}, 10(2), 2021.

\bibitem{Awad2010-bt}
Mohamed~L Awad, Keith~E Casey, Anna~S Geevarghese, Jeffrey~C Miller, A~Farrokh
  Rahimi, Anjali~Y Sheffrin, Mingxia Zhang, Eric Toolson, Glenn Drayton,
  Benjamin~F Hobbs, and F~A Wolak.
\newblock Using market simulations for economic assessment of transmission
  upgrades: Application of the california {ISO} approach.
\newblock {\em Restructured Electric Power Systems}, pages 241--270, 2010.

\bibitem{Garces2009-xp}
Lina~P Garces, Antonio~J Conejo, Raquel Garcia-Bertrand, and Ruben Romero.
\newblock A bilevel approach to transmission expansion planning within a market
  environment.
\newblock {\em IEEE Transactions on Power Systems}, 24(3):1513--1522, August
  2009.

\bibitem{Zare2019-fv}
M~Hosein Zare, Juan~S Borrero, Bo~Zeng, and Oleg~A Prokopyev.
\newblock A note on linearized reformulations for a class of bilevel linear
  integer problems.
\newblock {\em Annals of Operations Research}, 272(1-2):99--117, January 2019.

\bibitem{LeCun2015-qc}
Yann LeCun, Yoshua Bengio, and Geoffrey Hinton.
\newblock Deep learning.
\newblock {\em Nature}, 521(7553):436--444, May 2015.

\bibitem{Koltsaklis2018-wx}
Nikolaos~E Koltsaklis and Athanasios~S Dagoumas.
\newblock State-of-the-art generation expansion planning: A review.
\newblock {\em Applied energy}, 230:563--589, November 2018.

\bibitem{Mahdavi2019-ev}
Meisam Mahdavi, Carlos Sabillon~Antunez, Majid Ajalli, and Ruben Romero.
\newblock Transmission expansion planning: Literature review and
  classification.
\newblock {\em IEEE systems journal}, 13(3):3129--3140, September 2019.

\bibitem{Sheibani2018-aw}
Mohammad~Reza Sheibani, Gholam~Reza Yousefi, Mohammad~Amin Latify, and Sarineh
  Hacopian~Dolatabadi.
\newblock Energy storage system expansion planning in power systems: a review.
\newblock {\em IET renewable power generation}, 12(11):1203--1221, August 2018.

\bibitem{Gonzalez-Romero2020-zp}
Isaac-Camilo Gonzalez-Romero, Sonja Wogrin, and Tomas Gómez.
\newblock Review on generation and transmission expansion co‐planning models
  under a market environment.
\newblock {\em IET Generation, Transmission and Distribution}, 14(6):931--944,
  March 2020.

\bibitem{Sauma2006-ky}
Enzo~E Sauma and Shmuel~S Oren.
\newblock Proactive planning and valuation of transmission investments in
  restructured electricity markets.
\newblock {\em Journal of regulatory economics}, 30(3):358--387, December 2006.

\bibitem{Pozo2013-gr}
David Pozo, Javier Contreras, and Enzo Sauma.
\newblock If you build it, he will come: Anticipative power transmission
  planning.
\newblock {\em Energy economics}, 36:135--146, March 2013.

\bibitem{Pozo2013-iz}
David Pozo, Enzo~E Sauma, and Javier Contreras.
\newblock A three-level static {MILP} model for generation and transmission
  expansion planning.
\newblock {\em IEEE transactions on power systems : a publication of the Power
  Engineering Society}, 28(1):202--210, February 2013.

\bibitem{Pozo2017-kz}
David Pozo, Enzo Sauma, and Javier Contreras.
\newblock Basic theoretical foundations and insights on bilevel models and
  their applications to power systems.
\newblock {\em Annals of operations research}, 254(1-2):303--334, July 2017.

\bibitem{Munoz2012-ge}
Francisco~D Munoz, Benjamin~F Hobbs, and Saamrat Kasina.
\newblock Efficient proactive transmission planning to accommodate renewables.
\newblock In {\em 2012 IEEE Power and Energy Society General Meeting}, pages
  1--7. IEEE, July 2012.

\bibitem{Baringo2011-xy}
Luis Baringo and Antonio~J Conejo.
\newblock Wind power investment within a market environment.
\newblock {\em Applied Energy}, 88(9):3239--3247, September 2011.

\bibitem{Maurovich-Horvat2015-qu}
Lajos Maurovich-Horvat, Trine~K Boomsma, and Afzal~S Siddiqui.
\newblock Transmission and wind investment in a deregulated electricity
  industry.
\newblock {\em IEEE Transactions on Power Systems}, 30(3):1633--1643, May 2015.

\bibitem{Akbari2016-ja}
Tohid Akbari and Mohammad~Tavakoli Bina.
\newblock Merchant transmission capacity investment: A mathematical program
  with equilibrium constraint formulation.
\newblock {\em Electric Power Components \& Systems}, 44(1):82--89, January
  2016.

\bibitem{Xu2018-jr}
Bolun Xu.
\newblock {\em Batteries in Electricity Markets: Economic Planning and
  Operations}.
\newblock PhD thesis, University of Washington, 2018.

\bibitem{Degleris2021-qg}
Anthony Degleris, Lucas~Fuentes Valenzuela, Abbas El~Gamal, and Ram Rajagopal.
\newblock Emissions-aware electricity network expansion planning via implicit
  differentiation.
\newblock In {\em NeurIPS 2021 Workshop on Tackling Climate Change with Machine
  Learning}. Climate Change AI, 2021.

\bibitem{Pineda2018-ry}
Salvador Pineda, Henrik Bylling, and Juan~M Morales.
\newblock Efficiently solving linear bilevel programming problems using
  off-the-shelf optimization software.
\newblock {\em Optimization and engineering}, 19(1):187--211, March 2018.

\bibitem{Constante-Flores2022-hm}
Gonzalo Constante-Flores, Antonio~J Conejo, and S~Constante-Flores.
\newblock Solving certain complementarity problems in power markets via convex
  programming.
\newblock {\em Top}, 30(3):465--491, October 2022.

\bibitem{Garcia-Herreros2016-vk}
Pablo Garcia-Herreros, Lei Zhang, Pratik Misra, Erdem Arslan, Sanjay Mehta, and
  Ignacio~E Grossmann.
\newblock Mixed-integer bilevel optimization for capacity planning with
  rational markets.
\newblock {\em Computers \& chemical engineering}, 86:33--47, March 2016.

\bibitem{Zhang2017-wn}
Xiaohu Zhang, Di~Shi, Zhiwei Wang, Zhe Yu, Xinan Wang, Desong Bian, and Kevin
  Tomsovic.
\newblock Bilevel optimization based transmission expansion planning
  considering phase shifting transformer.
\newblock In {\em 2017 North American Power Symposium (NAPS)}, pages 1--6,
  September 2017.

\bibitem{Pedregosa2016-rr}
Fabian Pedregosa.
\newblock Hyperparameter optimization with approximate gradient.
\newblock {\em arXiv [stat.ML]}, pages 737--746, February 2016.

\bibitem{Lorraine2019-ja}
Jonathan Lorraine, Paul Vicol, and D~Duvenaud.
\newblock Optimizing millions of hyperparameters by implicit differentiation.
\newblock {\em International Conference on Artificial Intelligence and
  Statistics}, 108:1540--1552, November 2019.

\bibitem{Yang2022-ep}
Ling Yang, Zhilong Zhang, Yang Song, Shenda Hong, Runsheng Xu, Yue Zhao, Wentao
  Zhang, Bin Cui, and Ming-Hsuan Yang.
\newblock Diffusion models: A comprehensive survey of methods and applications.
\newblock {\em arXiv [cs.LG]}, September 2022.

\bibitem{Dontchev2014-ck}
Asen~L Dontchev and R~Tyrrell~Rockafellar.
\newblock {\em Implicit functions and solution mappings: A view from
  variational analysis}, volume 616 of {\em Springer Series in Operations
  Research and Financial Engineering}.
\newblock Springer, New York, NY, 2 edition, June 2014.

\bibitem{Bubeck2015-hz}
Sébastien Bubeck.
\newblock Convex optimization: Algorithms and complexity.
\newblock {\em Foundations and Trends® in Machine Learning}, 8(3-4):231--357,
  2015.

\bibitem{Boyd2004-tx}
Stephen Boyd and Lieven Vandenberghe.
\newblock {\em Convex Optimization}.
\newblock Cambridge University Press, Cambridge, England, March 2004.

\bibitem{Pereira1985-zo}
Mario V~F Pereira, Leontina~Mvg Pinto, Sergio H~F Cunha, and Gerson~C Oliveira.
\newblock A decomposition approach to automated generation/transmission
  expansion planning.
\newblock {\em IEEE Transactions on Power Apparatus and Systems},
  PER-5(11):30--31, November 1985.

\bibitem{Gribik2007-bi}
Paul Gribik, William~W Hogan, and Susan~L Pope.
\newblock Market-clearing electricity prices and energy uplift.
\newblock Technical report, Harvard Electricity Policy Group, 2007.

\bibitem{Garver1970-vi}
Len Garver.
\newblock Transmission network estimation using linear programming.
\newblock {\em IEEE transactions on power apparatus and systems},
  PAS-89(7):1688--1697, September 1970.

\bibitem{Romero1994-si}
Ruben Romero and Alcir Monticelli.
\newblock A hierarchical decomposition approach for transmission network
  expansion planning.
\newblock {\em IEEE transactions on power systems : a publication of the Power
  Engineering Society}, 9(1):373--380, February 1994.

\bibitem{Trefethen1997-fv}
Lloyd~N Trefethen and David Bau, III.
\newblock {\em Numerical Linear Algebra}.
\newblock Society for Industrial and Applied Mathematics, Philadelphia, PA,
  January 1997.

\bibitem{Donti2021-ag}
Priya~L Donti, Aayushya Agarwal, Neeraj~Vijay Bedmutha, Larry Pileggi, and
  J~Zico Kolter.
\newblock Adversarially robust learning for security-constrained optimal power
  flow.
\newblock {\em arXiv [math.OC]}, November 2021.

\bibitem{Pereira1985-sn}
Mario Pereira and Leontina Pinto.
\newblock Application of sensitivity analysis of load supplying capability to
  interactive transmission expansion planning.
\newblock {\em IEEE transactions on power apparatus and systems},
  PAS-104(2):381--389, February 1985.

\bibitem{Ruiz2010-mu}
Pablo Ruiz and Aleksandr Rudkevich.
\newblock Analysis of marginal carbon intensities in constrained power
  networks.
\newblock {\em Hawaii International Conference on System Sciences}, pages 1--9,
  January 2010.

\bibitem{Valenzuela2023-qp}
Lucas~Fuentes Valenzuela, Anthony Degleris, Abbas El~Gamal, Marco Pavone, and
  Ram Rajagopal.
\newblock Dynamic locational marginal emissions via implicit differentiation.
\newblock {\em IEEE transactions on power systems : a publication of the Power
  Engineering Society}, 39:1138--1147, February 2023.

\bibitem{Bottou2010-as}
Léon Bottou.
\newblock Large-scale machine learning with stochastic gradient descent.
\newblock In {\em Proceedings of COMPSTAT'2010}, pages 177--186, Heidelberg,
  2010. Physica-Verlag HD.

\bibitem{Raghavan1987-on}
Prabhakar Raghavan and Clark~D Tompson.
\newblock Randomized rounding: A technique for provably good algorithms and
  algorithmic proofs.
\newblock {\em Combinatorica. An International Journal on Combinatorics and the
  Theory of Computing}, 7(4):365--374, December 1987.

\bibitem{Kleinert2023-eg}
Thomas Kleinert and Martin Schmidt.
\newblock Why there is no need to use a big-{M} in linear bilevel optimization:
  a computational study of two ready-to-use approaches.
\newblock {\em Computational management science}, 20(1):1--12, December 2023.

\bibitem{Dempe2002-ef}
Stephan Dempe.
\newblock {\em Foundations of bilevel programming}.
\newblock Nonconvex Optimization and Its Applications. Springer, New York, NY,
  2002 edition, May 2002.

\bibitem{Wachter2006-ig}
Andreas Wächter and Lorenz~T Biegler.
\newblock On the implementation of an interior-point filter line-search
  algorithm for large-scale nonlinear programming.
\newblock {\em Mathematical programming}, 106(1):25--57, March 2006.

\bibitem{Al-Khayyal1983-io}
Faiz~A Al-Khayyal and James~E Falk.
\newblock Jointly constrained biconvex programming.
\newblock {\em Mathematics of operations research}, 8(2):273--286, May 1983.

\bibitem{Borraz-Sanchez2016-up}
Conrado Borraz-Sánchez, Russell Bent, Scott Backhaus, Hassan Hijazi, and
  Pascal Van~Hentenryck.
\newblock Convex relaxations for gas expansion planning.
\newblock {\em INFORMS journal on computing}, 28(4):645--656, November 2016.

\bibitem{Ploussard2017-ph}
Quentin Ploussard, Luis Olmos, and Andres Ramos.
\newblock An operational state aggregation technique for transmission expansion
  planning based on line benefits.
\newblock {\em IEEE transactions on power systems : a publication of the Power
  Engineering Society}, 32(4):2744--2755, July 2017.

\bibitem{Goodarzi2019-ai}
Shahin Goodarzi, Mohsen Gitizadeh, and Ali Reza~Abbasi.
\newblock Efficient linear network model for {TEP} based on piecewise
  {McCormick} relaxation.
\newblock {\em IET Generation, Transmission and Distribution},
  13(23):5404--5412, December 2019.

\bibitem{Lenz2022-cz}
Ralf Lenz and Felipe Serrano.
\newblock Tight convex relaxations for the expansion planning problem.
\newblock {\em Journal of optimization theory and applications},
  194(1):325--352, July 2022.

\bibitem{Bezanson2017-lz}
Jeff Bezanson, Alan Edelman, Stefan Karpinski, and Viral~B Shah.
\newblock Julia: A fresh approach to numerical computing.
\newblock {\em SIAM Review}, 59(1):65--98, January 2017.

\bibitem{Udell2014-jq}
Madeleine Udell, Karanveer Mohan, David Zeng, Jenny Hong, Steven Diamond, and
  Stephen Boyd.
\newblock Convex optimization in julia.
\newblock In {\em 2014 First Workshop for High Performance Technical Computing
  in Dynamic Languages}, pages 18--28. ieeexplore.ieee.org, November 2014.

\bibitem{MOSEK-ApS2022-cf}
{MOSEK ApS}.
\newblock Mosek.jl, 2022.

\bibitem{Innes2018-he}
Michael Innes.
\newblock Don't unroll adjoint: Differentiating {SSA}-form programs.
\newblock {\em CoRR}, abs/1810.07951, October 2018.

\bibitem{Dunning2017-gm}
Iain Dunning, Joey Huchette, and Miles Lubin.
\newblock {JuMP}: A modeling language for mathematical optimization.
\newblock {\em SIAM Review}, 59(2):295--320, January 2017.

\bibitem{Tehranchi2023-jo}
Kamran Tehranchi, Trevor Barnes, and Ines Azevedo.
\newblock {PyPSA}-{USA}: An open-source optimisation model of the {US} power
  system, 2023.

\bibitem{Cohen2018-qf}
Michael~B Cohen, Jonathan Kelner, Rasmus Kyng, John Peebles, Richard Peng,
  Anup~B Rao, and Aaron Sidford.
\newblock Solving directed laplacian systems in nearly-linear time through
  sparse {LU} factorizations.
\newblock In {\em 2018 IEEE 59th Annual Symposium on Foundations of Computer
  Science (FOCS)}, pages 898--909. IEEE, October 2018.

\bibitem{Zhou2018-to}
Xingyu Zhou.
\newblock On the fenchel duality between strong convexity and lipschitz
  continuous gradient.
\newblock {\em arXiv [math.OC]}, March 2018.

\bibitem{Poljak1987-wz}
Boris~T Poljak.
\newblock {\em Introduction to optimization}.
\newblock Optimization Software, 1987.

\bibitem{Stewart1969-br}
G~W Stewart.
\newblock On the continuity of the generalized inverse.
\newblock {\em SIAM journal on applied mathematics}, 17(1):33--45, January
  1969.

\bibitem{Ryu2016-qe}
Ernest~K Ryu and Stephen Boyd.
\newblock Primer on monotone operator methods.
\newblock {\em Applied and Computational Mathematics}, 15(1):3--43, 2016.

\bibitem{Danish-Energy-Agency2018-qn}
{Danish Energy Agency} and {Energinet}.
\newblock Technology data - energy storage.
\newblock Technical report, Danish Energy Agency, 2018.

\end{thebibliography}

\newpage
\appendix

\section{Technical results on gradient descent}

\subsection{Definitions}

\begin{definition*}[Operator norm]
    The \textit{operator norm} of a matrix $A \in \reals^{M \times N}$ is
    \begin{equation*}
        \| A \|_{\op} = \sup_{\| x \|_2 \leq 1}\ \| A x \|_2.
    \end{equation*}
\end{definition*}

\begin{definition*}[Lipschitz continuity]
    A function $f : \Omega \rightarrow \reals^M$ is \textit{$L$-Lipschitz} continuous if for all $x, y \in \Omega$,
    \begin{equation*}
        \| f(x) - f(y) \|_2 \leq L \| x - y \|_2.
    \end{equation*}
    Similarly, a matrix-valued function $A : \Omega \rightarrow \reals^{M \times N}$ is $L$-Lipschitz continuous if for all $x, y \in \Omega$,
    \begin{equation*}
        \| A(x) - A(y) \|_{\op} \leq L \| x - y \|_2.
    \end{equation*}
\end{definition*}

\subsection{Unique dual map}
\label{appendix:unique-dual}

\begin{lemma}
    The solution to the dual dispatch model~\eqref{eq:dual-dispatch} is unique.
\end{lemma}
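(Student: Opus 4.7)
The plan is to combine the uniqueness of the primal solution (from strong convexity), the KKT optimality conditions, and the three structural assumptions on the active set to pin down $\lambda^*$ component by component.

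First, I would invoke Assumption~\ref{assume:feasible} (Slater) together with convexity of~\eqref{eq:dispatch} to conclude that strong duality holds and that a dual optimum $\lambda^* \geq 0$ exists; then Assumption~\ref{assume:convex-diff} guarantees that the primal optimum $x^*(\eta)$ is unique. This fixes the active set $\mathcal{B}(\eta) = \{m_1(\eta), \ldots, m_p(\eta)\}$ as a well-defined object, so that Assumptions~\ref{assume:non-redundant-constraints} and~\ref{assume:strict-complement} can be applied unambiguously.

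Next, I would show that the non-binding components of any dual optimum are pinned to zero. Let $\lambda^*$ and $\tilde\lambda^*$ be two dual optima. By complementary slackness, for any $m \notin \mathcal{B}(\eta)$ we have $(A(\eta) x^*(\eta) - b(\eta))_m < 0$, which forces $\lambda_m^* = \tilde\lambda_m^* = 0$. So any two dual optima can differ only on the binding indices $m \in \mathcal{B}(\eta)$.

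For the binding components, I would use the stationarity condition
\begin{equation*}
    \nabla c(x^*(\eta)) + A(\eta)^T \lambda^* = 0,
\end{equation*}
which, after restricting to the binding rows, reads $\tilde A_{\mathcal{B}}(\eta)^T \lambda_{\mathcal{B}}^* = -\nabla c(x^*(\eta))$, where $\tilde A_{\mathcal{B}}(\eta)$ is the submatrix of $A(\eta)$ consisting of the rows $\tilde a_{m_1(\eta)}(\eta), \ldots, \tilde a_{m_p(\eta)}(\eta)$. Assumption~\ref{assume:non-redundant-constraints} says these rows are linearly independent, so $\tilde A_{\mathcal{B}}(\eta)^T$ has full column rank and the linear system admits at most one solution in $\lambda_{\mathcal{B}}^*$. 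Combined with the previous paragraph, this shows that $\lambda^* = \tilde\lambda^*$, and Assumption~\ref{assume:strict-complement} is consistent with this unique solution having strictly positive binding components.

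The main obstacle is really a bookkeeping one: making sure the active set $\mathcal{B}(\eta)$ is characterized purely in terms of the (unique) primal optimum, so that the hypotheses on linear independence and strict positivity refer to the same index set for every candidate dual optimum. Once that is set up, the uniqueness argument reduces to the standard observation that a consistent linear system with a full-column-rank coefficient matrix has a unique solution.
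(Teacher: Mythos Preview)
Your argument is correct and follows essentially the same skeleton as the paper's proof: fix the non-binding components to zero via complementary slackness, then use Assumption~\ref{assume:non-redundant-constraints} (LICQ on the active set) to conclude that the binding components are uniquely determined by a full-column-rank linear system. The one place you differ is in how you pin down $A(\eta)^T\lambda^*$: you read it off directly from the primal KKT stationarity condition $\nabla c(x^*(\eta)) + A(\eta)^T\lambda^* = 0$, whereas the paper argues that $c^\star$ is strongly convex (since $\nabla c$ is Lipschitz), so the dual objective is strictly concave as a function of $A(\eta)^T\lambda$, forcing any two dual optima to satisfy $A(\eta)^T\lambda_1^* = A(\eta)^T\lambda_2^*$. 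Your route is slightly more elementary, needing only differentiability of $c$ rather than properties of the conjugate, and it makes the role of the unique primal optimum more explicit; the paper's route has the advantage of working purely on the dual side. Either way, Assumption~\ref{assume:strict-complement} is not actually used for uniqueness, and your remark that it is merely ``consistent with'' the conclusion is accurate.
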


\begin{proof}
    By Assumption~\ref{assume:convex-diff}, the conjugate function $c^\star$ is both differentiable with $(1/\epsilon)$-Lipschitz continuous gradient and $(1/L)$-strongly convex~\cite{Zhou2018-to}.
    Recall the objective of~\eqref{eq:dual-dispatch} is,
    \begin{equation*}
        u(\lambda) = -\lambda^T b(\eta) - c^\star(-A(\eta)^T \lambda).
    \end{equation*}
    Now consider two optimal solutions $\lambda_1^*, \lambda_2^* \in \reals^M$.
    Since $-c^\star$ is strongly concave, it must be that
    \begin{equation}
        \label{eq:A-lambda}
        A(\eta)^T \lambda_1^* = A(\eta)^T \lambda_2^*.
    \end{equation}
    By complementary slackness, $(\lambda_i)^*_m > 0$ only if the constraint $\tilde a(\eta)^T x^*(\eta) \leq b(\eta)$ is binding.
    Assumption~\ref{assume:non-redundant-constraints} tells us that these rows of $A(\eta)$ are linearly independent, so the sub-matrix of $A(\eta)^T$ corresponding to the non-zero entries of $\lambda_1^*$ and $\lambda_2^*$ has linearly independent columns.
    Therefore,~\eqref{eq:A-lambda} implies that $\lambda_1^* = \lambda_2^*$, i.e., the solution is unique.
\end{proof}

\subsection{Gradient descent convergence}
\label{appendix:gradient-descent}

Theorem~\ref{thm:gradient-descent} is a restatement of the following result.

\begin{theorem}[Gradient descent convergence, full]
\label{thm:gradient-descent-full}
    Let $h$ have Lipschitz continuous gradient.
    Then $J(\eta)$ has Lipschitz continuous gradient with Lipschitz constant $L > 0$ and a lower bound $J^* \leq J(\eta)$ for all $\eta \in \mathcal H$.
    Now suppose Algorithm~\ref{alg:gradient-descent} is run with $\alpha \leq 1 / L$.
    Then there exists an iteration
    \begin{equation*}
        i \leq \left\lceil \frac{2 (J(\eta^1) - J^*)}{L \epsilon^2} \right\rceil
    \end{equation*}
    such that $\eta^i$ is $\epsilon$-stationary,
    \begin{equation*}
        \| \eta^i - \proj_{\mathcal H}\left( \eta^i - \alpha \nabla J(\eta^i) \right) \|_2 \leq \epsilon.
    \end{equation*}
\end{theorem}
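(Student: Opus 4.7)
The plan is to decompose the claim into three pieces: (i) the objective $J$ has Lipschitz continuous gradient on (a neighborhood of) $\mathcal{H}$ with some constant $L$, (ii) $J$ is bounded below by $J^* := \min_{\eta \in \mathcal{H}} J(\eta)$, and (iii) for step size $\alpha \leq 1/L$, a standard descent-lemma telescoping argument delivers the stated iteration bound. Pieces (ii) and (iii) are routine once (i) is established.

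For (i), I would start from the implicit-gradient formula $\nabla J(\eta) = \gamma + \partial z^*(\eta)^T \nabla h(z^*(\eta))$ given by Theorem~\ref{thm:ift}. To bound $\|\nabla J(\eta_1) - \nabla J(\eta_2)\|_2$ by a constant times $\|\eta_1 - \eta_2\|_2$, it suffices to show that $\eta \mapsto \partial z^*(\eta)$ and $\eta \mapsto \nabla h(z^*(\eta))$ are both Lipschitz and bounded (in operator and Euclidean norm, respectively) on $\mathcal{H}$. Using the closed-form expression $\partial z^*(\eta) = -\partial_1 \kappa(z^*(\eta),\eta)^{-1} \, \partial_2 \kappa(z^*(\eta),\eta)$, I would argue in sequence that (a) $z^*$ is continuous on $\mathcal{H}$ and its image lies in the compact set $\mathcal{X} \times \Lambda$ by Assumption~\ref{assume:bounded-domain}; (b) $\kappa$ is $C^1$ with Lipschitz derivatives on $\mathcal{X} \times \Lambda \times \mathcal{H}$, using the $C^{2,1}$ regularity of $c$ from Assumption~\ref{assume:convex-diff} and the affine form of $A$ and $b$ from Assumption~\ref{assume:affine-param}; (c) $\partial_1 \kappa$ is invertible pointwise on $\mathcal{H}$ as a consequence of Assumptions~\ref{assume:convex-diff},~\ref{assume:non-redundant-constraints}, and~\ref{assume:strict-complement} (strong convexity provides the needed block-positive-definiteness for the primal block, and strict complementarity together with linear independence of active constraints handle the dual block); (d) by compactness of $\mathcal{H}$ and continuity, the smallest singular value of $\partial_1 \kappa(z^*(\eta),\eta)$ is uniformly bounded away from zero; and (e) matrix inversion is smooth on the set of matrices bounded away from singularity. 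Combining (a)--(e) yields a Lipschitz modulus $L_1$ for $\partial z^*$, and coupling this with the Lipschitz gradient of $h$ (via the chain rule and boundedness of $\partial z^*$ and $\nabla h \circ z^*$) produces a finite global Lipschitz constant $L$ for $\nabla J$.

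For (ii), continuity of $J$ on the compact set $\mathcal{H}$ yields the lower bound $J^* > -\infty$. For (iii), I would apply the descent lemma for $L$-smooth functions: $J(\eta^{(i+1)}) \leq J(\eta^\iter) + \nabla J(\eta^\iter)^T(\eta^{(i+1)}-\eta^\iter) + (L/2)\|\eta^{(i+1)}-\eta^\iter\|_2^2$. The first-order optimality condition for the projection $\eta^{(i+1)} = \proj_{\mathcal{H}}(\eta^\iter - \alpha \nabla J(\eta^\iter))$, tested against the feasible point $y = \eta^\iter$, gives $\nabla J(\eta^\iter)^T (\eta^{(i+1)} - \eta^\iter) \leq -(1/\alpha)\|\eta^{(i+1)}-\eta^\iter\|_2^2$. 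Substituting and using $\alpha \leq 1/L$ produces $J(\eta^{(i+1)}) \leq J(\eta^\iter) - (L/2)\|\eta^{(i+1)}-\eta^\iter\|_2^2$. Telescoping over $i = 1, \ldots, N$ and using $J \geq J^*$ bounds $\sum_i \|\eta^{(i+1)}-\eta^\iter\|_2^2 \leq 2(J(\eta^{(1)}) - J^*)/L$, so some iterate satisfies $\|\eta^{(i+1)}-\eta^\iter\|_2^2 \leq 2(J(\eta^{(1)}) - J^*)/(LN)$. Setting this at most $\epsilon^2$ and using $\|\eta^\iter - \proj_{\mathcal{H}}(\eta^\iter - \alpha \nabla J(\eta^\iter))\|_2 = \|\eta^{(i+1)}-\eta^\iter\|_2$ yields the claimed iteration count.

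The main obstacle is the uniform invertibility of $\partial_1 \kappa$ across all of $\mathcal{H}$ in step (d), rather than merely pointwise. The pointwise invertibility at each $\eta$ follows cleanly from Assumptions~\ref{assume:non-redundant-constraints} and~\ref{assume:strict-complement}, but a uniform spectral bound has to contend with the possibility that the active set $\mathcal{B}(\eta)$ jumps as $\eta$ varies. Strict complementarity ensures that locally around any $\eta_0 \in \mathcal{H}$ the active set is constant, so one obtains a local Lipschitz constant on a neighborhood; extracting a finite subcover of $\mathcal{H}$ then yields a global constant. This piecewise/covering argument is the part that requires care, while the remaining pieces follow by standard compositional smoothness arguments.
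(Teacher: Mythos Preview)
Your proposal is correct and follows essentially the same decomposition as the paper's proof: Lipschitz continuity of $\partial_1\kappa$ and $\partial_2\kappa$, pointwise invertibility of $\partial_1\kappa$, a uniform lower bound on its smallest singular value over $\mathcal H$, Lipschitz continuity of $\partial z^*$ and hence of $\nabla J$, and then the descent lemma combined with the projection inequality and telescoping (the paper derives the same inequality $\|\eta^{i+1}-\eta^i\|_2^2 \le -\alpha\nabla J(\eta^i)^T(\eta^{i+1}-\eta^i)$ via firm nonexpansiveness of the projection rather than the variational characterization, but these are equivalent). The one place where you work harder than necessary is step~(d): the paper does not track active sets or use a finite subcover, but simply notes that $\eta \mapsto \sigma_{\min}\bigl(\partial_1\kappa(z^*(\eta),\eta)\bigr)$ is continuous (Weyl's inequality together with continuity of $z^*$ from the implicit function theorem) and strictly positive at every $\eta$, so the extreme value theorem on the compact set $\mathcal H$ immediately gives a uniform positive lower bound---the possible jumps in $\mathcal B(\eta)$ are irrelevant because the Jacobian itself varies continuously regardless.
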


At a high level, our approach is to show that the objective function $J(\eta) = \gamma^T \eta + h(z^*(\eta))$ has Lipschitz continuous gradients, then apply a standard analysis for the convergence of projected gradient descent for Lipschitz functions, e.g., as in~\cite{Pedregosa2016-rr} or, for a more comprehensive treatment,~\cite{Poljak1987-wz}.
Since we assume the Lipschitz continuity of $\nabla h(z)$, the main technical challenge is showing that $\partial z^*(\eta)$ is Lipschitz continuous as well (in the metric induced by the operator norm).
Fortunately, this follows from our initial smoothness and regularity assumptions in Section~\ref{sec:dispatch}.

\begin{lemma}
\label{lemma:lipschitz-kkt}
    Let $D = N+M$ and consider the KKT operator $\kappa$ defined in Equation~\ref{eq:kkt}
    Then the matrices $\partial_1 \kappa(z, \eta) \in \reals^{D \times D}$ and $\partial_2 \kappa(z, \eta) \in \reals^{D \times K}$ are Lipschitz continuous.
\end{lemma}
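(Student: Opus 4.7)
}

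The plan is a direct, block-by-block computation. First I would write out the two Jacobians explicitly using the structure of $\kappa$. Partitioning $z = (x, \lambda)$, the Jacobian with respect to $z$ is the $D \times D$ block matrix
\begin{equation*}
    \partial_1 \kappa(z, \eta) = \begin{bmatrix}
        \nabla^2 c(x) & A(\eta)^T \\
        \diag(\lambda) A(\eta) & \diag(A(\eta) x - b(\eta))
    \end{bmatrix},
\end{equation*}
and, using Assumption~\ref{assume:affine-param} to write $A(\eta) = A_0 + \sum_{k=1}^K \eta_k A_k$ and $b(\eta) = b_0 + \sum_{k=1}^K \eta_k b_k$ for constant matrices $A_k$ and vectors $b_k$, the Jacobian with respect to $\eta$ is the $D \times K$ matrix whose $k$-th column is
\begin{equation*}
    \bigl( \partial_2 \kappa(z, \eta) \bigr)_k = \begin{bmatrix}
        A_k^T \lambda \\
        \diag(\lambda) \bigl( A_k x - b_k \bigr)
    \end{bmatrix}.
\end{equation*}

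Next I would show that each block is Lipschitz on the relevant domain $\mathcal X \times \Lambda \times \mathcal H$. The block $\nabla^2 c(x)$ is Lipschitz in $x$ directly by Assumption~\ref{assume:convex-diff}. The block $A(\eta)^T$ is linear in $\eta$ by Assumption~\ref{assume:affine-param}, hence Lipschitz. The remaining blocks are bilinear: $\diag(\lambda) A(\eta)$ is bilinear in $(\lambda, \eta)$, the term $\diag(A(\eta)x - b(\eta))$ is bilinear in $(x, \eta)$ plus a linear term in $\eta$, and the column blocks of $\partial_2 \kappa$ are bilinear either in $(\lambda, \eta)$ or in $(x, \lambda)$ with linear correction terms. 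For any bilinear map $B(u, v)$, one has $\|B(u,v) - B(u',v')\|_{\op} \leq \|B\|\,(\|v\| \|u-u'\| + \|u'\|\|v-v'\|)$, so on the compact product $\mathcal X \times \Lambda \times \mathcal H$ (bounded by Assumption~\ref{assume:bounded-domain} and the compactness of $\mathcal H$) each bilinear block is Lipschitz in the operator norm with a constant depending only on the diameters of $\mathcal X$, $\Lambda$, $\mathcal H$ and on $\|A_k\|_{\op}$, $\|b_k\|_2$.

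Finally I would combine these block-wise Lipschitz constants into a single Lipschitz constant for $\partial_1 \kappa$ and $\partial_2 \kappa$ as whole matrices. This is a standard estimate: for any block matrix $M = [M_{ij}]$, the operator norm is bounded by $\sqrt{\sum_{ij} \|M_{ij}\|_{\op}^2}$ (or, more crudely, by the sum of block operator norms), so Lipschitz continuity of each block implies Lipschitz continuity of the full matrix, with the overall constant being a finite sum of the block constants.

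The only non-routine part will be handling the bilinear blocks cleanly, since their Lipschitz constants genuinely depend on the boundedness assumption; without Assumption~\ref{assume:bounded-domain} the result would fail because, for instance, $\diag(\lambda) A(\eta)$ is not globally Lipschitz in $(\lambda, \eta)$. The rest is bookkeeping, as the affine structure of $A$ and $b$ (Assumption~\ref{assume:affine-param}) removes all higher-order dependence on $\eta$ and the smoothness of $c$ (Assumption~\ref{assume:convex-diff}) handles the only nonlinear block in $\partial_1 \kappa$.
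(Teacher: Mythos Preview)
Your proposal is correct and follows essentially the same approach as the paper: compute the two Jacobians block by block, observe that each block is Lipschitz (by Assumptions~\ref{assume:convex-diff} and~\ref{assume:affine-param}), and combine via the triangle inequality. Your version is in fact more careful than the paper's, since you explicitly invoke Assumption~\ref{assume:bounded-domain} to handle the bilinear blocks, whereas the paper's proof only cites Assumptions~\ref{assume:convex-diff} and~\ref{assume:affine-param} and leaves the boundedness implicit.
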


\begin{proof}
    Recall that,
    \begin{equation*}
        \kappa(z, \eta) =
        \begin{bmatrix}
            \nabla c(x) + A(\eta)^T \lambda \\
            \diag(\lambda) ( A(\eta) x - b(\eta) )
        \end{bmatrix},
    \end{equation*}
    where $(x, \lambda) = z$.
    Therefore, the partial Jacobians are,
    \begin{equation*}
        \partial_1 \kappa(z, \eta) =
        \begin{bmatrix}
            \nabla^2 c(x)            & A(\eta)^T \\
            \diag(\lambda) A(\eta)   & \diag(A(\eta) x - b(\eta))
        \end{bmatrix},
        \quad
        \partial_2 \kappa(z, \eta) =
        \begin{bmatrix}
            \sum_n e_n \lambda^T \partial a_n(\eta) \\
            \diag(\lambda) \left(-\partial b(\eta) + \sum_n \partial a_n(\eta) x_n \right)
        \end{bmatrix},
    \end{equation*}
    where $a_n(\eta) \in \reals^M$ is the $n$th column of $A(\eta)$ and $e_n \in \reals^N$ is the $n$th standard unit vector.
    By Assumptions~\ref{assume:convex-diff} and~\ref{assume:affine-param}, each block of the two matrices is Lipschitz continuous.
    It follows that the matrices themselves are Lipschitz continuous (via the triangle inequality).
\end{proof}

\begin{lemma}
\label{lemma:kkt-jac-invertible}
    For any $\eta \in \mathcal H$, the matrix $\partial_1 \kappa(z^*(\eta), \eta)$ is invertible.
\end{lemma}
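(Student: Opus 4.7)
The plan is to leverage the complementary slackness structure at the optimum: the bottom block $\diag(A(\eta)x^*(\eta) - b(\eta))$ of $\partial_1 \kappa(z^*(\eta), \eta)$ vanishes precisely on indices where $\lambda^*_m(\eta) > 0$, and is strictly negative elsewhere (strict inequality of non-binding constraints). Combined with strict complementarity (Assumption~\ref{assume:strict-complement}), this lets us decouple the problem into a non-binding block that is trivially invertible and a binding block whose invertibility follows from strong convexity and linear independence of the active constraint gradients.

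Concretely, I would partition the constraint indices into the binding set $\mathcal{B} = \mathcal{B}(\eta)$ and non-binding set $\mathcal{N} = \{1,\dots,M\} \setminus \mathcal{B}$, and reorder rows/columns of $\partial_1 \kappa(z^*(\eta),\eta)$ so that it takes the block form
\begin{equation*}
    \partial_1 \kappa(z^*(\eta),\eta) =
    \begin{bmatrix}
        \nabla^2 c(x^*) & A_{\mathcal{B}}^T & A_{\mathcal{N}}^T \\
        \diag(\lambda^*_{\mathcal{B}}) A_{\mathcal{B}} & 0 & 0 \\
        0 & 0 & \diag(A_{\mathcal{N}} x^* - b_{\mathcal{N}})
    \end{bmatrix},
\end{equation*}
using (i) $A_{\mathcal{B}} x^* - b_{\mathcal{B}} = 0$ by definition of binding, and (ii) $\lambda^*_{\mathcal{N}} = 0$ by strict complementarity. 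The bottom-right diagonal block has strictly negative entries, so it is invertible, and by block upper-triangular structure (after eliminating the top-right column $A_{\mathcal{N}}^T$ via a Schur complement that does not affect the remaining block), it suffices to prove invertibility of
\begin{equation*}
    M \;=\; \begin{bmatrix} \nabla^2 c(x^*) & A_{\mathcal{B}}^T \\ \diag(\lambda^*_{\mathcal{B}}) A_{\mathcal{B}} & 0 \end{bmatrix}.
\end{equation*}

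To show $M$ is nonsingular, I would take $v = (v_x, v_\lambda)$ with $Mv = 0$. The second block row gives $\diag(\lambda^*_{\mathcal{B}}) A_{\mathcal{B}} v_x = 0$, and since $\lambda^*_{\mathcal{B}} > 0$ (Assumption~\ref{assume:strict-complement}) the diagonal scaling is invertible, so $A_{\mathcal{B}} v_x = 0$. Left-multiplying the first block row by $v_x^T$ then yields $v_x^T \nabla^2 c(x^*) v_x + v_x^T A_{\mathcal{B}}^T v_\lambda = 0$, where the second term vanishes by the previous step. Strong convexity of $c$ (Assumption~\ref{assume:convex-diff}) makes $\nabla^2 c(x^*)$ positive definite, forcing $v_x = 0$. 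The first block row then reduces to $A_{\mathcal{B}}^T v_\lambda = 0$, and the linear independence of the rows of $A_{\mathcal{B}}$ (Assumption~\ref{assume:non-redundant-constraints}) makes $A_{\mathcal{B}}^T$ injective, so $v_\lambda = 0$.

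The main technical subtlety is the bookkeeping around the block decomposition: one must verify that the reordering really does isolate the non-binding block in a way that decouples it from $\lambda_{\mathcal{B}}$ (this uses both halves of complementary slackness), and that the Schur complement argument does not create unwanted interactions. Once that is clean, the invertibility of $M$ is a direct consequence of the three assumptions listed above, applied in sequence.
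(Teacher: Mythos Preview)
Your argument is correct, and the block decomposition is sound: after reordering the constraint indices into binding and non-binding sets, the full Jacobian is genuinely block upper-triangular (the $(3,1)$ and $(3,2)$ blocks vanish because $\lambda^*_{\mathcal N}=0$), so no Schur-complement elimination is even needed---invertibility reduces immediately to that of the two diagonal blocks $M$ and $\diag(A_{\mathcal N}x^*-b_{\mathcal N})$. One small terminological slip: $\lambda^*_{\mathcal N}=0$ follows from ordinary complementary slackness (a KKT condition), not from strict complementarity; strict complementarity is what gives $\lambda^*_{\mathcal B}>0$, which you use later.

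The paper takes a different but equivalent route: it applies Schur's formula with respect to the invertible block $H=\nabla^2 c(x^*)$, reducing to nonsingularity of $S=\diag(Ax^*-b)-\diag(\lambda^*)AH^{-1}A^T$, and then splits $Sv=0$ into binding and non-binding indices. This leads to $A_{\mathcal B}H^{-1}A_{\mathcal B}^T v_{\mathcal B}=0$, which is handled via the square-root trick $\|H^{-1/2}A_{\mathcal B}^T v_{\mathcal B}\|_2^2=0$. Your approach is arguably more elementary---the null-space argument for $M$ avoids forming $AH^{-1}A^T$ or invoking $H^{1/2}$---while the paper's avoids any reordering and works directly in the original coordinates. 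Both rely on exactly the same three ingredients (strong convexity, strict complementarity, linear independence of active rows) applied in the same order.
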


\begin{proof}
    Throughout the proof, we will drop the argument of functions of $\eta$ for readability.
    Strong convexity implies that the Hessian $H = \nabla^2 c(x)$ is invertible.
    Therefore, by the Schur's formula~\cite{Boyd2004-tx}, $\partial_1 \kappa$ is invertible if and only if
    \begin{equation*}
        S = \diag(A x - b) - \diag(\lambda) A H^{-1} A^T
    \end{equation*}
    is invertible.
    To show this, we will show that $S v = 0$ implies $v = 0$, i.e., $S$ has linearly independent columns.
    So suppose $S v = 0$.
    Denote $\mathcal B \subset \{1, \ldots, M\}$ the set of indices of binding constraints, i.e., the indices $m \in \mathcal B$ such that $\tilde a_m^T x - b_m = 0$.

    First, note that for $m \not\in \mathcal B$, complementary slackness implies that $\lambda_m = 0$.
    Consequently, $( \diag(\lambda) A H^{-1} A^T v )_m = 0$.
    Therefore, since $Sv = 0$, it must be that $(\diag(A x - b) v)_m = (\tilde a_m^T x - b_m) v_m = 0$ as well.
    But since $m \not\in \mathcal B$, we know $\tilde a_m^T x - b_m < 0$, so it must be that $v_m = 0$ instead.
    As a result, we know that $v_m = 0$ for all $m \not\in \mathcal B$.

    Now consider $m \in \mathcal B$.
    By definition, $\tilde a_m^T x - b_m = 0$, so $(\diag(A x - b) v)_m = 0$.
    Therefore, it must be that
    \begin{equation*}
        ( \diag(\lambda) A H^{-1} A^T v )_m
        = \lambda_m (A H^{-1} A^T v)_m
        = 0
    \end{equation*}
    as well.
    By strict complementarity (Assumption~\ref{assume:strict-complement}), $\lambda_m > 0$, so it must be that $(A H^{-1} A^T v)_m = 0$.
    Now denote $A_{\mathcal B} \in \reals^{|\mathcal B| \times N}$ the submatrix of $A$ that includes each of the rows in $\mathcal B$ and similarly denote $v_{\mathcal B} \in \reals^{|\mathcal B|}$ the entries of $v$ in $\mathcal B$.
    Then we can rewrite $(A H^{-1} A^T v)_m = 0$ for $m \in \mathcal B$ as
    \begin{equation*}
        A_{\mathcal B} H^{-1} A_{\mathcal B}^T v_{\mathcal B} = 0.
    \end{equation*}
    Of course, this implies $v_{\mathcal B} A_{\mathcal B} H^{-1} A_{\mathcal B}^T v_{\mathcal B} = 0$ as well.
    Since $H$ is positive-definite, it has  invertible square root $H^{1/2}$, so we can rewrite the above as,
    \begin{equation*}
        \| H^{-1/2} A_{\mathcal B}^T v_{\mathcal B} \|_2^2 = 0,
    \end{equation*}
    which then implies $H^{-1/2} A_{\mathcal B}^T v_{\mathcal B} = 0$.
    Multiplying both sides by $H^{1/2}$, we conclude $A_{\mathcal B}^T v_{\mathcal B} = 0$.
    By Assumption~\ref{assume:non-redundant-constraints}, this can only hold if $v_{\mathcal B} = 0$.

    Since $v_{m} = 0$ for $m \not\in \mathcal B$ and $m \in \mathcal B$, it must be that $v = 0$.
    We conclude that $S$ has linearly independent columns and is therefore invertible.
    As a result, we conclude $\partial_1 \kappa(z^*(\eta), \eta)$ is invertible as well.
\end{proof}

\begin{lemma}
    \label{lemma:non-singular-jac}
    There exists $\epsilon > 0$ such that for all $\eta \in \mathcal H$, the Jacobian $\partial_1 \kappa(z^*(\eta), \eta)$ has minimum singular value
    \begin{equation*}
        \rho(\eta) = \sigma_{\min}(\partial_1 \kappa(z^*(\eta), \eta)) \geq \epsilon.
    \end{equation*}
\end{lemma}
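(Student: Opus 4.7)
The plan is to use a compactness argument: show that $\rho(\eta)$ is continuous on the compact set $\mathcal{H}$ and strictly positive pointwise, hence bounded below by a positive constant.

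First, I would establish continuity of $\rho$ on $\mathcal{H}$. Note that $\rho$ is the composition of three maps: the dispatch map $\eta \mapsto z^*(\eta)$, the partial Jacobian $(z,\eta) \mapsto \partial_1 \kappa(z,\eta)$, and the minimum singular value $A \mapsto \sigma_{\min}(A)$. The second map is Lipschitz continuous by Lemma~\ref{lemma:lipschitz-kkt}. The third is 1-Lipschitz with respect to the operator norm as a standard consequence of Weyl's inequality for singular values. So the main subtlety is the continuity of the dispatch map $z^*$.

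For continuity of $z^*$, I would apply the Implicit Function Theorem (Theorem~\ref{thm:ift}) locally at each $\eta \in \mathcal{H}$. By Lemma~\ref{lemma:kkt-jac-invertible}, the partial Jacobian $\partial_1 \kappa(z^*(\eta), \eta)$ is invertible for every $\eta \in \mathcal{H}$, so the hypotheses of the IFT are satisfied at every $\eta$. Uniqueness of $z^*(\eta)$ (from Assumptions~\ref{assume:feasible},~\ref{assume:convex-diff},~\ref{assume:non-redundant-constraints},~\ref{assume:strict-complement}) guarantees that the local implicit functions given by the IFT agree wherever they overlap, and hence patch together into a single continuous function on all of $\mathcal{H}$. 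Thus $z^*$ is continuous (in fact, continuously differentiable) on $\mathcal{H}$.

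Combining these three facts, $\rho : \mathcal{H} \rightarrow \reals$ is continuous. Since $\mathcal{H}$ is compact, $\rho$ attains its infimum at some $\eta^\dagger \in \mathcal{H}$. By Lemma~\ref{lemma:kkt-jac-invertible}, $\partial_1 \kappa(z^*(\eta^\dagger), \eta^\dagger)$ is invertible, so $\rho(\eta^\dagger) > 0$. Setting $\epsilon = \rho(\eta^\dagger) > 0$ gives the claim.

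The main obstacle is the continuity (not just local differentiability) of $z^*$ on the whole of $\mathcal{H}$. The IFT as stated only gives a local differentiable branch on some open neighborhood; the fact that uniqueness globally on $\mathcal{H}$ allows these branches to be glued into a single continuous map requires the pointwise invertibility from Lemma~\ref{lemma:kkt-jac-invertible} to hold everywhere, which is why that lemma is proved first. Once continuity is established, the compactness step is essentially automatic.
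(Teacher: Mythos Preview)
Your proposal is correct and follows essentially the same approach as the paper: continuity of $z^*$ via the Implicit Function Theorem and Lemma~\ref{lemma:kkt-jac-invertible}, continuity of $\partial_1\kappa$ via Lemma~\ref{lemma:lipschitz-kkt}, continuity of $\sigma_{\min}$ via Weyl's inequality, and then the Extreme Value Theorem on the compact set $\mathcal H$ combined with pointwise invertibility to get a strictly positive minimum. Your treatment of the gluing of local IFT branches via global uniqueness is slightly more explicit than the paper's, but the argument is the same.
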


\begin{proof}
We start by noting that Lemma~\ref{lemma:kkt-jac-invertible} implies that the Implicit Function Theorem (Theorem~\ref{thm:ift}) holds for every $\eta \in \mathcal H$.
So $z^*(\eta)$ is differentiable and, hence, continuous, for all $\eta \in \mathcal H$.
As a consequence of this and Lemma~\ref{lemma:lipschitz-kkt}, the Jacobian $\partial_1 \kappa(z^*(\eta), \eta)$ is a continuous function of $\eta$ as well.

Since the minimum singular value is a continuous function (by Weyl's inequality), the function $\rho$ is continuous.
Since $\mathcal H$ is also compact, the Extremal Value Theorem implies that there exists $\hat \eta \in \mathcal H$ such that
\begin{equation*}
    \rho(\hat \eta) = \inf_{\eta \in \mathcal H} \rho(\eta).
\end{equation*}
Of course, $\hat \eta \in \mathcal H$, so $\rho(\hat \eta) = \epsilon > 0$ (otherwise, $\partial_1 \kappa(z^*(\eta), \eta)$ is not invertible).
So $\inf_{\eta \in \mathcal H} \rho(\eta) = \epsilon$, which proves the claim.
\end{proof}

\begin{corollary}
    \label{corollary:cont-bounded-inverse}
    The function $M(\eta) = (\partial_1 \kappa(z^*(\eta), \eta))^{-1}$ is Lipschitz continuous and bounded.
\end{corollary}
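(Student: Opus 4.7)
The plan is to first establish boundedness of $M(\eta)$ from Lemma~\ref{lemma:non-singular-jac}, then use a standard matrix perturbation identity together with Lipschitz continuity of $\eta \mapsto \partial_1 \kappa(z^*(\eta), \eta)$ to obtain Lipschitz continuity of $M$.

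For boundedness, I would observe that the operator norm of an inverse matrix equals the reciprocal of its smallest singular value. Lemma~\ref{lemma:non-singular-jac} gives $\sigma_{\min}(\partial_1 \kappa(z^*(\eta), \eta)) \geq \epsilon$ uniformly in $\eta \in \mathcal H$, which immediately yields $\|M(\eta)\|_{\op} \leq 1/\epsilon$.

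For Lipschitz continuity, the key identity is
\begin{equation*}
    M(\eta_1) - M(\eta_2) = M(\eta_1)\bigl(K(\eta_2) - K(\eta_1)\bigr) M(\eta_2),
\end{equation*}
where $K(\eta) = \partial_1 \kappa(z^*(\eta), \eta)$. Taking operator norms and using submultiplicativity plus the bound just obtained, the task reduces to showing that $K$ is itself Lipschitz as a map $\mathcal H \to \reals^{D \times D}$. Lemma~\ref{lemma:lipschitz-kkt} says $\partial_1 \kappa$ is Lipschitz jointly in $(z, \eta)$, so I would compose this with the map $\eta \mapsto (z^*(\eta), \eta)$ and argue Lipschitzness of the composition. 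The component $\eta \mapsto \eta$ is trivially Lipschitz, so the remaining work is to show $z^*$ is Lipschitz on $\mathcal H$.

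This last piece is where I expect the main care is needed, though it is not hard once assembled. By the Implicit Function Theorem (Theorem~\ref{thm:ift}), $z^*$ is differentiable on $\mathcal H$ with
\begin{equation*}
    \partial z^*(\eta) = -M(\eta)\, \partial_2 \kappa(z^*(\eta), \eta).
\end{equation*}
The first factor is bounded in operator norm by $1/\epsilon$ from the first step. The second factor, by Assumption~\ref{assume:bounded-domain}, is evaluated on the compact set $\mathcal H \times \mathcal X \times \Lambda$, on which $\partial_2 \kappa$ is continuous (in fact Lipschitz, by Lemma~\ref{lemma:lipschitz-kkt}) and hence bounded in operator norm. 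Thus $\partial z^*$ is uniformly bounded on $\mathcal H$, and the mean value inequality on the convex compact set $\mathcal H$ gives Lipschitz continuity of $z^*$. Plugging this back into the composition argument yields Lipschitz continuity of $K$, and hence of $M$, completing the proof.
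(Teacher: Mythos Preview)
Your proof is correct, and in fact more carefully argued than the paper's own. Both arguments establish boundedness the same way, via Lemma~\ref{lemma:non-singular-jac} and the identity $\|A^{-1}\|_{\op} = 1/\sigma_{\min}(A)$. For Lipschitz continuity, however, the paper simply observes that the matrix inverse map $A \mapsto A^{-1}$ is continuous, concludes that $M$ is continuous on the compact set $\mathcal H$, and asserts ``hence Lipschitz continuous as well.'' That last step is not a valid inference in general (continuity on a compact set yields only uniform continuity), so the paper's argument as written has a small gap. Your route---the resolvent identity $M(\eta_1)-M(\eta_2)=M(\eta_1)\bigl(K(\eta_2)-K(\eta_1)\bigr)M(\eta_2)$, combined with an explicit Lipschitz bound on $K(\eta)=\partial_1\kappa(z^*(\eta),\eta)$ obtained by first showing $z^*$ is Lipschitz via a bound on $\partial z^*$---actually supplies the missing quantitative control and yields an explicit Lipschitz constant for $M$. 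What the paper's shortcut buys is brevity; what your approach buys is rigor and a constant one could track if needed.
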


\begin{proof}
    Boundedness and, hence, compactness, follows directly from Lemma~\ref{lemma:non-singular-jac}, since
    \begin{align*}
        \left\| (\partial_1 \kappa(z^*(\eta), \eta))^{-1} \right\|
        = \frac{1}{\sigma_{\max}\left( (\partial_1 \kappa(z^*(\eta), \eta))^{-1} \right)}
         = \frac{1}{\sigma_{\min}\left( \partial_1 \kappa(z^*(\eta), \eta) \right)}
        \leq 1 / \epsilon.
    \end{align*}
    Continuity follows from the fact that the inverse map $A \rightarrow A^{-1}$ is continuous~\cite{Stewart1969-br}.
    So $M(\eta)$ is a continuous function on a compact set, and is hence Lipschitz continuous as well.
\end{proof}

\begin{lemma}
    The Jacobian $\partial z^*(\eta)$ and the gradient of the implicit planning objective $\nabla J(\eta)$ are Lipschitz continuous functions over $\eta \in \mathcal H$.
\end{lemma}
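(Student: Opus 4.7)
The plan is to express $\partial z^*(\eta)$ using the Implicit Function Theorem as
\[
\partial z^*(\eta) = -M(\eta) \cdot \partial_2 \kappa(z^*(\eta), \eta),
\]
where $M(\eta) = \partial_1 \kappa(z^*(\eta), \eta)^{-1}$, and to assemble the Lipschitz bound factor by factor. Corollary~\ref{corollary:cont-bounded-inverse} already hands us that $M$ is Lipschitz and bounded on $\mathcal H$, and Lemma~\ref{lemma:lipschitz-kkt} tells us $\partial_2 \kappa$ is Lipschitz jointly in $(z,\eta)$. To chain these, the first step is to upgrade the continuity of $z^*$ to Lipschitz continuity. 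This follows because $z^*$ is differentiable with
\[
\|\partial z^*(\eta)\|_{\op} \leq \|M(\eta)\|_{\op} \cdot \|\partial_2 \kappa(z^*(\eta),\eta)\|_{\op},
\]
and the right-hand side is uniformly bounded on the compact set $\mathcal H$: $\|M(\eta)\|_{\op} \leq 1/\epsilon$ by Lemma~\ref{lemma:non-singular-jac}, while $\partial_2 \kappa$ is continuous on the compact set $\mathcal X \times \Lambda \times \mathcal H$ (using Assumption~\ref{assume:bounded-domain}) and hence bounded.

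Second, I would show $\eta \mapsto \partial_2 \kappa(z^*(\eta),\eta)$ is Lipschitz on $\mathcal H$. This is just composition: by Lemma~\ref{lemma:lipschitz-kkt} there is a constant $L_\kappa$ with $\|\partial_2 \kappa(z_1,\eta_1) - \partial_2 \kappa(z_2,\eta_2)\|_{\op} \leq L_\kappa ( \|z_1 - z_2\|_2 + \|\eta_1 - \eta_2\|_2)$, and combining with the Lipschitz bound on $z^*$ from the previous step gives the desired estimate. A standard product rule for bounded, Lipschitz matrix-valued maps,
\[
\|A_1 B_1 - A_2 B_2\|_{\op} \leq \|A_1\|_{\op} \|B_1 - B_2\|_{\op} + \|B_2\|_{\op} \|A_1 - A_2\|_{\op},
\]
applied to $A(\eta) = M(\eta)$ and $B(\eta) = \partial_2 \kappa(z^*(\eta),\eta)$, then yields Lipschitz continuity of $\partial z^*(\eta)$ in the operator-norm metric.

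Third, for the gradient of the planning objective $\nabla J(\eta) = \gamma + \partial z^*(\eta)^T \nabla h(z^*(\eta))$, I would use the same product-rule template once more. The transpose $\partial z^*(\eta)^T$ inherits Lipschitz continuity and boundedness from the previous step. The factor $\nabla h(z^*(\eta))$ is Lipschitz because $\nabla h$ is Lipschitz by hypothesis and $z^*$ is Lipschitz; it is also bounded since $z^*(\eta) \in \mathcal X \times \Lambda$ is compact and $\nabla h$ is continuous. Adding the constant $\gamma$ and applying the product rule gives Lipschitz continuity of $\nabla J$.

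The only real subtlety is the boundedness ingredient that turns ``Lipschitz$\,\times\,$Lipschitz'' into ``Lipschitz'': without it the product rule can blow up. I expect this to be routine once I invoke Assumption~\ref{assume:bounded-domain} together with compactness of $\mathcal H$ and the continuity of $\partial_2 \kappa$ and $\nabla h$, all of which have already been established. Quantitative Lipschitz constants can then be tracked through the above inequalities if needed, but only their existence is required here.
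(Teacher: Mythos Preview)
Your proposal is correct and follows essentially the same approach as the paper: express $\partial z^*(\eta)$ via the Implicit Function Theorem and use the product rule for bounded, Lipschitz matrix-valued maps, invoking Lemma~\ref{lemma:lipschitz-kkt} and Corollary~\ref{corollary:cont-bounded-inverse}. If anything, you are slightly more careful than the paper in explicitly establishing Lipschitz continuity of $z^*$ (via boundedness of $\partial z^*$) before chaining through the composition $\eta \mapsto \partial_2\kappa(z^*(\eta),\eta)$, whereas the paper absorbs this step into the blanket remark about compositions of Lipschitz functions.
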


\begin{proof}
    Both functions are compositions, sums, and products of bounded, Lipschitz continuous functions on compact sets.
    Therefore, they are Lipschitz continuous.

    For clarity, we will write this out explicitly for the Jacobian of the solution map $\partial z^*(\eta)$.
    Choose $\eta_1, \eta_2 \in \mathcal H$.
    For ease of notation, let $\partial_i \kappa_j$ denote $\partial_i \kappa(z^*(\eta_j), \eta_j)$ for $i, j \in \{1, 2\}$.
    Then
    \begin{align*}
        \| \partial z^*(\eta_1) - \partial z^*(\eta_2) \|
        &= \left\|
            \left( \partial_1 \kappa_1 \right)^{-1} \partial_2 \kappa_1 - \left( \partial_1 \kappa_2 \right)^{-1} \partial_2 \kappa_2
        \right\| \\
        &\leq \left\| \left(
            \left( \partial_1 \kappa_1 \right)^{-1} -
            \left( \partial_1 \kappa_2 \right)^{-1}
            \right) \partial_2 \kappa_1
        \right\| +
        \left\| \left( \partial_1 \kappa_2 \right)^{-1}
            \left( \partial_2 \kappa_1 - \partial_2 \kappa_2 \right)
        \right\| \\
        &\leq \left\|
            \left( \partial_1 \kappa_1 \right)^{-1} -
            \left( \partial_1 \kappa_2 \right)^{-1}
            \right\| \left\| \partial_2 \kappa_1 \right\|
         +
        \left\| \left( \partial_1 \kappa_2 \right)^{-1} \right\|
        \left\| \partial_2 \kappa_1 - \partial_2 \kappa_2 \right\| .
    \end{align*}
    Of course, both $\partial_2 K$ and $(\partial_1 K)^{-1}$  are both Lipschitz continuous and bounded by Lemma~\ref{lemma:lipschitz-kkt} and Corollary~\ref{corollary:cont-bounded-inverse}, respectively.
    So let $L$ and $M$ be a Lipschitz constant and an upper bound for both matrix-valued functions.
    Then the expression above is bounded by
    \begin{align*}
        &\leq M \left\| \left( \partial_1 \kappa_1 \right)^{-1} -
            \left( \partial_1 \kappa_2 \right)^{-1}
            \right\| + M
        \left\| \partial_2 \kappa_1 - \partial_2 \kappa_2 \right\| \\
        &\leq 2 ML \| \eta_1 - \eta_2 \|,
    \end{align*}
    which shows $\partial z^*(\eta)$ is Lipschitz continuous with Lipschitz constant $2 ML$.
    A nearly identical proof can be used to show the gradient $\nabla J(\eta) = \gamma + \partial z^*(\eta)^T \cdot \nabla h(z^*(\eta))$ is Lipschitz continuous as well.
\end{proof}

We are now ready to prove the main result.

\begin{proof}[Proof of Theorem~\ref{thm:gradient-descent-full}.]% \paragraph{}
Our proof is essentially the same of that in~\cite{Pedregosa2016-rr}, except that we ignore error in the evaluation of the gradient.
Let $L$ be the Lipschitz constant of $\nabla J(\eta)$ and let $J^* = \inf_{\eta \in \mathcal H} J(\eta)$ (which is finite by continuity and compactness).
To show convergence, we will recursively apply the standard quadratic upper bound for functions with Lipschitz continuous gradients,
\begin{equation*}
    J(v) \leq J(u) + \nabla J(u)^T (v - u) + (L/2) \| u - v \|_2^2,
\end{equation*}
where $u, v \in \reals^K$, which can be found in~\cite{Poljak1987-wz}, for example.
We also use the fact that projections onto convex sets are nonexpansive~\cite{Ryu2016-qe},
\begin{equation*}
    \| \proj_{\mathcal H}(u) - \proj_{\mathcal H}(v) \|_2^2
    \leq (u - v)^T (\proj_{\mathcal H}(u) - \proj_{\mathcal H}(v)),
\end{equation*}
for $u, v \in \reals^K$.
In particular, for $u = \eta^i - \alpha \nabla J(\eta^i)$ and $v = \eta^i$, we obtain
\begin{equation}
\label{eq:non-expansive}
    \| \eta^{i+1} - \eta^i \|_2^2
    \leq -\alpha \nabla J(\eta^i)^T (\eta^{i+1} - \eta^i),
\end{equation}
Then we obtain the following inequality,
\begin{equation*}
\begin{aligned}
    J(\eta^{i+1})
    &\leq J(\eta^i) + \nabla J(\eta^i)^T (\eta^{i+1} - \eta^i) + (L/2) \| \eta^{i+1} - \eta^i \|_2^2 \\
    &\leq J(\eta^i) + \left( \frac{L}{2} - \frac{1}{\alpha} \right) \| \eta^{i+1} - \eta^i \|_2^2 & \textrm{by \eqref{eq:non-expansive}} \\
    &\leq J(\eta^i) - (L/2) \| \eta^{i+1} - \eta^i \|_2^2 & \textrm{since $\alpha \leq 1/L$}.
\end{aligned}
\end{equation*}
Applying this bound recursively, we obtain
\begin{equation*}
     J(\eta^{i+1}) \leq J(\eta^1) - \frac{L}{2} \sum_{j=1}^{i} \| \eta^{j+1} - \eta^j \|_2^2.
\end{equation*}
Since $J$ is a continuous function being minimized over a compact set, it has a finite lower bound $J^* \leq J(\eta)$ for all $\eta \in \mathcal H$.
This gives a finite upper bound on the series of square norms,
\begin{equation*}
    \sum_{j=1}^{i} \| \eta^{j+1} - \eta^j \|_2^2 \leq \frac{2(J(\eta^1) - J^*)}{L}.
\end{equation*}
In particular, this implies at least one of the $j$s in the summand must be such that $\| \eta^{j+1} - \eta^j \|_2^2 \leq 2(J(\eta^1) - J^*) / Li$, which proves the theorem.
\end{proof}

\section{Technical results on convex relaxation}

\subsection{Approximate strong duality formulation}
\label{appendix:strong-dual}

We restate Lemma~\ref{lemma:delta-strong-duality} here for convenience.

\begin{lemma*}
    Suppose $h$ is $L$-Lipschitz continuous.
    For all $\epsilon > 0$, there exists some $\delta > 0$ such that the $\delta$-strong duality formulation~\eqref{eq:mep-sd} is an $\epsilon$-approximation of~\eqref{eq:mep} over $\eta$, i.e., for all $(\eta, x, \lambda)$ feasible for~\eqref{eq:mep-sd},
    \begin{equation*}
        | h(x, \lambda) - h(x^*(\eta), \lambda^*(\eta)) | \leq \epsilon.
    \end{equation*}
\end{lemma*}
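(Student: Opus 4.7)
The plan is to prove this lemma by contradiction, exploiting the compactness supplied by Assumption~\ref{assume:bounded-domain} and the continuity of the dispatch maps guaranteed by the implicit function theorem. Fix $\epsilon > 0$ and suppose to the contrary that for every sequence $\delta_n \downarrow 0$ there exist triples $(\eta_n, x_n, \lambda_n)$ feasible for the $\delta_n$-strong duality formulation satisfying $|h(x_n,\lambda_n) - h(x^*(\eta_n), \lambda^*(\eta_n))| > \epsilon$. The aim is to extract a convergent subsequence whose limit is forced (by exact strong duality) to coincide with the unique optimal primal/dual pair, then use continuity to collide with the assumed lower bound.

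The first step is to confirm that the sequence lies in a compact set so Bolzano-Weierstrass applies. Compactness of $\mathcal H$ handles $\eta_n$. For $x_n$, combining weak duality with the $\delta_n$-relaxed strong duality constraint yields $c(x_n) \leq c(x^*(\eta_n)) + \delta_n$ (since $u(\lambda_n,\eta_n) \leq u(\lambda^*(\eta_n),\eta_n) = c(x^*(\eta_n))$), and strong convexity of $c$ (Assumption~\ref{assume:convex-diff}) together with boundedness of $x^*(\eta_n)$ on $\mathcal H$ (Assumption~\ref{assume:bounded-domain}) then yields a uniform bound on $\|x_n\|_2$. For $\lambda_n$, fix for each $\eta \in \mathcal H$ a Slater point $\tilde x(\eta)$ with $A(\eta)\tilde x(\eta) - b(\eta) \leq -\gamma \ones$ for a uniform gap $\gamma > 0$ (which exists by compactness of $\mathcal H$, continuity of $A,b$, and Assumption~\ref{assume:feasible}). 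Then the Lagrangian weak duality estimate
\begin{equation*}
    u(\lambda_n,\eta_n) \leq c(\tilde x(\eta_n)) + \lambda_n^T(A(\eta_n)\tilde x(\eta_n) - b(\eta_n)) \leq c(\tilde x(\eta_n)) - \gamma \ones^T \lambda_n
\end{equation*}
combined with $c(x_n) \leq u(\lambda_n,\eta_n) + \delta_n$ yields $\gamma\|\lambda_n\|_1 \leq c(\tilde x(\eta_n)) - c(x_n) + \delta_n$, which is bounded by the previously established bound on $x_n$ and continuity of $c$.

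Passing to a convergent subsequence, let $(\eta_n, x_n, \lambda_n) \to (\bar\eta, \bar x, \bar\lambda)$. Continuity of $c,u,A,b$ and passing to the limit in the feasibility constraints give $A(\bar\eta)\bar x \leq b(\bar\eta)$, $\bar\lambda \geq 0$, and $c(\bar x) \leq u(\bar\lambda,\bar\eta)$ (since $\delta_n \to 0$). Weak duality pins this down to $c(\bar x) = u(\bar\lambda,\bar\eta)$, which by strong duality and the uniqueness of the primal and dual dispatch maps (Assumptions~\ref{assume:convex-diff},~\ref{assume:non-redundant-constraints},~\ref{assume:strict-complement}, together with Appendix~\ref{appendix:unique-dual}) forces $\bar x = x^*(\bar\eta)$ and $\bar\lambda = \lambda^*(\bar\eta)$.

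The contradiction now follows from continuity. The implicit function theorem (Theorem~\ref{thm:ift}) applied to $\kappa$, together with the invertibility of $\partial_1\kappa$ established in Lemma~\ref{lemma:kkt-jac-invertible}, guarantees that $x^*$ and $\lambda^*$ are continuous functions of $\eta$ on $\mathcal H$. Hence $x^*(\eta_n) \to x^*(\bar\eta) = \bar x$ and $\lambda^*(\eta_n) \to \lambda^*(\bar\eta) = \bar\lambda$, so by $L$-Lipschitz continuity of $h$,
\begin{equation*}
    |h(x_n,\lambda_n) - h(x^*(\eta_n), \lambda^*(\eta_n))| \leq L \, \| (x_n - x^*(\eta_n), \lambda_n - \lambda^*(\eta_n)) \|_2 \longrightarrow 0,
\end{equation*}
contradicting the assumed lower bound $> \epsilon$. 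The main obstacle in executing this plan is the compactness step for the dual iterates $\lambda_n$: Assumption~\ref{assume:bounded-domain} only bounds $\lambda^*(\eta)$, not arbitrary $\delta$-feasible $\lambda$, so the uniform Slater-gap argument above is genuinely needed to control $\|\lambda_n\|_1$.
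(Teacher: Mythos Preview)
Your proof is correct and follows essentially the same contradiction-via-compactness strategy as the paper: build a sequence with shrinking $\delta_n$, extract a convergent subsequence, observe that the limit satisfies exact strong duality and hence equals $(x^*(\bar\eta),\lambda^*(\bar\eta))$ by uniqueness, and contradict via continuity of the dispatch maps and Lipschitz continuity of $h$. The paper packages this as an intermediate lemma bounding $\|x-x^*(\eta)\|$ and $\|\lambda-\lambda^*(\eta)\|$ first and is actually less careful than you are about why the feasible sequence is bounded---it simply invokes ``compactness of the domain of $(x,\lambda)$'' without the strong-convexity and uniform-Slater arguments you supply, so your treatment of that step is an improvement.
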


To prove Lemma~\ref{lemma:delta-strong-duality}, we first show that the feasibility for the $\delta$-strong duality problem implies that $(x, \lambda)$ is close to $(x^*(\eta), \lambda^*(\eta))$.
Then, it is straightforward to show main result.

\begin{lemma}
\label{lemma:epsilon-approx-is-close}
For all $\epsilon > 0$, there exists some $\delta > 0$ such that for all $\eta \in \mathcal H$,
\begin{equation}
\label{eq:approx-sd-constraints}
\begin{aligned}
    c(x) + b(\eta)^T \lambda + c^*(-A(\eta)^T \lambda) \leq \delta \\
    A(\eta) x \leq b(\eta) \\
    \lambda \geq 0
\end{aligned}
\end{equation}
implies
\begin{equation*}
    \max\left(
        \|x - x^*(\eta) \|_2,
        \| \lambda - \lambda^*(\eta) \|_2 \right) \leq \epsilon
\end{equation*}
\end{lemma}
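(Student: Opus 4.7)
My plan is to show that the three ingredients in the approximate strong duality constraint — strong convexity of $c$, convexity of $c^\star$, and the KKT relations satisfied by $(x^*(\eta),\lambda^*(\eta))$ — together force $(x,\lambda)$ to lie in a neighborhood of $(x^*(\eta),\lambda^*(\eta))$ whose size is $O(\sqrt\delta)$.

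\textbf{Step 1: expand the duality gap.} Using strong duality at the optimum, $c(x^*(\eta)) + b(\eta)^T\lambda^*(\eta) + c^\star(-A(\eta)^T\lambda^*(\eta)) = 0$, the first constraint in~\eqref{eq:approx-sd-constraints} rewrites as
\begin{equation*}
[c(x)-c(x^*(\eta))] + b(\eta)^T(\lambda-\lambda^*(\eta)) + [c^\star(-A(\eta)^T\lambda)-c^\star(-A(\eta)^T\lambda^*(\eta))] \leq \delta.
\end{equation*}
Assumption~\ref{assume:convex-diff} gives $\mu$-strong convexity of $c$, and combined with Assumption~\ref{assume:bounded-domain} yields a Lipschitz constant $L_c$ for $\nabla c$ on $\mathcal X$, whence $c^\star$ is $(1/L_c)$-strongly convex on the image of $-A(\mathcal H)^T \Lambda$. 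The KKT conditions of the dispatch problem give $\nabla c(x^*(\eta)) = -A(\eta)^T\lambda^*(\eta)$ and, equivalently, $\nabla c^\star(-A(\eta)^T\lambda^*(\eta)) = x^*(\eta)$. Applying the two strong-convexity lower bounds to the bracketed differences, the inequality becomes
\begin{equation*}
\tfrac{\mu}{2}\|x-x^*(\eta)\|_2^2 + \tfrac{1}{2L_c}\|A(\eta)^T(\lambda-\lambda^*(\eta))\|_2^2 + \text{(linear part)} \leq \delta.
\end{equation*}

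\textbf{Step 2: sign of the linear part.} Collecting the linear terms and using the complementary slackness identities $\lambda^*(\eta)^T(b(\eta)-A(\eta)x^*(\eta))=0$ and $\lambda^*(\eta)^T A(\eta)x^*(\eta) = \lambda^*(\eta)^T b(\eta)$, the linear part simplifies to
\begin{equation*}
\lambda^*(\eta)^T(b(\eta)-A(\eta)x) + \lambda^T(b(\eta)-A(\eta)x^*(\eta)),
\end{equation*}
and both summands are \emph{nonnegative} by the primal feasibility $A(\eta)x\leq b(\eta)$, $A(\eta)x^*(\eta)\leq b(\eta)$ and dual feasibility $\lambda,\lambda^*(\eta)\geq 0$. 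Hence each of the four terms is separately bounded by~$\delta$.

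\textbf{Step 3: extract the $x$ bound.} The inequality $\tfrac{\mu}{2}\|x-x^*(\eta)\|_2^2\leq\delta$ gives $\|x-x^*(\eta)\|_2\leq\sqrt{2\delta/\mu}$ immediately, uniformly in $\eta$.

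\textbf{Step 4: split $\lambda$ by binding set.} Write $\mathcal B=\mathcal B(\eta)$ and $\bar{\mathcal B}$ for its complement. On $\bar{\mathcal B}$, Assumption~\ref{assume:strict-complement} and a compactness argument on $\mathcal H$ (using that by strict complementarity and continuity of $\lambda^*$ and the slack $b(\eta)-A(\eta)x^*(\eta)$, the binding set is locally constant) furnish a uniform lower bound $\sigma>0$ on the slack of non-binding constraints. From $\lambda^T(b(\eta)-A(\eta)x^*(\eta))\leq\delta$ we then get $\|\lambda_{\bar{\mathcal B}}\|_1\leq \delta/\sigma$, and since $\lambda^*_{\bar{\mathcal B}}=0$, also $\|(\lambda-\lambda^*(\eta))_{\bar{\mathcal B}}\|_2 = O(\delta)$.

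\textbf{Step 5: finish the $\lambda$ bound via Assumption~\ref{assume:non-redundant-constraints}.} Denote $A_{\mathcal B}(\eta)$ the submatrix of rows in $\mathcal B$; by Assumption~\ref{assume:non-redundant-constraints} its rows are linearly independent, so $A_{\mathcal B}(\eta)^T$ has a strictly positive minimum singular value $\sigma_{\min}(\eta)$. The same local-constancy argument yields a uniform lower bound $\sigma_A>0$ on $\sigma_{\min}(\eta)$ over $\mathcal H$. Writing $A(\eta)^T(\lambda-\lambda^*(\eta)) = A_{\mathcal B}(\eta)^T(\lambda-\lambda^*(\eta))_{\mathcal B} + A_{\bar{\mathcal B}}(\eta)^T \lambda_{\bar{\mathcal B}}$ and using the triangle inequality together with Step~4 and the bound $\|A(\eta)^T(\lambda-\lambda^*(\eta))\|_2\leq\sqrt{2L_c\delta}$, we obtain $\|(\lambda-\lambda^*(\eta))_{\mathcal B}\|_2 = O(\sqrt\delta)$. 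Combining with Step~4 gives $\|\lambda-\lambda^*(\eta)\|_2 = O(\sqrt\delta)$. Choosing $\delta$ small enough as a function of $\epsilon$, $\mu$, $L_c$, $\sigma$, $\sigma_A$, and $\max_{\eta}\|A(\eta)\|_{\op}$ concludes.

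\textbf{Main obstacle.} The algebraic manipulations in Steps~1--3 are routine. The delicate part is Steps~4--5: establishing the \emph{uniform} constants $\sigma$ and $\sigma_A$ over $\mathcal H$. This rests on the observation that strict complementarity plus continuity of $\lambda^*$ and the primal slack forces the binding index set to be locally constant in $\eta$, so that on each connected component of $\mathcal H$ (compact) the minimum positive slack and the minimum singular value of the active constraint submatrix are continuous and hence attain positive minima. Making this argument watertight — in particular, ruling out pathological limit points where the binding structure degenerates — is the most technical piece of the proof.
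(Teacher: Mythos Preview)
Your proof is correct and takes a genuinely different route from the paper. The paper argues by contradiction and sequential compactness: if the conclusion failed for every $\delta$, a subsequence of near-optimal triples $(\eta_i,x_i,\lambda_i)$ would converge to a point with zero duality gap yet bounded away from the optimum, contradicting uniqueness of $(x^*(\eta),\lambda^*(\eta))$. Your approach is quantitative: strong convexity of $c$ and of $c^\star$, together with the nonnegativity decomposition in Step~2, yield an explicit $O(\sqrt\delta)$ bound. This buys you a rate (and makes transparent exactly which assumption does which job), at the cost of more bookkeeping; the paper's argument is shorter and uses only uniqueness and continuity.

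Two points to tighten. First, in Step~1 you invoke $(1/L_c)$-strong convexity of $c^\star$ before $\lambda$ is known to lie in a bounded set, but Assumption~\ref{assume:convex-diff} only gives $\nabla c$ \emph{locally} Lipschitz (Lipschitz $\nabla^2 c$ permits $\|\nabla^2 c\|$ to grow linearly), so $c^\star$ is only strongly convex on compacta. The clean fix is a Slater bound from Assumption~\ref{assume:feasible}: a uniform Slater point $x_0(\eta)$ with slack $s>0$ gives $u(\lambda,\eta)\le c(x_0(\eta))-s\|\lambda\|_1$, and since $u(\lambda,\eta)\ge c(x^*(\eta))-\delta$ one gets $\|\lambda\|_1\le (c(x_0(\eta))-c(x^*(\eta))+\delta)/s$, bounded uniformly over $\mathcal H$. (The paper's own proof glosses over the same issue by asserting ``compactness of the domain of $(x,\lambda)$''.) Note also that Steps~2--4 only require \emph{convexity} of $c^\star$, so you can postpone the strong-convexity appeal to Step~5 if you prefer. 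Second, your ``main obstacle'' dissolves once you observe that $\mathcal H=\mathcal H^{\cont}$ is a box and hence connected: local constancy of $\mathcal B(\eta)$ immediately becomes global constancy, so $\sigma$ and $\sigma_A$ are just minima of continuous positive functions over compact $\mathcal H$, with no degenerate limit points possible since strict complementarity is assumed at \emph{every} $\eta\in\mathcal H$, not merely generically.
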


\begin{proof}
Fix $\epsilon > 0$.
Now suppose for sake of contradiction that for all $\delta > 0$, there exists $(\eta, x, \lambda) \in \reals^N \times \reals^M$ such that the constraints in~\eqref{eq:approx-sd-constraints} are satisfied but $\| x - x^*(\eta) \|_2 \geq \epsilon$.
We can therefore build sequence of points $(\eta_1, x_1, \lambda_1), (\eta_2, x_2, \lambda_2), \ldots$ where $x_i$ is bounded away from $x_i^*(\eta_i)$ and $A(\eta_i) x_i \leq b(\eta_i)$, $\lambda_i \geq 0$, and $c(x_i) + b(\eta_i)^T \lambda_i + c^*(-A(\eta_i)^T \lambda_i) \leq 1/i$.
By continuity of $c$ and $c^\star$ (which follow from Lipschitz continuous gradients and strong convexity of $c$, respectively) and the compactness of the domain of $(x, \lambda)$, there exists a subsequence of this sequence that has a limiting point $(\hat \eta, \hat x, \hat \lambda)$ such that
\begin{equation*}
    c(\hat x) + b(\hat \eta)^T \hat \lambda + c^*(-A(\hat \eta)^T \hat \lambda) = 0.
\end{equation*}
Similarly, by the continuity of $x^*(\eta)$ (see Lemma~\ref{lemma:kkt-jac-invertible}), we know $\| \hat x - x^*(\hat \eta) \| \geq \epsilon$.
This implies that $x^*(\hat \eta)$ is not the unique solution to the dispatch model with parameter $\hat \eta$, which is a contradiction.
Using an identical line of reasoning, we can show the analogous result for $\| \lambda - \lambda^*(\eta)\|_2$, proving the claim.
\end{proof}

\begin{proof}[Proof of Lemma~\ref{lemma:delta-strong-duality}]
    First, lets show that the bound in Lemma~\ref{lemma:epsilon-approx-is-close} is uniform.
    Indeed

    Fix $\epsilon > 0$.
    By Lemma~\ref{lemma:epsilon-approx-is-close}, we can pick $\delta$ such that feasibility in~\eqref{eq:mep-sd} implies
    \begin{equation*}
        \max\left(
        \|x - x^*(\eta) \|_2,
        \| \lambda - \lambda^*(\eta) \|_2 \right) \leq \frac{\epsilon}{2L}
    \end{equation*}
    Now consider some $(\eta, x, \lambda)$ feasible for the $\delta$-strong duality formulation in~\eqref{eq:mep-sd}.
    By Lipschitz continuity,
    \begin{equation*}
        |h(x, \lambda) - h(x^*(\eta), \lambda^*(\eta))|
        \leq L \left(
            \| x - x^*(\eta) \|_2
            + \| \lambda - \lambda^*(\eta) \|_2
        \right)
        \leq \epsilon,
    \end{equation*}
    which proves the claim.
\end{proof}

\subsection{Tight integer relaxation}
\label{appendix:integer-envelope}

We restate the theorem here for convenience.

\begin{theorem}[Tight integer relaxation]
    Consider an integer constrained variable $\alpha \in \{ \alpha^{\min}, \alpha^{\max} \}$ and a box constrained variable $\beta \in [\beta^{\min}, \beta^{\max}]$ (which may or may not have integer constraints as well).
    Then for any $y \in \reals$, the equation $y = \alpha \beta$ holds if and only if
    \begin{equation*}
        y \in \envelope( \alpha \beta ).
    \end{equation*}
    As a consequence, if $\mathcal H = \mathcal H^{\discrete}$, then the feasible sets of~\eqref{eq:mep-sd} and its relaxation~\eqref{eq:mep-relax} are equal, i.e., the relaxation is tight.
\end{theorem}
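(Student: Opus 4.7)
The plan is to prove the iff statement by a short case analysis on the two possible values of $\alpha$, then extract the consequence about the feasible sets of \eqref{eq:mep-sd} and \eqref{eq:mep-relax} by applying that statement to each bilinear product in the strong-duality reformulation.

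The $(\Rightarrow)$ direction is immediate from the defining property of the envelope operator: since $\mathbf{conv}_{\mathcal B}(\phi)$ is a convex under-estimator and $\mathbf{conc}_{\mathcal B}(\phi)$ is a concave over-estimator of $\phi(\alpha, \beta) = \alpha\beta$ on $\mathcal B$, any value $y = \alpha\beta$ with $(\alpha, \beta) \in \mathcal B$ automatically satisfies both envelope inequalities, i.e., $y \in \envelope(\alpha\beta)$.

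For the $(\Leftarrow)$ direction, I would plug each binary value of $\alpha$ into the four McCormick inequalities from Lemma~\ref{lemma:envelope}. At $\alpha = \alpha^{\min}$, the first convex-envelope inequality $y \geq \alpha \beta^{\min} + \alpha^{\min}\beta - \alpha^{\min}\beta^{\min}$ collapses to $y \geq \alpha^{\min}\beta$ (the $\alpha^{\min}\beta^{\min}$ terms cancel), and the first concave-envelope inequality $y \leq \alpha^{\min}\beta + \alpha\beta^{\max} - \alpha^{\min}\beta^{\max}$ collapses to $y \leq \alpha^{\min}\beta$ (the $\alpha^{\min}\beta^{\max}$ terms cancel). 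Together they force $y = \alpha^{\min}\beta = \alpha\beta$. The case $\alpha = \alpha^{\max}$ is entirely symmetric, with the second convex- and concave-envelope inequalities collapsing to $y \geq \alpha^{\max}\beta$ and $y \leq \alpha^{\max}\beta$ respectively, which again force $y = \alpha\beta$.

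For the stated consequence when $\mathcal H = \mathcal H^{\discrete}$, I would apply the iff statement coordinate-wise to each envelope constraint appearing in \eqref{eq:mep-relax}, namely $Y_{ij} \in \envelope(A_{ij}(\eta) x_j)$, $V_{ji} \in \envelope(A_{ij}(\eta)\lambda_i)$, and $\mu_i \in \envelope(\lambda_i b_i(\eta))$. Since $A$ and $b$ are affine in $\eta$ by Assumption~\ref{assume:affine-param} and $\eta \in \mathcal H^{\discrete}$ is binary, each coefficient $A_{ij}(\eta)$ and $b_i(\eta)$ attains its implied box endpoints precisely at the two binary values of the component of $\eta$ on which it depends. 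The first part of the theorem therefore converts every envelope constraint in \eqref{eq:mep-relax} into the corresponding bilinear equality, recovering exactly the constraints of \eqref{eq:mep-sd} and proving the two feasible sets coincide. There is no real obstacle here: the cancellations in the backward direction are entirely routine, and the only subtlety is the minor bookkeeping that each coefficient of $A(\eta)$ and $b(\eta)$ inherits a binary range from $\mathcal H^{\discrete}$, so that the binary hypothesis of the first part applies term-by-term.
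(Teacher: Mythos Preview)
Your proposal is correct and follows essentially the same approach as the paper's own proof: the forward direction by definition of the envelope, and the backward direction by plugging $\alpha = \alpha^{\min}$ (resp.\ $\alpha^{\max}$) into the first and third (resp.\ second and fourth) McCormick inequalities to squeeze $y$ to $\alpha\beta$. Your treatment of the consequence is in fact more explicit than the paper's, which simply states it without further argument.
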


\begin{proof}
    This result may be found in Theorem~3 of~\cite{Al-Khayyal1983-io}, but we prove it here for ease of access.
    By the definition of the convex and concave envelope, we already know that $y = \alpha \beta$ implies $y \in \envelope(\alpha \beta)$.
    So it suffices to show that $y \in \envelope(\alpha \beta)$ implies $y = \alpha \beta$.
    Recall that $y \in \envelope( \alpha \beta )$ if and only if,
    \begin{align*}
        y &\geq \alpha \beta^{\min} + \alpha^{\min} \beta - \alpha^{\min} \beta^{\min}, \\
        y &\geq \alpha^{\max} \beta + \alpha \beta^{\max} - \alpha^{\max} \beta^{\max}, \\
        y &\leq \alpha^{\min} \beta + \alpha \beta^{\max} - \alpha^{\min} \beta^{\max}, \\
        y &\leq \alpha \beta^{\min} + \alpha^{\max} \beta - \alpha^{\max} \beta^{\min}.
    \end{align*}
    Now first consider the case when $\alpha = \alpha^{\min}$.
    Then the first and third equations imply $y \geq \alpha \beta$ and $y \leq \alpha \beta$, respectively.
    So $y = \alpha \beta$.
    If $\alpha = \alpha^{\max}$, the the second and fourth equations imply $y \geq \alpha \beta$ and $y \leq \alpha \beta$, respectively, so $y = \alpha \beta$ in this case as well.
    Since $\alpha$ can only be either $\alpha^{\min}$ or $\alpha^{\max}$ by assumption, it holds that $y = \alpha \beta$ if and only if $y \in \envelope( \alpha \beta )$.
\end{proof}

\subsection{Approximate continuous relaxation}
\label{appendix:cont-envelope}

We restate Theorem~\ref{thm:cont-relax} here for convenience.

\begin{theorem}[Approximate continuous relaxation]
    % Fix some $\eta^0 \in \mathcal H^{\cont}$.
    For all $\epsilon > 0$, there exists some $\alpha > 0$ such that when
    % \begin{equation*}
    %     \max \left(
    %     \| \eta^{\min} - \eta^0 \|_{\infty},
    %     \| \eta^{\max} - \eta^0 \|_{\infty}
    %     \right) \leq \alpha,
    % \end{equation*}
    \begin{equation*}
        \| \eta^{\min} - \eta^{\max} \|_{\infty},
            \leq \alpha,
    \end{equation*}
    then~\eqref{eq:mep-relax} is a $\epsilon$-approximation of~\eqref{eq:mep-sd} over $\eta$.
    % i.e., for all $(\eta, x, \lambda, Y, V, \mu)$ feasible for \eqref{eq:mep-relax}, then there exists some $(x', \lambda')$ such that $(\eta, x', \lambda')$ is feasible for \eqref{eq:mep-sd} and
    % \begin{equation*}
    %     | h(x, \lambda) - h(x', \lambda') | \leq \epsilon.
    % \end{equation*}
\end{theorem}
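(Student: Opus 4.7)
The plan is to take any relaxation-feasible point $(\eta, x_r, \lambda_r, Y, V, \mu)$ of~\eqref{eq:mep-relax}, construct a nearby pair $(\tilde x, \tilde \lambda)$ that is exactly feasible for~\eqref{eq:mep-sd} by convex combination with a Slater point, and invoke Lipschitz continuity of $h$ (which should be added as a hypothesis, in parallel with Lemma~\ref{lemma:delta-strong-duality}) to bound the objective gap. The key intermediate step is to quantify how much the McCormick envelope relaxes the bilinear constraints as a function of $\alpha$.

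First, I would quantify the envelope gap: from Lemma~\ref{lemma:envelope}, for any $y \in \envelope(\alpha \beta)$ over a box of widths $\Delta_\alpha, \Delta_\beta$, a direct calculation gives $|y - \alpha \beta| \leq \tfrac{1}{4}\Delta_\alpha \Delta_\beta$. Applied to the three families of bilinear terms $Y_{ij} \in \envelope(A_{ij}(\eta) x_j)$, $V_{ji} \in \envelope(A_{ij}(\eta) \lambda_i)$, and $\mu_i \in \envelope(\lambda_i b_i(\eta))$, the affine dependence of $A$ and $b$ on $\eta$ (Assumption~\ref{assume:affine-param}) forces the widths of $A_{ij}(\eta)$ and $b_i(\eta)$ to scale linearly with $\| \eta^{\max} - \eta^{\min} \|_\infty = \alpha$, while $x$ and $\lambda$ are confined to compact sets (Assumption~\ref{assume:bounded-domain}). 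Every gap is therefore $O(\alpha)$, and summing over indices together with Lipschitz continuity of $c^\star$ on the bounded image of $-A(\eta)^T \lambda$ (which follows from strong convexity of $c$) yields $A(\eta) x_r \leq b(\eta) + O(\alpha)\ones$ and $c(x_r) \leq u(\lambda_r, \eta) + \delta + O(\alpha)$. So $(x_r, \lambda_r)$ violates the SD constraints by at most $O(\alpha)$.

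Next, I would construct the SD-feasible perturbation. Because $\delta > 0$ and by Assumptions~\ref{assume:feasible} and~\ref{assume:strict-complement}, the SD feasible set has a Slater interior: one can exhibit $(x_s, \lambda_s) \in \mathcal X \times \Lambda$ satisfying $A(\eta) x_s \leq b(\eta) - \sigma\ones$, $\lambda_s \geq \sigma\ones$, and $c(x_s) \leq u(\lambda_s, \eta) + \delta - \sigma$ for some $\sigma > 0$ that is uniform over $\eta \in \mathcal H$ (by compactness of $\mathcal H$ and continuity of $x^*, \lambda^*, A, b, c, c^\star$ in $\eta$, arguing as in Lemma~\ref{lemma:epsilon-approx-is-close}). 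Setting $(\tilde x, \tilde \lambda) = (1 - t)(x_r, \lambda_r) + t(x_s, \lambda_s)$ and using convexity of each SD constraint (the SD inequality $c(x) - u(\lambda, \eta) \leq \delta$ being convex in $(x, \lambda)$ since $c$ is convex and $u$ is concave in $\lambda$), Jensen's inequality gives constraint slack $(1-t) O(\alpha) - t\sigma$, which is $\leq 0$ for the choice $t = O(\alpha/\sigma)$. Thus $(\eta, \tilde x, \tilde \lambda)$ is feasible for~\eqref{eq:mep-sd} and $\| (\tilde x, \tilde \lambda) - (x_r, \lambda_r) \|_2 \leq t \cdot \mathrm{diam}(\mathcal X \times \Lambda) = O(\alpha)$.

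Finally, Lipschitz continuity of $h$ gives $| h(x_r, \lambda_r) - h(\tilde x, \tilde \lambda) | \leq L_h \cdot O(\alpha)$, which we make at most $\epsilon$ by choosing $\alpha$ small. The main obstacle is establishing the uniform Slater radius $\sigma > 0$ over $\eta \in \mathcal H$: this reduces to a compactness argument using continuity of all relevant maps in $\eta$, directly analogous to the continuity-based contradiction used to prove Lemma~\ref{lemma:epsilon-approx-is-close}. Everything else is bookkeeping along the chain $\alpha \mapsto \text{envelope gap} \mapsto \text{perturbation size} \mapsto \text{objective gap}$.
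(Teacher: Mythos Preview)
Your proposal is correct and follows essentially the same route as the paper's proof: bound the McCormick envelope gap by $O(\alpha)$ (the paper's Lemma~\ref{lemma:shrinking-envelope}, with a slightly looser constant than your $\tfrac14$), then use a uniform Slater point and a convex combination to absorb the $O(\alpha)$ constraint violation, and finish with Lipschitz continuity of the objective. The paper packages this into an abstract lemma (Lemma~\ref{lemma:bilinear-relaxation}) and orders the steps slightly differently---it first moves toward the Slater point to create slack and \emph{then} replaces the relaxed bilinear variables by their exact values, whereas you first bound the violation and then convex-combine---but the mechanism is identical. Your observation that Lipschitz continuity of $h$ must be assumed is also correct; the paper imposes it on $f$ in the abstract lemma without restating it in the theorem.
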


To prove this, we first show the relaxed feasible set approximates the original feasible set in the limiting sense.

\begin{lemma}
    \label{lemma:shrinking-envelope}
    Fix $z^{\min}, z^{\max}, \eta^{\min}, \eta^{\max} \in \reals$.
    Then for all $z \in [z^{\min}, z^{\max}]$ and $\eta \in [\eta^{\min}, \eta^{\max}]$, it holds that
    \begin{equation}
        \sup_{y \in \envelope(z \eta)}
        | y - z \eta |
        \leq
        |z^{\max} - z^{\min}| |\eta^{max} - \eta^{\min}|.
    \end{equation}
\end{lemma}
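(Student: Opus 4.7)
The plan is to prove the lemma by direct algebraic manipulation of the four linear inequalities defining the McCormick envelope. Recall that $y \in \envelope(z\eta)$ means $y$ satisfies two lower bounds (from $\mathbf{conv}$) and two upper bounds (from $\mathbf{conc}$), as stated in Lemma~\ref{lemma:envelope} with $\alpha = z$ and $\beta = \eta$.

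First, I would bound $y - z\eta$ from above using either of the concave-envelope inequalities. For example, $y \leq z^{\min}\eta + z\eta^{\max} - z^{\min}\eta^{\max}$ rearranges, after subtracting $z\eta$ from both sides, to
\begin{equation*}
    y - z\eta \leq (z - z^{\min})(\eta^{\max} - \eta).
\end{equation*}
Both factors on the right are nonnegative and are bounded by $z^{\max} - z^{\min}$ and $\eta^{\max} - \eta^{\min}$, respectively, since $z \in [z^{\min}, z^{\max}]$ and $\eta \in [\eta^{\min}, \eta^{\max}]$. This gives $y - z\eta \leq |z^{\max} - z^{\min}||\eta^{\max} - \eta^{\min}|$.

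Next, I would bound $z\eta - y$ from above using either of the convex-envelope inequalities in exactly the same way. For instance, $y \geq z\eta^{\min} + z^{\min}\eta - z^{\min}\eta^{\min}$ rearranges to
\begin{equation*}
    z\eta - y \leq (z - z^{\min})(\eta - \eta^{\min}),
\end{equation*}
which is again a product of two nonnegative factors controlled by the interval widths. Combining this bound with the previous one yields the stated inequality.

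There is no real obstacle here; the proof is a short computation once one recognizes that subtracting $z\eta$ from each McCormick inequality produces a factored expression of the form $(\text{one-sided distance in } z)\cdot(\text{one-sided distance in } \eta)$. The only thing to be careful about is choosing which of the four inequalities to apply on each side so that the resulting factors are manifestly nonnegative; any consistent choice works and gives the same final bound. This lemma is the crucial quantitative ingredient that will then be fed into the proof of Theorem~\ref{thm:cont-relax} to show that feasibility in~\eqref{eq:mep-relax} implies near-feasibility in~\eqref{eq:mep-sd} as the box width shrinks.
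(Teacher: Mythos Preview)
Your proposal is correct and follows essentially the same approach as the paper: both recognize that subtracting $z\eta$ from each McCormick inequality yields a factored product of one-sided distances, which is then bounded by the interval widths. The paper first invokes convexity of $y \mapsto |y - z\eta|$ to reduce to the boundary of the envelope before factoring, but your direct use of the defining inequalities makes that step unnecessary and is slightly cleaner.
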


\begin{proof}
    Fix $z \in [z^{\min}, z^{\max}]$ and $\eta \in [\eta^{\min}, \eta^{\max}]$.
    First note that the function $\psi(y) = |y - z \eta|$ is convex.
    Since the maximum of a convex function over a convex set is attained at an extreme point of the set, it suffices to consider the boundary of $\envelope(z \eta)$.
    At one of the lower convex envelopes we have,
    \begin{equation*}
    \begin{aligned}
        | \eta z^{\min} + \eta^{\min} z - \eta^{\min} z^{\min} - z \eta |
        = | (z - z^{\min}) (\eta - \eta^{\min}) |
        &\leq | z^{\max} - z^{\min} | | \eta^{\max} - \eta^{\min} |.
    \end{aligned}
    \end{equation*}
    The same line of reasoning can be used for the three other inequalities that define $\envelope(z \eta)$, proving the claim.
\end{proof}

By Assumption~\eqref{assume:bounded-domain}, the primal and dual dispatch solutions are assumed to lie in a compact domain.
Thus, we can assume they lie in some finite (albeit possibly large) box.
This implies that when $\| \eta^{\max} - \eta^{\min} \|_\infty$ is sufficiently small, the set $\envelope(z \eta)$ approaches the point $z \eta$.

Next, we state and prove a general result that we will later show includes Theorem~\ref{thm:cont-relax} as a special case.

\begin{lemma}
\label{lemma:bilinear-relaxation}

    Let $f : \reals^D \times \reals^D \rightarrow \reals$ and $g : \reals^D \times \reals^D \rightarrow \reals$ be convex, $L$-Lipschitz continuous functions and let $z^{\min}, z^{\max}, \eta^{\min}, \eta^{\max} \in \reals^D$.
    Assume that there exists $\alpha > 0$ such that for all $\eta \in [\eta^{\min}, \eta^{\max}]$, there exists $z \in [z^{\min}, z^{\max}]$ such that
    \begin{equation}
        \label{local-assume:strict-feasibility}
        g(z, \eta, \diag(\eta) z) < -\alpha.
    \end{equation}
    Now consider the optimization problem
    \begin{equation}
    \tag{O} \label{eq:bilinear-problem}
    \begin{array}{ll}
        \mathrm{minimize} \quad & f(z, \eta) \\[0.25em]
        \mathrm{subject\ to}
        & g(z, \eta, y) \leq 0 \\[0.25em]
        & y = \diag(\eta) z \\[0.25em]
        & \eta^{\min} \leq \eta \leq \eta^{\max} \\[0.25em]
        & z^{\min} \leq z \leq z^{\max},
    \end{array}
    \end{equation}
    where the variables are $z, \eta, y \in \reals^D$.
    Also consider the relaxed problem
    \begin{equation}
    \tag{R} \label{eq:relaxed-bilinear-problem}
    \begin{array}{lll}
        \mathrm{minimize} \quad & f(z, \eta) \\[0.25em]
        \mathrm{subject\ to}
        & g(z, \eta, y) \leq 0 \\[0.25em]
        & y_i \in \envelope(\eta_i z_i), \quad & i = 1, \ldots, D \\[0.25em]
        & \eta^{\min} \leq \eta \leq \eta^{\max} \\[0.25em]
        & z^{\min} \leq z \leq z^{\max},
    \end{array}
    \end{equation}
    where the variables are also $z, \eta, y \in \reals^D$.
    Then for all $z^{\min}, z^{\max} \in \reals^D$ and $\epsilon > 0$, there exists $\delta > 0$ such that,
    \begin{equation*}
        \| \eta^{\max} - \eta^{\min} \|_\infty \leq \delta
    \end{equation*}
    implies that~\eqref{eq:relaxed-bilinear-problem} is an $\epsilon$-approximation of~\eqref{eq:bilinear-problem} over $\eta$, i.e., for all $(z, \eta, y)$ feasible for~\eqref{eq:relaxed-bilinear-problem}, there exists $z'\in \reals^D$ such that $(z', \eta, \diag(\eta) z')$ is feasible for~\eqref{eq:bilinear-problem} and
    \begin{equation*}
        | f(z, \eta) - f(z', \eta) | \leq \epsilon.
    \end{equation*}
\end{lemma}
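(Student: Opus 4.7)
I would show that any feasible point $(z, \eta, y)$ of the relaxed problem~\eqref{eq:relaxed-bilinear-problem} can be retracted into a feasible point of the original problem~\eqref{eq:bilinear-problem} by a perturbation of $z$ whose size vanishes as $\|\eta^{\max} - \eta^{\min}\|_\infty \to 0$. Lipschitz continuity of $f$ then translates this small perturbation into a small change in objective value, giving the $\epsilon$-approximation property.

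The first step is to bound how far the relaxed envelope variable $y$ can be from the true bilinear value $\diag(\eta) z$. Applying Lemma~\ref{lemma:shrinking-envelope} componentwise gives $\|y - \diag(\eta) z\|_2 \leq C \|\eta^{\max} - \eta^{\min}\|_\infty$ with $C = \sqrt{D}\,\|z^{\max} - z^{\min}\|_\infty$. Combining this with Lipschitz continuity of $g$ and the constraint $g(z, \eta, y) \leq 0$ yields
\begin{equation*}
    g(z, \eta, \diag(\eta) z) \leq g(z, \eta, y) + L\|y - \diag(\eta) z\|_2 \leq LC\|\eta^{\max}-\eta^{\min}\|_\infty,
\end{equation*}
so the ``true bilinear'' point $(z, \eta, \diag(\eta) z)$ violates the constraints of~\eqref{eq:bilinear-problem} by an amount that shrinks with the $\eta$-box.

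The second step is to retract $z$ using the uniform Slater condition. Let $\tilde z = \tilde z(\eta) \in [z^{\min}, z^{\max}]$ satisfy $g(\tilde z, \eta, \diag(\eta) \tilde z) < -\alpha$, and set $z' = (1-\theta) z + \theta \tilde z$ for a $\theta \in [0, 1]$ to be chosen. Since $\diag(\eta) z' = (1-\theta)\diag(\eta) z + \theta \diag(\eta) \tilde z$, convexity of $g$ gives
\begin{equation*}
    g(z', \eta, \diag(\eta) z') \leq (1-\theta) LC\|\eta^{\max}-\eta^{\min}\|_\infty - \theta \alpha,
\end{equation*}
which becomes nonpositive at $\theta = LC\|\eta^{\max}-\eta^{\min}\|_\infty / (\alpha + LC\|\eta^{\max}-\eta^{\min}\|_\infty)$. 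Membership $z' \in [z^{\min}, z^{\max}]$ follows because $z'$ is a convex combination of points in the box, so $(z', \eta, \diag(\eta) z')$ is feasible for~\eqref{eq:bilinear-problem}. Finally, Lipschitz continuity of $f$ gives $|f(z',\eta) - f(z,\eta)| \leq L\theta \|\tilde z - z\|_2 \leq LC\theta$, which tends to $0$ as $\|\eta^{\max}-\eta^{\min}\|_\infty \to 0$ and can be made $\leq \epsilon$ by choosing $\delta$ sufficiently small.

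The \textbf{main obstacle} is ensuring that the retraction step $\theta$ can be chosen small \emph{uniformly} in $(z, \eta, y)$. This is exactly what the uniform-in-$\eta$ Slater margin $\alpha$ in~\eqref{local-assume:strict-feasibility} provides: it prevents the denominator defining $\theta$ from collapsing for any $\eta \in [\eta^{\min}, \eta^{\max}]$, which would otherwise force a larger perturbation and a correspondingly larger objective gap. Without this uniform margin, the argument would still go through pointwise but the required $\delta$ could depend badly on $\eta$, destroying the uniform $\epsilon$-approximation claim.
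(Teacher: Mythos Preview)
Your proposal is correct and follows essentially the same strategy as the paper: bound the envelope error $\|y-\diag(\eta)z\|$ via Lemma~\ref{lemma:shrinking-envelope}, then use the uniform Slater point $\tilde z$ and convexity of $g$ to retract a nearly-feasible point back to feasibility by a convex combination whose mixing parameter $\theta$ vanishes with $\|\eta^{\max}-\eta^{\min}\|_\infty$, and finally invoke Lipschitz continuity of $f$.

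The only (minor) difference is the order of operations. The paper first forms the convex combination $\hat v=(1-\theta)v+\theta\tilde v$ \emph{inside the relaxed feasible set} (exploiting convexity of~\eqref{eq:relaxed-bilinear-problem}) to create a margin $g(\hat v)\le -\theta\alpha$, and only then replaces $\hat y$ by $\diag(\eta)\hat z$, using the envelope bound to absorb the resulting error. You instead project first ($y\to\diag(\eta)z$), obtaining a point that violates $g\le 0$ by at most $LC\delta$, and then retract by mixing $z$ with $\tilde z$; because $z\mapsto(z,\eta,\diag(\eta)z)$ is affine for fixed $\eta$, convexity of $g$ applies directly. Your ordering is arguably a bit cleaner since it avoids reasoning about the envelope of the mixed variable $\hat y$, but the two arguments are equivalent in spirit and yield the same quantitative dependence of $\delta$ on $\epsilon$.
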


\begin{proof}
    Without loss of generality, assume $L \geq 1$, $\|z^{\max} - z^{\min}\|_2 \geq 1$, and $\epsilon < 1$.
    Fix $z^{\min}, z^{\max} \in \reals^D$ and $\epsilon > 0$ and suppose
    \begin{equation*}
        \| \eta^{\max} - \eta^{\min} \|_\infty \leq \delta
        = \frac{\alpha \epsilon}{2 L^2 \sqrt{D} \|z^{\max} - z^{\min} \|_\infty  \|z^{\max} - z^{\min} \|_2 }.
    \end{equation*}
    Now consider some point $v = (z, \eta, y)$ feasible for~\eqref{eq:relaxed-bilinear-problem}.
    Our proof will proceeds in three steps.

    \paragraph*{Step 1.}
    By~\eqref{local-assume:strict-feasibility}, there exists $\tilde z \in [z^{\min}, z^{\max}]$ such that $g(\tilde z, \eta, \diag(\eta) \tilde z) \leq -\alpha < 0$.
    Let $\tilde v = (\tilde z, \eta, \diag(\eta) \tilde z)$.
    The point $\tilde v$ is feasible for~\eqref{eq:bilinear-problem}, so it must be feasible for~\eqref{eq:relaxed-bilinear-problem} as well.
    Now choose $\hat v = (1 - \theta) v + \theta \tilde v$. where
    \begin{equation*}
        \theta = \frac{\epsilon}{L \| z^{\max} - z^{\min} \|_2 },
    \end{equation*}
    and note that $\hat \eta = \eta$.
    Since~\eqref{eq:relaxed-bilinear-problem} is convex, $\hat v$ is feasible for~\eqref{eq:relaxed-bilinear-problem} and
    \begin{equation*}
        g(\hat v) \leq (1-\theta) g(v) + \theta g(\tilde v)
        \leq - \theta \alpha.
    \end{equation*}

    \paragraph*{Step 2.}
    Let $v^* = (\hat z, \eta, \diag(\eta) \hat z)$.
    We claim $v^*$ is feasible for~\eqref{eq:bilinear-problem}.
    Of course, this point satisfies the second, third, and fourth constraints of~\eqref{eq:bilinear-problem} by construction.
    But since $\| \eta^{\max} - \eta^{\min} \|_\infty \leq \delta$, then by Lemma~\ref{lemma:shrinking-envelope} it must be that
    \begin{equation*}
        \| \hat v - v^* \|_2
        = \| \hat y - \diag(\eta) \hat z \|_2
        \leq \sqrt{D} \|z^{\max} - z^{\min} \|_\infty \delta.
    \end{equation*}
    So, since $g$ is $L$-Lipschitz, we have
    \begin{equation*}
    \begin{aligned}
        g(v^*)
        &\leq g(\hat v) + L \| \hat v - v^* \|_2 \\
        &\leq - \theta \alpha + L \sqrt{D} \|z^{\max} - z^{\min} \|_\infty \delta \\
        &\leq L \sqrt{D} \|z^{\max} - z^{\min} \|_\infty \delta
        - \frac{\alpha \epsilon}{L \| z^{\max} - z^{\min} \|_2} \\
        &\leq - \frac{\alpha \epsilon}{2 L \| z^{\max} - z^{\min} \|_2} \leq 0,
    \end{aligned}
    \end{equation*}
    which means the second constraint is satisfied, and hence $v^*$ is feasible for~\eqref{eq:bilinear-problem}.

    \paragraph*{Step 3.}
    Lastly, we claim that $|f(z, \eta) - f(z^*, \eta^*)| \leq \epsilon$.
    Recall that $\eta^* = \eta$ and $z^* = \hat z = (1-\theta) z + \theta \tilde z$.
    Since $f$ is $L$-Lipschitz, we have
    \begin{equation*}
    \begin{aligned}
        |f(z, \eta) - f(z^*, \eta^*)|
        &\leq L \| z - \hat z \|_2 \\
        &= L \theta \| z - \tilde z \|_2 \\
        &\leq L \theta \| z^{\max} - z^{\min} \|_2
        \leq \epsilon, \\
    \end{aligned}
    \end{equation*}
    which proves the claim.
\end{proof}

Finally, we can prove Theorem~\ref{thm:cont-relax} by showing it satisfies the conditions of Lemma~\eqref{lemma:bilinear-relaxation}.

\begin{proof}[Proof of Theorem~\ref{thm:cont-relax}]
    This is simply a special case of Lemma~\ref{lemma:bilinear-relaxation} with variable $z = (x, \lambda)$, objective $f(z, \eta) = \gamma^T \eta + h(x, \lambda)$, and constraint
    \begin{equation*}
        g(z, \eta, y) = \begin{bmatrix}
            c(x) - \mu - c^\star(-V \ones) + \delta \\
            Y \ones - b(\eta) \\
        \end{bmatrix},
    \end{equation*}
    (since each of $Y$, $V$, and $\mu$ are linear functions of $y = \diag(\eta) z$).
    The only other step is to show that that there exists $\alpha > 0$ such that $g(z, \eta, y) < -\alpha$.
    Of course, we can simply set $\alpha < \delta$ to show this for the first term in $g$.
    For the second term, this follows directly from Assumption~\ref{assume:feasible}, compactness assumptions on $\eta$ and $x$ (Assumption~\ref{assume:bounded-domain}).
\end{proof}

\newpage
\section{Experiment Details}
\label{appendix:pypsa}

We use the PyPSA-USA~\cite{Tehranchi2023-jo} dataset for all our experiments.
The software is freely available on Github at \url{https://github.com/pypsa/pypsa-usa}.
We specifically use commit \verb|065f864| from December 8th, 2023.
Using that version of the package should exactly reproduce the data as in the paper.
We generate data for the U.S.\ Western Interconnect for the full year of 2019 and use the default settings to cluster the network to the desired size.
The dataset includes fuel costs, capital, renewable generator maximum expansion capacities, a realistic electrical network with susceptances and capacities, battery locations and limits, and other data.

We scale all loads by 150\% in order to simulate future load growth.
Then, we decrease generation and transmission capacities by 20\% to model retirements and outages.
Existing transmission lines and batteries are allowed to be expanded by a factor of 10; expansion costs are given by the dataset itself.
Load is curtailable with a curtailment cost of \$500 per MWh.
Most solutions have no load curtailment.
Renewable expansion capacities are set to be 0.01\% of the PyPSA provided capacity limit.
This is because PyPSA uses technical potentials, i.e., the physical capacity of the land, ignoring zoning and existing construction.
All generators are given a maximum potential capacity of 50 GW for numerical stability reasons.
In practice, this limit is never encountered.
Finally, due to a bug in the PyPSA-USA code, we set battery capacity expansion costs manually.
The battery costs, from the Danish Energy Agency~\cite{Danish-Energy-Agency2018-qn}, are \$151,940/MWh of battery capacity, annualized over 25 years, and \$171,200/MW of maximum power output, annualized over 10 years.
The discount rate is set to 7.0\% and an annual fixed operations and maintenance cost of \$52,845/MW is also added.
Batteries with less than 1 hour of duration are ignored, since these are likely used for ancillary services in practice.
When solving cases for $T$ hours, all capital costs are scaled to $T/8760$ times their annualized cost in order to properly scale capital and operation costs.

\end{document}